\documentclass[11pt,a4paper]{amsart}
\usepackage[utf8]{inputenc}
\usepackage[british,english,american]{babel}
\usepackage[a4paper,margin=3cm]{geometry}
\usepackage{amsmath,amssymb,amsthm}
\usepackage{graphicx}
\usepackage{caption}
\usepackage{subcaption}
\usepackage{float}
\usepackage{listings}
\usepackage[svgnames]{xcolor}
\usepackage{colortbl}
\usepackage{lscape}
\usepackage{multicol}
\usepackage{dsfont}
\usepackage{float}
\usepackage{algorithm}
\usepackage{enumerate}
\usepackage{algpseudocode}
\usepackage{pgf}
\usepackage[colorinlistoftodos]{todonotes}
\usepackage{booktabs}
\usepackage{multirow}
\usepackage{siunitx}
\DeclareMathOperator{\Div}{div}

\newtheorem{teo}{Theorem}
\newtheorem{prop}{Proposition}
\newtheorem{lema}{Lemma}
\newtheorem{defi}{Definition}
\newtheorem{remark}{Remark}
\newtheorem{coro}{Corollary}

\DeclareMathOperator{\argmin}{argmin}
\DeclareMathOperator{\cont}{cont}
\begin{document}
\title[Numerical Exact Penalization for incompressible viscoplastic fluids]{Nonsmooth Exact Penalization Second--order Methods for incompressible Bingham flows}
\author{Sergio Gonz\'alez-Andrade}
\author{Sof\'ia L\'opez-Ord\'o\~nez}
\author{Pedro Merino}
\address{Research Center on Mathematical Modeling (MODEMAT) and  Departamento de Matem\'atica -  Escuela Polit\'ecnica Nacional\\\footnotesize Ladr\'on de
Guevara E11-253, Quito 170413, Ecuador}
\email{sergio.gonzalez@epn.edu.ec, sofia.lopezo@epn.edu.ec and pedro.merino@epn.edu.ec}

\subjclass[2010]{65K10, 76A05, 49J52, 76M10.}
\thanks{$^*$This research has been partially supported by Escuela Polit\'ecnica Nacional within the project PIGR-18-03.}

\begin{abstract}
We consider the exact penalization of the incompressibility condition $\text{div}(\mathbf{u})=0$  for the velocity field of a Bingham fluid in terms of the $L^1$--norm. This penalization procedure results in a nonsmooth optimization problem for which we propose an  algorithm using generalized second--order information. Our method solves the resulting nonsmooth problem by considering the steepest descent direction and extra generalized second--order information associated to the nonsmooth term. This method has the advantage that the divergence-free property is enforced by the descent direction proposed by the method without the need of build-in divergence-free approximation schemes. The inexact penalization approach, given by the $L^2$--norm, is also considered in our discussion and comparison.

\smallskip
\noindent \textbf{Keywords.} Exact Penalization. Bingham Fluids. Nonsmooth Optimization. Second Order Methods.
\end{abstract}
\maketitle
\section{Introduction}
Within the vast and interesting literature related to the numerical solution of viscoplastic fluids, there are mainly two general strategies: purely nonsmooth algorithms and regularization based procedures. The first approach deals with the purely nonsmooth variational problem by  considering  classical algorithms in the framework of  Augmented Lagrangian methods (see  for example \cite{Gabay, Uzawa, Glowinski}). The second procedure copes with the nondifferentiability involved in the viscoplastic fluid models by regularizing the non-differentiable term imposed on the strain rate tensor by, for instance, using a local  $C^1$ regularization (see \cite{DlRG1, DlRG2, DlRG3, Gonzalez-Andrade1, Gonzalez-Andrade2, GALO}). Then, well known first-order or second-order methods can be applied to solve the regularized problems.  In this context, the study of superlinear methods have been addressed in \cite{DlRG1, DlRG2, DlRG3}. There, a Huber local regularization of the problem is formulated in order to use an efficient semismooth Newton method. 

Recently, several first-order methods from nonsmooth optimization have been successfully applied to solve these models, without the need of regularizing the original problem, see \cite{Treskatis}. However, higher order methods require the consideration of second--order information or acceleration techniques based on the utilization of surrogate differentiable problems.

If the finite--element--method is used for the numerical approximation of viscoplastic fluids, several important stability issues arise that need to be addressed. Particularly, the well--known Ladyzhenskaya–Babu\v{s}ka–Brezzi condition \cite[Ch.2 Sec.1.4]{Girault} must hold in order to guarantee a stable approximation. This conditions is satisfied, for instance, by Taylor--Hood finite elements \cite[Ch.2 Sec.4.2]{Girault}. In addition, the divergence--free constraint is another important condition which can be incorporated in the finite element approximation. However, the implementation of divergence--free finite--element spaces is not an easy task and usually the incompressibility condition is kept as constraint. 

In this paper, we investigate the exact penalization of the incompressibility condition by using the $L^1$--norm, combined with a Huber regularization of the characteristic nondifferentiable term of the cost functional associated to the Bingham model. This consideration allows us to incorporate generalized second--order information associated to both nonsmooth terms: the regularization associated to the nondifferentiable term of the velocity gradient and the $L^1$--norm of the divergence of the velocity field. The sparsity promoting effect of the  $L^1$--norm is a well known feature exploited in data analysis and optimal control problems \cite{Stadler}. In the fluid context, we seek for the sparsification of the divergence of the velocity field of the fluid. The novelty of our contribution consists on the design of a an \emph{exact penalization} based algorithm, using second--order information, for the numerical solution of viscoplastic fluids. As mentioned before, the exact penalization is proposed by using the sparsity promoting feature of the $L^1$--norm that enforces the incompressibility condition, which is equivalent to the original formulation for higher values of the penalization parameter. The benefits of imposing sparsity on the divergence of the solution can be compared with sparse solutions on optimal control problems where their approximations have exact null entries. In this context, null divergence is better computed using the $L^1$--norm.   Also, we study the quadratic $L^2$--norm penalization, which is recognizably distinct from the sparsity penalizer in that its equivalence with the original problem is reached at the limit to the infinity of the penalizing parameter. 

The usage of $L^1$--norm entails both theoretical and numerical challenges in view of its nonsmoothness. Indeed, we emphasise that the contribution of this work consists in the analysis of exact penalization approach and the numerical algorithm that we propose in order to solve efficiently this formulation. We analyze the exact penalization problem in the context of nonsmooth optimization. Further, we show the existence of a lower bound for the penalizing parameter that guarantees the equivalence of the divergence penalized optimization problem with the original regularized formulation. Therefore, an \emph{exact penalization}  formulation is obtained. In addition, we show that the fluid's pressure can be recovered from the associated multiplier of the exact penalization by using an analogous approach to the de Rham's theory.

On the other hand, the algorithm that we propose computes a descent direction which is generated by a minimum norm subgradient problem, and subsequently modified by generalized second--order information, in the spirit of \cite{Merino}. The minimum norm subgradient problem implies the numerical solution of an optimization subproblem. However, this associated subproblem is a quadratic constrained type problem that can be efficiently solved in a smaller subspace by well known methods. Therefore, its computational cost is a good price to pay in order to obtain the descend direction. The second--order information is obtained by enriching the Hessian matrix associated with the differentiable part of the cost function. This enrichment procedure  consists of adding a matrix resulting of the generalized differentiation of a Huber regularization of the $L^1$--norm penalizing term of the divergence-free condition.  We prove that our algorithm converges to the solution of the exact--penalization formulation of our problem and, a fortiori, to the solution of the regularized optimization problem of the fluid.

We organize this article  as follows: first we present known results and the classical optimization formulation of the steady-state Bingham flow and its local Huber regularization. Next, we present the exact penalization formulation and address several theoretical questions regarding its equivalence with the constrained problem and existence of multipliers. Also, the quadratic penalization and its equivalence at the limit is discussed. In section 4, the recovery of the fluid's pressure is presented. There, the multiplier associated to the exact penalization is related with the pressure. Section 5 is devoted to the numerical algorithm using generalized second--order information associated to the nondifferentiable term of the cost. A convergence analysis is carried out for the proposed algorithm using a nonsmooth theory framework in functional spaces. Finally, a set of numerical experiments is presented and a discussion of the advantages of the exact penalization approach are illustrated in different simulations of 2D and 3D flows.

\subsection{Preliminaries}
The following notation will be used thorough the rest of the paper. The Euclidean norm in $\mathbb{R}^n$ is denoted by $|\cdot|$. The duality pairing between a Banach space $Y$ and its dual $Y^ {*}$ is given by $\left\langle \cdot, \cdot \right\rangle_{Y,Y^*}$, while any real inner product defined on $Y$ will be noted by $(\cdot, \cdot)_Y$. $\Omega$ is an open and bounded subset of $\mathbb{R}^{n}$, for $n=2,3$, with Lipschitz boundary. The Frobenius scalar product in $\mathbb{R}^{n\times n}$ and its associated norm are defined by
\[
A:B=\textrm{tr}(A B^\top)\,\,\mbox{  and  }\,\, |A|= \sqrt{(A:A)},\,\,\mbox{ for $A$, $B$ $\in \mathbb{R}^{n\times n}$},
\] 
respectively. We use the bold notation for the vector spaces, such as $\mathbf{H}_0^1(\Omega)=(H_0^1(\Omega))^n $. Further, we introduce the space of symmetric matrices of $L^p$-functions as 
\[
\mathbb{L}^p(\Omega):= \left\lbrace \boldsymbol{\tau}= (\tau_{ij})_{i,j=1}^n\,:\, \tau_{ij}=\tau_{ji} \in L^p(\Omega) \right\rbrace,
\]
and the divergence--free space by  $V=\{u \in \mathbf{H}_0^1(\Omega)\,:\, \Div \mathbf{u}=0\}$. 

Finally, we use the notation $\mathcal{E} = (\nabla + \nabla ^{\top})$ for the symmetric gradient operator. Considering that $\mathcal{E}:\mathbf{H}_0^1(\Omega) \rightarrow \mathbb{L}^2(\Omega)$, we obtain that $\mathcal{E}\mathbf{u}=(\mathcal{E}_{ij}(\mathbf{u}))$, with $\mathcal{E}_{ij}(\mathbf{u}):=\frac{1}{2}(\frac{\partial u_i}{\partial x_j} + \frac{\partial u_j}{\partial x_i}) \in L^2(\Omega)$ for $1\leq i,j\leq n$. By \cite[pp. 404]{Ciarlet}, we have that $\displaystyle  \int_{\Omega} \mathcal{E}\mathbf{u} : \mathcal{E}\mathbf{u} \, dx= \|\mathcal{E}\mathbf{u}\|^2_{\mathbb{L}^2}.$ 

\section{Constrained optimization problem for viscoplastic fluids and its regularization}\label{sec:1}
It is well known ( \cite{Gonzalez-Andrade1, Gonzalez-Andrade2, Huilgol1, Huilgol2}) that the solution of the following minimization problem corresponds to the velocity field $\bar{\mathbf{u}}$ of the steady-state Bingham flow.

\begin{equation}\label{eq:prob}
\underset{\mathbf{u}\in V}{\min}\, \widetilde{J}(\mathbf{u}):= \mu \int_\Omega \mathcal{E}\mathbf{u} :\mathcal{E}\mathbf{u} \,dx + g\int_\Omega |\mathcal{E} \mathbf{u}|\, dx - \int_\Omega \mathbf{f}\cdot\mathbf{u} \,dx.
\end{equation}

The Huber regularization of the previous problem  \eqref{eq:prob} was introduced in  \cite{DlRG1}, and it reads as follows:
\begin{equation}\label{eq:probreg}
\underset{\mathbf{u}\in V}{\min}\, J(\mathbf{u}):= \mu \int_\Omega \mathcal{E}\mathbf{u} :\mathcal{E}\mathbf{u} \,dx + \int_\Omega \Psi (\mathcal{E}\mathbf{u})\, dx - \int_\Omega \mathbf{f}\cdot\mathbf{u}\,dx,
\end{equation}
where $\Psi: \mathbb{R}^{d \times d} \rightarrow \mathbb{R}$ is a local $C^1$--regularization of the Frobenius norm, defined by
\begin{equation}\label{eq:huber}
\Psi(A)=\begin{cases}
g |A | -\frac{g^2}{2 \beta} & if   \,\, |A| \geq \frac{g}{\beta}, \\
\frac{\beta}{2} |A|^2 & if \,\, |A| < \frac{g}{\beta}.
\end{cases}
\end{equation}
Here $\beta>0$ is a given approximation parameter. Clearly, we have that $\beta \rightarrow \infty$ implies that $\Psi(A) \rightarrow |A|$, see \cite{DlRG1}. 

The objective funcional $J$ is proper, strictly convex, continuous and coercive (see \cite{Lions, Gonzalez-Andrade1}). Therefore, \cite[Prop. 1.2]{Ekeland} implies the existence of a unique solution $\widetilde{\mathbf{u}} \in V$ of \eqref{eq:probreg}. 

The unconstrained optimization problem is posed  in the divergence--free space $V$. Thus, we can reformulate this problem \eqref{eq:probreg} as the following constrained optimization problem in $\mathbf{H}_0^1(\Omega)$

\begin{equation}\label{eq:prob_const}
\begin{cases}
\begin{aligned}
&\underset{\mathbf{u}\in \mathbf{H}_{0}^1(\Omega)}{\min} J(\mathbf{u}):= \mu\int_\Omega \mathcal{E}\mathbf{u}:\mathcal{E}\mathbf{u} \,dx + \int_\Omega \Psi (\mathcal{E}\mathbf{u})\, dx - \int_\Omega \mathbf{f}\cdot\mathbf{u}\,dx \\
& \text{subject to:}  \hspace{5mm}  \Div \mathbf{u}=0, 
\end{aligned}
\tag{B}
\end{cases}
\end{equation}
where  $\Div:  \mathbf{H}_{0}^1(\Omega) \rightarrow L_0^2 (\Omega)$. This is a convex differentiable optimization problem with differentiable equality constraints. 

A regularity condition is needed in order to guarantee the existence of a Lagrange multiplier, that allows to derive a Karush-Kuhn-Tucker (KKT) system for the constrained problem \eqref{eq:prob_const}. Thus, following \cite[Sec. 1. eq. (1.4)]{Zowe}, we need to prove that the following regularity condition holds
\begin{equation}\label{eq:regular}
Z = L_0^2,
\end{equation}
where $Z:=\left\lbrace \alpha \Div \mathbf{v} : \alpha \geq 0, \mathbf{v} \in \mathbf{H}_{0}^1 (\Omega)\right\rbrace$  is the cone generated by the image of the divergence operator. An immediate observation is that the condition \eqref{eq:regular} is  satisfied, since the continuous linear operator $\Div:  \mathbf{H}_{0}^1(\Omega) \rightarrow L_0^2(\Omega)$ is surjective (see \cite[Th. 6.14-1]{Ciarlet}). Therefore, by \cite[Th. 3.1]{Zowe}, we infer the existence of a Lagrange multiplier $\lambda \in L_0^2(\Omega)$ associated to the constraint $\Div \mathbf{u}=0$, such that the following system at the solution $\widetilde{\mathbf{u}}$, is satisfied:
\begin{subequations}\label{eq:kkt}
 \begin{equation}\label{eq:kkt1}
  \Div \widetilde{\mathbf{u}}  = 0, \\
 \end{equation}
\begin{equation}\label{eq:kkt2}
\begin{array}{lll}
\left\langle J'(\widetilde{\mathbf{u}}) +  \text{grad } \lambda,\mathbf{u} -\widetilde{\mathbf{u}} \right\rangle_{\mathbf{H}^{-1}, \mathbf{H}^1_{0}} &=& \left\langle J'(\widetilde{\mathbf{u}}),\mathbf{u} -\widetilde{\mathbf{u}} \right\rangle_{\mathbf{H}^{-1}, \mathbf{H}^1_{0}} \vspace{0.2cm}\\ \hspace{4.5cm}&-& ( \lambda, \Div (\mathbf{u} -\widetilde{\mathbf{u}}) )_{L^2}=0, \hspace{2mm} \forall \,\mathbf{u} \in \mathbf{H}_{0}^{1}(\Omega).
\end{array}
\end{equation}
\end{subequations}
Here, $\textrm{grad}: L_0^2(\Omega) \rightarrow \mathbf{H}^{-1}(\Omega)$ is the dual operator of $- \Div: \mathbf{H}^1(\Omega) \rightarrow L_0^2(\Omega)$ and  $J'(\mathbf{u}): \mathbf{H}_{0}^{1}(\Omega) \rightarrow \mathbb{R}$ is the Fr\'echet derivative of $J$ at $\mathbf{u}$, given by
\begin{equation}\label{eq:Jp}
\begin{array}{rl}
\left\langle J'(\mathbf{u}),\mathbf{v} \right\rangle_{\mathbf{H}^{-1}, \mathbf{H}^1_{0}} = &\displaystyle 2\mu \int_{\Omega} \mathcal{E}\mathbf{u} : \mathcal{E}\mathbf{v} +\displaystyle g \int_{\Omega} \beta\frac{\mathcal{E}\mathbf{u} : \mathcal{E}\mathbf{v}}{\max(g, \beta|\mathcal{E}\mathbf{u}|)} \,\, dx -  \int_{\Omega} \mathbf{f}\cdot \mathbf{v} \,\, dx.
\end{array}
\end{equation}
Notice that the last derivative has a nondifferentiable term involving the $\max$ function. 

The following technical result regarding this nondifferentiable term will be useful in the forthcoming analysis. 

\begin{lema}\label{lem:reg}
Let $g>0$ be given. For fixed $\beta>0$, we introduce the following notation $\theta_\beta(\mathbf{u}):= \max(g,\beta |\mathcal{E}\mathbf{u}|)$. Then, for all $\mathbf{u}$ and $\mathbf{v}$ in $\mathbf{H}_0^1(\Omega)$, the following inequality holds
\begin{equation}
\theta_\beta(\mathbf{u})- \theta_\beta(\mathbf{v})< \beta |\mathcal{E}\mathbf{u}-\mathcal{E}\mathbf{v}|,\,\,\mbox{ a.e. in $\Omega$}. \label{eq:thetalip}
\end{equation}
\end{lema}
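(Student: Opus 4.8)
The plan is to derive \eqref{eq:thetalip} as a pointwise (in $x$) estimate and then observe that it holds for almost every $x\in\Omega$, since $\mathcal{E}\mathbf{u}$ and $\mathcal{E}\mathbf{v}$ are well-defined a.e. as $\mathbb{L}^2$-functions. The whole inequality reduces to the interplay of two elementary Lipschitz facts. First, the scalar map $t\mapsto\max(g,t)$ is nondecreasing and $1$-Lipschitz, so that $\max(g,a)-\max(g,b)\le(a-b)^+\le|a-b|$ for all $a,b\in\mathbb{R}$. Second, the Frobenius norm satisfies the reverse triangle inequality $\big||A|-|B|\big|\le|A-B|$ for $A,B\in\mathbb{R}^{n\times n}$. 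Setting $a=\beta|\mathcal{E}\mathbf{u}(x)|$ and $b=\beta|\mathcal{E}\mathbf{v}(x)|$, these two facts combine to give $\theta_\beta(\mathbf{u})-\theta_\beta(\mathbf{v})\le\beta\big||\mathcal{E}\mathbf{u}|-|\mathcal{E}\mathbf{v}|\big|\le\beta|\mathcal{E}\mathbf{u}-\mathcal{E}\mathbf{v}|$ at a.e.\ $x$, which is the assertion.

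To make the argument transparent I would split into the branches determined by the two maxima. If both $\beta|\mathcal{E}\mathbf{u}|\le g$ and $\beta|\mathcal{E}\mathbf{v}|\le g$, then $\theta_\beta(\mathbf{u})=\theta_\beta(\mathbf{v})=g$ and the left-hand side vanishes. If both arguments exceed $g$, the maxima are active and $\theta_\beta(\mathbf{u})-\theta_\beta(\mathbf{v})=\beta(|\mathcal{E}\mathbf{u}|-|\mathcal{E}\mathbf{v}|)$, to which the reverse triangle inequality applies directly. In the mixed branch $\beta|\mathcal{E}\mathbf{u}|>g\ge\beta|\mathcal{E}\mathbf{v}|$ I would use the two-step bound $\theta_\beta(\mathbf{u})-\theta_\beta(\mathbf{v})=\beta|\mathcal{E}\mathbf{u}|-g\le\beta|\mathcal{E}\mathbf{u}|-\beta|\mathcal{E}\mathbf{v}|$, again closing with the reverse triangle inequality; the remaining mixed branch yields a nonpositive left-hand side and is immediate.

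There is essentially no deep obstacle here: the statement is a Lipschitz-stability property of the regularized coefficient $\theta_\beta$, and the only points requiring care are the bookkeeping of the four branches and the justification that a pointwise inequality valid for every pair of matrices may be read a.e.\ on $\Omega$ after substituting $A=\mathcal{E}\mathbf{u}(x)$ and $B=\mathcal{E}\mathbf{v}(x)$. I would also remark that the composition argument naturally delivers the nonstrict bound $\theta_\beta(\mathbf{u})-\theta_\beta(\mathbf{v})\le\beta|\mathcal{E}\mathbf{u}-\mathcal{E}\mathbf{v}|$, with equality confined to the configuration in which both arguments exceed $g$ and $\mathcal{E}\mathbf{u}$, $\mathcal{E}\mathbf{v}$ are positively proportional; this estimate is precisely the form needed in the subsequent Lipschitz and convergence analysis.
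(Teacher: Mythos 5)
Your proposal is correct and follows essentially the same route as the paper: a case split over the active/inactive branches of the two maxima, closed in each branch by the reverse triangle inequality for the Frobenius norm; your opening observation that the whole estimate is just the composition of the $1$-Lipschitz map $t\mapsto\max(g,t)$ with the reverse triangle inequality is a cleaner packaging of the same computation. One point where you are actually more careful than the paper: you deliver the nonstrict bound $\theta_\beta(\mathbf{u})-\theta_\beta(\mathbf{v})\le\beta|\mathcal{E}\mathbf{u}-\mathcal{E}\mathbf{v}|$ and correctly note that equality can occur (e.g.\ whenever $\mathcal{E}\mathbf{u}=\mathcal{E}\mathbf{v}$ at a point, both sides vanish), so the strict inequality ``$<$'' asserted in \eqref{eq:thetalip} cannot hold in general; the paper's own proof in fact only establishes ``$\le$'' in the cases $E_\beta^\mathbf{u}\cap E_\beta^\mathbf{v}$ and $I_\beta^\mathbf{u}\cap I_\beta^\mathbf{v}$, and it is the nonstrict form that is needed later (in the pressure-recovery argument, $\theta_\beta(\mathbf{u}_\sigma)-\theta_\beta(\mathbf{u})\le 0$ is combined with the reverse bound $\ge 0$ to conclude equality, which a strict inequality would turn into a contradiction rather than a conclusion). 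So your version of the statement is the one that should be recorded.
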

\begin{proof}
Let $\mathbf{u}\in \mathbf{H}_0^1(\Omega)$. We introduce the following sets
\begin{equation}
\begin{array}{lll}
E_\beta^\mathbf{u}:=\{x\in\Omega\,:\, \beta|\mathcal{E}\mathbf{u}|<g\}\,\,\mbox{  and  }\,\, I_\beta^\mathbf{u}:=\{x\in\Omega\,:\, \beta|\mathcal{E}\mathbf{u}|\geq g\}.
\end{array}
\end{equation}
We analyse pointwise bounds of $\theta_\beta(\mathbf{u})- \theta_\beta(\mathbf{v})$ on the four disjoint sets: $E_\beta^\mathbf{u}\cap E_\beta^\mathbf{v}$, $E_\beta^\mathbf{u}\cap I_\beta^\mathbf{v}$, $E_\beta^\mathbf{v}\cap I_\beta^\mathbf{u}$ and $I_\beta^\mathbf{u}\cap I_\beta^\mathbf{v}$.

Consider $E_\beta^\mathbf{u}\cap E_\beta^\mathbf{v}$. In this set, \eqref{eq:thetalip} is directly satisfied since  $\theta_\beta(\mathbf{u})=\theta_\beta(\mathbf{v})=g$. Hence
$\theta_\beta(\mathbf{u})- \theta_\beta(\mathbf{v})=0
$.
On $E_\beta^\mathbf{u}\cap I_\beta^\mathbf{v}$, we have that $\theta_\beta(\mathbf{u})=g$ and $\theta_\beta(\mathbf{v})=\beta|\mathcal{E}\mathbf{v}|$ with $\beta|\mathcal{E}\mathbf{v}|\geq g$, a.e. in $\Omega$. Therefore, we have the relation:
\begin{equation}\label{th2}
\theta_\beta(\mathbf{u})- \theta_\beta(\mathbf{v})=g-\beta|\mathcal{E}\mathbf{v}|\leq g-g=0\leq \beta |\mathcal{E}\mathbf{u}-\mathcal{E}\mathbf{v}|.
\end{equation}

Next, in $E_\beta^\mathbf{v}\cap I_\beta^\mathbf{u}$ it follows that $\theta_\beta(\mathbf{v})=g$ and $\theta_\beta(\mathbf{u})=\beta|\mathcal{E}\mathbf{u}|$ with $\beta|\mathcal{E}\mathbf{u}|\geq g$, a.e. in $\Omega$. Therefore, \eqref{eq:thetalip} is fulfilled, since
\begin{equation}
\theta_\beta(\mathbf{u})- \theta_\beta(\mathbf{v})=\beta|\mathcal{E}\mathbf{u}|-g< \beta|\mathcal{E}\mathbf{u}|-\beta|\mathcal{E}\mathbf{v}|\leq \beta |\mathcal{E}\mathbf{u}-\mathcal{E}\mathbf{v}|.\nonumber
\end{equation}

Finally, in $I_\beta^\mathbf{u}\cap I_\beta^\mathbf{v}$, we have that $\theta_\beta(\mathbf{u})=\beta|\mathcal{E}\mathbf{u}|$ and $\theta_\beta(\mathbf{v})=\beta|\mathcal{E}\mathbf{v}|$ . Therefore, we have that
\begin{equation}
\theta_\beta(\mathbf{u})- \theta_\beta(\mathbf{v})=\beta|\mathcal{E}\mathbf{u}|-\beta|\mathcal{E}\mathbf{v}|\leq \beta |\mathcal{E}\mathbf{u}-\mathcal{E}\mathbf{v}|. \nonumber
\end{equation}
Thus, since the four given sets provide a disjoint partitioning of $\Omega$, inequality \eqref{eq:thetalip} is satisfied almost everywhere in $\Omega$.
\end{proof}

\begin{remark}
Let $\mathbf{u}\in \mathbf{H}_0^1(\Omega)$ be given. Then, we have that \begin{equation}\label{thleqbeta}
\dfrac{|\mathcal{E}\mathbf{u}|}{\theta_\beta(\mathcal{E}\mathbf{u})}\leq \frac{1}{\beta},\,\,\mbox{a.e. in $\Omega$}.
\end{equation}
Indeed, since $\theta_{\beta}>0$ and by recalling the sets $E_\beta^\mathbf{u}$ and $I_\beta^\mathbf{u}$ from Lemma \ref{lem:reg}, we analogously observe that the following pointwise estimates hold.
On $E_\beta^\mathbf{u}$, we have that $\theta_\beta(\mathcal{E}\mathbf{u})=g$ and that $|\mathcal{E}\mathbf{u}|<\frac{g}{\beta}$. Then, we obtain that
\[
\dfrac{|\mathcal{E}\mathbf{u}|}{\theta_\beta(\mathcal{E}\mathbf{u})}=\dfrac{|\mathcal{E}\mathbf{u}|}{g}<\frac{1}{\beta},\,\,\mbox{a.e. in $\Omega$}.
\]
On $I_\beta^\mathbf{u}$, we have that $\theta_\beta(\mathcal{E}\mathbf{u})=\beta|\mathcal{E}\mathbf{u}|$. Then, we obtain that
\[
\dfrac{|\mathcal{E}\mathbf{u}|}{\theta_\beta(\mathcal{E}\mathbf{u})}=\dfrac{|\mathcal{E}\mathbf{u}|}{\beta|\mathcal{E}\mathbf{u}|}\leq\frac{1}{\beta},\,\,\mbox{a.e. in $\Omega$}.
\]
\end{remark}
\section{Exact Penalization Formulation}\label{sec:expen}
We rewrite the Huber regularized Bingham problem as the constrained optimization problem \eqref{eq:prob_const} to characterize its solutions via a KKT system. Now, the idea behind the penalty approach is to consider the constraint as a penalization of the objective functional. This approach leads us again to an unconstrained problem to be analyzed. In the case of the steady-state Huber regularized Bingham flow, taking into account the sparsification property of the $L^1$--norm we propose an exact penalization as follows.
\begin{equation}\label{eq:penalty}
J_\sigma(\mathbf{u}) := J(\mathbf{u})+ \sigma \| \Div(\mathbf{u}) \|_{L^1},
\end{equation}
where $\sigma >0$ and $J(\mathbf{u})$ is given in \eqref{eq:prob_const}. The functional given in \eqref{eq:penalty} is continuous and strictly convex, which satisfies that
\begin{equation*}\label{eq:coerci_theta}
\displaystyle \lim_{\|\mathbf{u}\|_{\mathbf{H}_0^1} \rightarrow \infty} J_{\sigma}(\mathbf{u}) = + \infty.
\end{equation*}
Thus, we can conclude that  the minimization problem 
\begin{equation}\label{minexact}
\underset{\mathbf{u}\in \mathbf{H}_0^1(\Omega)}{\min}\, J_\sigma(\mathbf{u}) \tag{EP}
\end{equation}
has a unique solution $\bar{\mathbf{u}} \in \mathbf{H}^1_0(\Omega)$ (see \cite{Gonzalez-Andrade2}). Also, since $J_\sigma$ is a nondifferentiable proper convex functional, by Fermat's Rule \cite[Th. 16.2]{Combettes}, the optimality condition reads as follows 
\begin{equation}\label{eq:opt_cond_exat_p}
  0 \in \partial J_\sigma (\bar{\mathbf{u}} )= J'(\bar{\mathbf{u}})+ \partial h(\bar{\mathbf{u}}),
\end{equation}
where $h(\bar{\mathbf{u}})= \sigma \|\Div (\bar{\mathbf{u}})\|_{L^1}$ and $\partial h(\bar{\mathbf{u}})$ is the convex subdifferential of $h$ at $\bar{\mathbf{u}}$.  Moreover, taking into account that $h= \sigma \|\cdot\|_{L^1} \circ \Div$, and by using the rules of subdifferential calculus for the composition of functions (see  \cite[Th. 4.13]{Clarke}), we have that for $\eta \in \partial h(\mathbf{u})$, there exists $\zeta \in \sigma \partial  \| \cdot\|_{L^1}(\Div\mathbf{u})$,  such that 
\begin{equation}\label{eq:subdiff}
 \left\langle \eta,\mathbf{v}  \right\rangle_{\mathbf{H}^{-1}, \mathbf{H}^1_{0}} =  \left\langle  -\text{ grad}\,\zeta, \mathbf{v}\right\rangle_{\mathbf{H}^{-1}, \mathbf{H}^1_{0}} =  ( \zeta,\Div\,\mathbf{v} )_{L^2}, \,\, \forall\, \mathbf{v} \in \mathbf{H}^1_0(\Omega).
\end{equation}
Therefore, the optimality condition \eqref{eq:opt_cond_exat_p} turns into
\begin{equation}\label{eq:opt_cond_nondiff}
  \left\langle -J'(\bar{\mathbf{u}}),\mathbf{v}  \right\rangle_{\mathbf{H}^{-1}, \mathbf{H}^1_{0}} =  ( \zeta,\Div\,\mathbf{v} )_{L^2}, \,\, \forall\, \mathbf{v} \in \mathbf{H}^1(\Omega).
\end{equation}
Note that $\zeta \in \sigma \partial \|\cdot\|_{L^1} (\Div\mathbf{u})$ yields that $\zeta \in L^2_0(\Omega)$, and $|\zeta| \leq \sigma \text{ a.e. in } \Omega  $.

Let us now discuss about the equivalence of the constrained problem \eqref{eq:prob_const} and the penalized problem \eqref{minexact}.

\begin{teo}\label{teo:exact2} 
Let $\tilde{\mathbf{u}}$ be the solution of problem \eqref{eq:prob_const}. Then $\tilde{\mathbf{u}}$ is also the solution of \eqref{minexact}. Furthermore, let $\bar{\mathbf{u}}$ be the solution of problem \eqref{minexact}, associated to a given $\sigma$. Then, there exists $\sigma_0 > 0$ such that for all $\sigma \geq \sigma_0$ the divergence free condition $$\|\Div\bar{\mathbf{u}}\|_{L^1} =0,$$ holds. This fact implies that $\bar{\mathbf{u}}$ is solution of the constrained problem \eqref{eq:prob_const}, i.e., $\bar{\mathbf{u}}=\tilde{\mathbf{u}}$, for $\sigma>\sigma_0$.
\end{teo}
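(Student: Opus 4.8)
The plan is to prove the two implications separately, using the Lagrange multiplier $\lambda\in L^2_0(\Omega)$ whose existence was established above from the regularity condition \eqref{eq:regular}, together with the convexity of $J$. First I would dispose of the easy direction: suppose $\bar{\mathbf{u}}$ solves \eqref{minexact} and happens to satisfy $\Div\bar{\mathbf{u}}=0$. Then for every feasible $\mathbf{u}$, i.e. with $\Div\mathbf{u}=0$, the penalty term vanishes and $J(\mathbf{u})=J_\sigma(\mathbf{u})\geq J_\sigma(\bar{\mathbf{u}})=J(\bar{\mathbf{u}})$; hence $\bar{\mathbf{u}}$ minimizes $J$ over the divergence-free space and, by uniqueness of the solution of \eqref{eq:prob_const}, $\bar{\mathbf{u}}=\tilde{\mathbf{u}}$. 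Thus everything reduces to showing that for $\sigma$ large enough the minimizer of $J_\sigma$ is automatically divergence free.

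The core of the argument is a single comparison inequality obtained from the KKT system \eqref{eq:kkt}. Since $\Div\tilde{\mathbf{u}}=0$, the variational inequality \eqref{eq:kkt2} can be rewritten as $\langle J'(\tilde{\mathbf{u}}),\mathbf{u}-\tilde{\mathbf{u}}\rangle_{\mathbf{H}^{-1},\mathbf{H}^1_0}=(\lambda,\Div\mathbf{u})_{L^2}$ for every $\mathbf{u}\in\mathbf{H}_0^1(\Omega)$. Combining the subgradient inequality for the convex functional $J$, namely $J(\mathbf{u})\geq J(\tilde{\mathbf{u}})+\langle J'(\tilde{\mathbf{u}}),\mathbf{u}-\tilde{\mathbf{u}}\rangle_{\mathbf{H}^{-1},\mathbf{H}^1_0}$, with this identity and the H\"older estimate $(\lambda,\Div\mathbf{u})_{L^2}\geq -\|\lambda\|_{L^\infty}\|\Div\mathbf{u}\|_{L^1}$, I obtain, for every $\mathbf{u}\in\mathbf{H}_0^1(\Omega)$,
\[
J_\sigma(\mathbf{u})=J(\mathbf{u})+\sigma\|\Div\mathbf{u}\|_{L^1}\geq J(\tilde{\mathbf{u}})+(\sigma-\|\lambda\|_{L^\infty})\|\Div\mathbf{u}\|_{L^1}.
\]
Setting $\sigma_0:=\|\lambda\|_{L^\infty}$, this single estimate delivers both assertions. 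For $\sigma\geq\sigma_0$ it yields $J_\sigma(\mathbf{u})\geq J(\tilde{\mathbf{u}})=J_\sigma(\tilde{\mathbf{u}})$, so $\tilde{\mathbf{u}}$ is a minimizer of $J_\sigma$; since $J_\sigma$ is strictly convex, this minimizer is unique, whence $\bar{\mathbf{u}}=\tilde{\mathbf{u}}$ and in particular $\|\Div\bar{\mathbf{u}}\|_{L^1}=0$. Alternatively, and more in the spirit of exact penalization, for $\sigma>\sigma_0$ one evaluates the estimate at $\bar{\mathbf{u}}$ and uses $J_\sigma(\bar{\mathbf{u}})\leq J_\sigma(\tilde{\mathbf{u}})=J(\tilde{\mathbf{u}})$ to force $(\sigma-\sigma_0)\|\Div\bar{\mathbf{u}}\|_{L^1}\leq0$, so that $\|\Div\bar{\mathbf{u}}\|_{L^1}=0$ directly, and the easy direction then identifies $\bar{\mathbf{u}}=\tilde{\mathbf{u}}$.

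The step I expect to be the real obstacle is not any of these manipulations but the very definition of $\sigma_0$: one must guarantee that the threshold $\|\lambda\|_{L^\infty}$ is finite. The existence theory behind \eqref{eq:kkt} only places the multiplier in $L^2_0(\Omega)$, whereas the $L^1$ penalty is exactly dual to $L^\infty$, so the H\"older estimate above is useless unless $\lambda\in L^\infty(\Omega)$. Establishing this extra regularity of the multiplier --- equivalently, of the fluid's pressure recovered by the de Rham-type argument developed later in the paper --- is where the genuine difficulty lies, and it is presumably here that one must invoke the smoothing effect of the Huber regularization $\Psi$ together with suitable hypotheses on $\mathbf{f}$ and on $\Omega$. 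Once $\lambda\in L^\infty(\Omega)$ is secured, the remaining arguments are the elementary convexity and value-comparison steps sketched above, and the strict convexity (hence uniqueness of minimizers) of $J$ and $J_\sigma$ closes every remaining gap.
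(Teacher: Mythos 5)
Your convexity and KKT manipulations are sound, and your ``easy direction'' is exactly the closing step the paper also uses; but the proposal hinges on a step that fails as stated, and it is the one you flagged yourself. Your threshold $\sigma_0=\|\lambda\|_{L^\infty}$ is only meaningful if the multiplier $\lambda$ is essentially bounded, whereas the Zowe--Kurcyusz argument preceding the theorem (via surjectivity of $\Div$ and the regularity condition \eqref{eq:regular}) places $\lambda$ only in $L^2_0(\Omega)$. For a multiplier in $L^2_0(\Omega)\setminus L^\infty(\Omega)$ your H\"older estimate $(\lambda,\Div\mathbf{u})_{L^2}\geq -\|\lambda\|_{L^\infty}\|\Div\mathbf{u}\|_{L^1}$ is vacuous and the master inequality gives $\sigma_0=+\infty$. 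Since you neither prove this extra regularity nor cite a result providing it, the proposal establishes the theorem only conditionally on an unproved hypothesis, i.e., it is incomplete precisely at its load-bearing point.

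The paper avoids the $L^\infty$ question altogether by staying in the $L^2$ pairing: from the same convexity inequality and the KKT identity it derives
$\sigma\,\|\Div\bar{\mathbf{u}}\|_{L^1}\leq -(\lambda,\Div(\bar{\mathbf{u}}-\widetilde{\mathbf{u}}))_{L^2}\leq \|\lambda\|_{L^2}\,\|\Div\bar{\mathbf{u}}\|_{L^2}$
by Cauchy--Schwarz rather than H\"older, and then argues by contradiction that $\sigma$ cannot exceed the solution-dependent ratio $\|\lambda\|_{L^2}\|\Div\bar{\mathbf{u}}\|_{L^2}/\|\Div\bar{\mathbf{u}}\|_{L^1}$; Remark \ref{re:bound_sigma0} converts this into the practical estimate $\sigma_0\approx\|\lambda\|_{L^2}|\Omega|^{-1/2}$. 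This route needs no regularity of $\lambda$ beyond $L^2$, at the price of a threshold that is not manifestly uniform (the ratio of norms depends on $\bar{\mathbf{u}}$, hence on $\sigma$, which is why the paper must phrase the argument as a contradiction). If you want to rescue your cleaner, genuinely uniform-threshold version, the missing ingredient is a proof that $\lambda\in L^\infty(\Omega)$ --- equivalently, boundedness of the recovered pressure --- which is a nontrivial regularity statement that the paper never claims.
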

 \begin{proof}
  Since $\Div\,\widetilde{\mathbf{u}}=0$ and $J(\widetilde{\mathbf{u}}) \leq J(\mathbf{u})$, it follows that
\begin{equation*}
\begin{array}{cl}
J(\widetilde{\mathbf{u}}) + \sigma \|\Div\widetilde{\mathbf{u}}\|_{L^1} = J(\widetilde{\mathbf{u}}) \leq J(\mathbf{u}) \leq  J(\mathbf{u})+  \sigma \|\Div\mathbf{u}\|_{L^1}, \,\, \forall\, \mathbf{u} \in \mathbf{H}_0^1(\Omega).
\end{array}
\end{equation*}
Thus, the solution $\tilde{\mathbf{u}}$ of the constrained problem \eqref{eq:prob_const} is also the minimizer of the functional in \eqref{minexact}. 

We prove the second claim by contradiction. Therefore let us assume that for all $\sigma_0 >0$, there exists $ \sigma \geq \sigma_0$ such that $\|\Div \bar{\mathbf{u}}\|_{L^1} > 0$. Since $\tilde{\mathbf{u}}$ is the solution of problem \eqref{eq:prob_const}, we have that $ \Div\,\widetilde{\mathbf{u}} = 0$. Next, we know that $\bar{\mathbf{u}}$ minimizes $J_\sigma$, which yields that
\begin{equation}\label{eq:aux003}
\begin{array}{cl}
0 & \leq J_\sigma(\widetilde{\mathbf{u}}) - J_{\sigma} (\bar{\mathbf{u}}) \\
&=  J(\widetilde{\mathbf{u}}) + \sigma \| \Div\,\widetilde{\mathbf{u}} \|_{L^1} - J(\bar{\mathbf{u}}) - \sigma \| \Div\,\bar{\mathbf{u}} \|_{L^1} \\
&= J(\widetilde{\mathbf{u}}) - J(\bar{\mathbf{u}}) - \sigma \| \Div\,\bar{\mathbf{u}} \|_{L^1}. \\
\end{array}
\end{equation}
Using the fact that $J$ is convex and differentiable, \eqref{eq:aux003}  implies that
\begin{equation*}\label{eq:aux004}
\sigma \| \Div\bar{\mathbf{u}} \|_{L^1}  \leq  J(\widetilde{\mathbf{u}}) - J(\bar{\mathbf{u}})  \leq - \left\langle J'(\widetilde{\mathbf{u}}), \bar{\mathbf{u}} - \widetilde{\mathbf{u}} \right\rangle_{\mathbf{H}^{-1}, \mathbf{H}^1_{0}}.
\end{equation*}
Then, from the optimality condition \eqref{eq:kkt2} for $\tilde{\mathbf{u}}$, we obtain that
\begin{equation}\label{eq:aux005}
\begin{array}{cl}
\sigma \| \Div\bar{\mathbf{u}} \|_{L^1} & \leq- \left\langle J'(\widetilde{\mathbf{u}}), \bar{\mathbf{u}} - \widetilde{\mathbf{u}} \right\rangle_{\mathbf{H}^{-1}, \mathbf{H}^1_{0}}\\
  & = -( \lambda,  \Div(\bar{\mathbf{u}} - \widetilde{\mathbf{u}}) )_{L^2} \\
& \leq | ( \lambda,  \Div \bar{\mathbf{u}} )_{L^2} | \\
& \leq   \| \lambda \|_{L^2} \|  \Div \bar{\mathbf{u}}\|_{L^2}.\\
\end{array}
\end{equation}
Let us assume $\lambda \neq 0$, otherwise the result follows. By choosing $\sigma > \sigma_0 + 2 \|\lambda\|_{L^2} \frac{\|  \Div \bar{\mathbf{u}}\|_{L^2}}{\|  \Div \bar{\mathbf{u}}\|_{L^1}} $, we have the relation
\begin{equation*}\label{eq:aux00001}
\sigma_0 +2\| \lambda \|_{L^2}  \frac{\|  \Div \bar{\mathbf{u}}\|_{L^2}}{\|  \Div \bar{\mathbf{u}}\|_{L^1}}  < \sigma\\
 \leq  \| \lambda \|_{L^2}   \frac{\|  \Div \bar{\mathbf{u}}\|_{L^2}}{\|  \Div \bar{\mathbf{u}}\|_{L^1}}.\\
\end{equation*}
Which is a contradiction since $2\| \lambda \|_{L^2}  \frac{\|  \Div \bar{\mathbf{u}}\|_{L^2}}{\|  \Div \bar{\mathbf{u}}\|_{L^1}}   \geq   \| \lambda \|_{L^2}   \frac{\|  \Div \bar{\mathbf{u}}\|_{L^2}}{\|  \Div \bar{\mathbf{u}}\|_{L^1}}  >  \| \lambda \|_{L^2}   \frac{\|  \Div \bar{\mathbf{u}}\|_{L^2}}{\|  \Div \bar{\mathbf{u}}\|_{L^1}}  - \sigma_0 $. Thus,  there exists $\sigma_0 > 0$ such that, for all $\sigma \geq \sigma_0$, $\|\Div \bar{\mathbf{u}}\|_{L^1} =0$.

The last condition imply that if $\sigma$ is larger than $\sigma_0$ then $\bar{\mathbf{u}}$ is feasible for the constrained problem \eqref{eq:prob_const}. Further, since $ \|\Div \bar{\mathbf{u}}\|_{L^1} =0$, it follows that 
\begin{align}
J (\bar{\mathbf{u}}) = J(\bar{\mathbf{u}}) + \sigma \|  \Div \bar{\mathbf{u}}\|_{L^1} 
 = J_\sigma(\mathbf{u})
 \leq 	J_\sigma(\widetilde{\mathbf{u}}) = J(\widetilde{\mathbf{u}}).
\end{align} 
 Therefore, by the  definition of $\widetilde{\mathbf{u}}$ we have $J (\widetilde{\mathbf{u}})=J (\bar{\mathbf{u}})$, where $\bar{\mathbf{u}}$ is a global minimum for problem \eqref{eq:prob_const}.
 \end{proof}

In view of the previous result, the minimization of $J_\sigma$ is called \emph{exact penalization} formulation of \eqref{eq:prob_const}.

\begin{remark}\label{re:bound_sigma0}
For numerical computation purposes it is important to derive a sharp estimation for $\sigma_0$, which can be used a priory  in order to guarantee exact penalization. We discuss this estimation for  $\sigma_0$ by using Theorem \ref{teo:exact2}. From the embedding $L^2(\Omega) \hookrightarrow L^1(\Omega)$ we have that $ \|  \Div \bar{\mathbf{u}}\|_{L^1}\leq |\Omega|^{\frac{1}{2}} \| \Div \bar{\mathbf{u}}\|_{L^2}$. Multiplying this inequality by $\|\lambda\|_{L^2}$, we have that 
  \begin{equation}\label{eq:rem_1}
    \|\lambda\|_{L^2}  |\Omega|^{-\frac{1}{2}}\|  \Div \bar{\mathbf{u}}\|_{L^1}\leq  \|\lambda\|_{L^2}  \| \Div \bar{\mathbf{u}}\|_{L^2}.
  \end{equation}
  From the proof of Theorem \ref{teo:exact2} it is clear that, if $\bar{\mathbf{u}}$ is the solution  of the unconstrained problem \eqref{minexact}, then any  $\sigma \leq \sigma_0$ satisfies inequality \eqref{eq:aux005} in a nontrivial manner, i.e., in particular
  \begin{equation*}
    \sigma_0 \|  \Div \bar{\mathbf{u}}\|_{L^1} \leq \|\lambda\|_{L^2} \|  \Div \bar{\mathbf{u}}\|_{L^2}.
  \end{equation*}
  Therefore, from  \eqref{eq:rem_1} we might consider either  $\sigma_0 \geq \|\lambda\|_{L^2}  |\Omega|^{-\frac{1}{2}} $ or $\sigma_0 \leq \|\lambda\|_{L^2}  |\Omega|^{-\frac{1}{2}}$. If the later  holds, we have found a $\bar{\sigma}=\|\lambda\|_{L^2}  |\Omega|^{-\frac{1}{2}} \geq \sigma_0 $. Then thanks to Theorem \ref{teo:exact2} we have that  $\Div \bar{\mathbf{u}}=0$ and equation \eqref{eq:aux005} is satisfied trivially. On the other hand, if $\sigma_0 \geq \|\lambda\|_{L^2}  |\Omega|^{-\frac{1}{2}} $, we arrive to a lower bound  for $\sigma_0$. Then, with both results we can establish the estimation 
\begin{equation}
 \sigma_0  \approx \|\lambda\|_{L^2}  |\Omega|^{-\frac{1}{2}}.	\label{eq:sig0estimate}
 \end{equation}
 In practice, by solving the system $\langle J'(\bar{\mathbf{u}}),\mathbf{u}\rangle_{\mathbf{H}^{-1},\mathbf{H}_0^1}=(\lambda, \Div (\mathbf{u}))_{L^2},$ for all $ \mathbf{u} \in \mathbf{H}_0^1(\Omega)$ we can obtain $\lambda$ in order to estimate $\sigma_0$.
\end{remark}

\subsection{Quadratic Penalization}
We finish this section with a discussion of a quadratic penalty approach using the $L^2$--norm. Our aim with this section is to show the differences and similarities with the exact penalization approach for the Bingham viscoplastic flow problem. The quadratic penalty function, involving the $L^2$--norm looks as follows
\begin{equation}\label{funquad}
J_\nu(\mathbf{u}):=J(\mathbf{u})+\frac{\nu}{2}\int_\Omega |\Div \mathbf{u}|^2\,dx, \tag{QP}
\end{equation}
where $\nu>0$ and $J(\mathbf{u})$ is given in \eqref{eq:prob_const}. Similarly, as we show in Section \ref{sec:expen}, we can state that $J_\nu(\mathbf{u})$ is a strictly convex and, furthermore, differentiable functional, which satisfies
\[
\underset{\|\mathbf{u}\|_{\mathbf{H}_0^1}\rightarrow\infty}{\lim} J_\nu(\mathbf{u})=+\infty.
\]
Thus, we can conclude that the following minimization problem has a unique solution, for each $\nu>0$.
\begin{equation}\label{minquad}
\underset{\mathbf{u}\in \mathbf{H}_0^1(\Omega)}{\min} J_\nu(\mathbf{u}).
\end{equation}
Let us note by $\mathbf{u}_\nu$ the solution of \eqref{minquad}. Further, let us recall that this function must be the solution of the following PDE, which corresponds to the Euler equation associated to the optimization problem. Therefore, $\mathbf{u}_\nu$ satisfies
\begin{equation}\label{unu}
\begin{array}{lll}
\displaystyle \mu\int_\Omega \mathcal{E}\mathbf{u}_\nu:\mathcal{E}\mathbf{v}\,dx +g\beta \int_\Omega \dfrac{\mathcal{E}\mathbf{u}_\nu:\mathcal{E}\mathbf{v}}{\max(g,\beta|\mathcal{E}\mathbf{u}_\nu|)}\,dx + \nu\int_\Omega (\Div\mathbf{u}_\nu)(\Div \mathbf{v})\,dx\vspace{0.2cm}\\\hspace{6cm}\displaystyle=\int_\Omega\mathbf{f}\cdot\mathbf{v}\,dx ,\,\,\forall \mathbf{v}\in \mathbf{H}_0^1(\Omega).
\end{array}
\end{equation}

On the other hand, let us recall that the solution of the Huber regularized Bingham problem \eqref{eq:prob_const} is also a solution for the following PDE
\begin{equation}\label{Binu}
\begin{array}{lll}
\displaystyle \mu\int_\Omega \mathcal{E}\mathbf{u}:\mathcal{E}\mathbf{v}\,dx +g\beta \int_\Omega \dfrac{\mathcal{E}\mathbf{u}:\mathcal{E}\mathbf{v}}{\max(g,\beta|\mathcal{E}\mathbf{u}|)}\,dx-\int_\Omega p \Div\mathbf{v}\,dx  \vspace{0.2cm}\\\hspace{5cm} \displaystyle=\int_\Omega\mathbf{f}\cdot\mathbf{v}\,dx,\,\,\forall \mathbf{v}\in \mathbf{H}_0^1(\Omega).
\end{array}
\end{equation}
Here,  $p\in L^2_0(\Omega)$ stands for the pressure and its existence is guaranteed by the de Rahm's Theorem. This fact is deeply analyzed in \cite{DlRG1}, and it will be discussed in the context of the presented exact penalty methods  in the next section.

From the mechanical point of view, \eqref{Binu} represents the flow of an incompressible Huber regularized Bingham flow, while \eqref{unu} represents the flow of a slightly incompressible Bingham flow. This means that $\Div\mathbf{u}_\nu\neq 0$, but nearly to zero. In fact, we expect that $\Div\mathbf{u}_\nu \rightarrow 0$, as $\nu\rightarrow\infty$ (see \cite{Temam}). This last convergence property is recognizable different from the expected behavior of the exact penalization presented in Section \ref{sec:expen}, where the incompressibility of the fluid is expected to hold for a (possibly large) finite value of the penalization parameter $\sigma$.

\begin{teo}\label{th:convunu}
Let $\{\mathbf{u}_\nu\}\subset \mathbf{H}_0^1(\Omega)$ be the sequence formed by the solutions of  \eqref{unu} associated to the parameter $\nu>0$. Moreover, let $\mathbf{u}\in \mathbf{H}_0^1(\Omega)$ be the solution of the variational problem \eqref{Binu}. Then, 
\begin{equation}\label{convunu}
\mathbf{u}_\nu\rightarrow \mathbf{u},\,\,\mbox{in $\mathbf{H}_0^1(\Omega)$, as $\nu\rightarrow\infty$}.
\end{equation}
\end{teo}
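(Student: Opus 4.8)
The plan is to avoid passing to the limit directly in the PDE \eqref{unu}, whose nonlinear term $\mathcal{E}\mathbf{u}_\nu/\max(g,\beta|\mathcal{E}\mathbf{u}_\nu|)$ does not behave well under mere weak convergence, and instead to exploit the variational (minimization) characterization of the iterates. Recall that $\mathbf{u}_\nu$ is the unique minimizer of $J_\nu$ over $\mathbf{H}_0^1(\Omega)$ and $\mathbf{u}$ the unique minimizer of $J$ over $V$. Since $\Div\mathbf{u}=0$, testing the optimality of $\mathbf{u}_\nu$ against the feasible competitor $\mathbf{u}$ gives $J(\mathbf{u}_\nu)+\tfrac{\nu}{2}\|\Div\mathbf{u}_\nu\|_{L^2}^2 = J_\nu(\mathbf{u}_\nu)\le J_\nu(\mathbf{u})=J(\mathbf{u})$. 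From this single inequality I would read off two facts: first, $J(\mathbf{u}_\nu)\le J(\mathbf{u})$, so by coercivity of $J$ (via Korn's inequality) the family $\{\mathbf{u}_\nu\}$ is bounded in $\mathbf{H}_0^1(\Omega)$; second, since $J$ is bounded below, $\tfrac{\nu}{2}\|\Div\mathbf{u}_\nu\|_{L^2}^2\le J(\mathbf{u})-\inf J$ forces $\|\Div\mathbf{u}_\nu\|_{L^2}^2=O(1/\nu)\to 0$ as $\nu\to\infty$.

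Next I would extract a subsequence with $\mathbf{u}_\nu\rightharpoonup\mathbf{u}_*$ weakly in $\mathbf{H}_0^1(\Omega)$. Continuity of $\Div$ gives $\Div\mathbf{u}_\nu\rightharpoonup\Div\mathbf{u}_*$ in $L_0^2(\Omega)$, and combined with $\|\Div\mathbf{u}_\nu\|_{L^2}\to 0$ this yields $\Div\mathbf{u}_*=0$, i.e. $\mathbf{u}_*\in V$. Each of the three pieces of $J$ is well behaved under weak convergence: the terms $\mu\|\mathcal{E}\cdot\|_{\mathbb{L}^2}^2$ and $\int_\Omega\Psi(\mathcal{E}\cdot)\,dx$ are convex and strongly continuous, hence weakly lower semicontinuous, while $\mathbf{u}\mapsto\int_\Omega\mathbf{f}\cdot\mathbf{u}\,dx$ is weakly continuous. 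Therefore $J(\mathbf{u}_*)\le\liminf J(\mathbf{u}_\nu)\le J(\mathbf{u})$; since $\mathbf{u}_*\in V$ and $\mathbf{u}$ is the unique minimizer of $J$ over $V$, we conclude $\mathbf{u}_*=\mathbf{u}$ and, moreover, $J(\mathbf{u}_\nu)\to J(\mathbf{u})$.

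The delicate point, which I expect to be the main obstacle, is upgrading this to strong convergence, because the individual energy contributions are only known to be weakly lower semicontinuous. Here I would use the convergence of the total energy together with the lower semicontinuity of each term to force convergence of each term separately. Writing $a_\nu=\mu\|\mathcal{E}\mathbf{u}_\nu\|_{\mathbb{L}^2}^2$, $b_\nu=\int_\Omega\Psi(\mathcal{E}\mathbf{u}_\nu)\,dx$ and $c_\nu=-\int_\Omega\mathbf{f}\cdot\mathbf{u}_\nu\,dx$, one has $c_\nu\to c_*$, $\liminf a_\nu\ge a_*$, $\liminf b_\nu\ge b_*$ and $a_\nu+b_\nu+c_\nu\to a_*+b_*+c_*$, where the starred quantities denote the corresponding values at $\mathbf{u}$. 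A short subsequence argument then shows $a_\nu\to a_*$, i.e. $\|\mathcal{E}\mathbf{u}_\nu\|_{\mathbb{L}^2}\to\|\mathcal{E}\mathbf{u}\|_{\mathbb{L}^2}$.

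In the Hilbert space $\mathbb{L}^2(\Omega)$, the weak convergence $\mathcal{E}\mathbf{u}_\nu\rightharpoonup\mathcal{E}\mathbf{u}$ together with this norm convergence implies strong convergence $\mathcal{E}\mathbf{u}_\nu\to\mathcal{E}\mathbf{u}$, and Korn's inequality then upgrades it to $\mathbf{u}_\nu\to\mathbf{u}$ strongly in $\mathbf{H}_0^1(\Omega)$. Finally, since every weakly convergent subsequence has the same limit $\mathbf{u}$, a standard subsequence-of-subsequences argument promotes the convergence to the entire sequence, establishing \eqref{convunu}.
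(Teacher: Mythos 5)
Your proof is correct, but it follows a genuinely different route from the paper. The paper argues directly at the PDE level: it subtracts \eqref{Binu} from \eqref{unu}, tests the resulting identity with $\mathbf{v}=\mathbf{u}_\nu-\mathbf{u}$, shows that the difference of Huber terms $\int_\Omega\bigl(\tfrac{\mathcal{E}\mathbf{u}_\nu}{\theta_\beta(\mathbf{u}_\nu)}-\tfrac{\mathcal{E}\mathbf{u}}{\theta_\beta(\mathbf{u})}\bigr):\mathcal{E}(\mathbf{u}_\nu-\mathbf{u})\,dx$ is nonnegative using the pointwise Lipschitz-type estimate of Lemma \ref{lem:reg} and the bound \eqref{thleqbeta}, and then combines Korn's and Young's inequalities to obtain the quantitative estimate $C\|\mathbf{u}_\nu-\mathbf{u}\|^2_{\mathbf{H}_0^1}\leq \tfrac{1}{2\nu}\|p\|^2_{L^2}$. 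This buys an explicit convergence rate $O(\nu^{-1/2})$ with a constant expressed through the pressure, needs no compactness, and sets up the pressure-recovery argument of the next section; on the other hand it relies on the existence of $p$ (de Rham's theorem) and on the specific monotonicity structure of the regularized term. Your argument is variational: the minimality comparison $J_\nu(\mathbf{u}_\nu)\leq J_\nu(\mathbf{u})=J(\mathbf{u})$ gives boundedness and $\|\Div\mathbf{u}_\nu\|_{L^2}^2=O(1/\nu)$, weak lower semicontinuity of the convex functional identifies every weak cluster point with the unique minimizer over $V$, and the convergence of the total energy, split term by term, yields $\|\mathcal{E}\mathbf{u}_\nu\|_{\mathbb{L}^2}\to\|\mathcal{E}\mathbf{u}\|_{\mathbb{L}^2}$, which together with weak convergence gives strong convergence in $\mathbb{L}^2$ (Radon--Riesz) and, via Korn, in $\mathbf{H}_0^1(\Omega)$; a subsequence-of-subsequences argument concludes. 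This avoids the pressure and the monotonicity lemma entirely, using only convexity and coercivity, so it is more elementary in its prerequisites and would extend verbatim to other convex regularizations; the price is that it is purely qualitative and produces no rate.
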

\begin{proof}
By subtracting the equation \eqref{Binu} from equation \eqref{unu}, we have that the variational problem holds
\begin{equation}\label{diunuv}
\begin{array}{lll}
\displaystyle \mu\int_\Omega \mathcal{E}(\mathbf{u}_\nu -\mathbf{u}):\mathcal{E}\mathbf{v}\,dx + g\beta\int_\Omega \left(\frac{\mathcal{E}\mathbf{u}_\nu}{\theta_\beta(\mathbf{u}_\nu)} -\frac{\mathcal{E}\mathbf{u}}{\theta_\beta(\mathbf{u})} \right):\mathcal{E}\mathbf{v}\,dx\vspace{0.2cm}\\ \hspace{2.5cm}\displaystyle+\nu\int_\Omega (\Div\mathbf{u}_\nu)(\Div\mathbf{v})\,dx = -\int_\Omega p\Div\mathbf{v}\,dx, \,\,\,  \forall \mathbf{v}\in \mathbf{H}_0^1(\Omega),
\end{array}
\end{equation}
where $\theta_\beta(\mathbf{u})$ was introduced in Lemma \ref{lem:reg}. Next, we take $\mathbf{v}= \mathbf{u}_\nu - \mathbf{u}$ in the above equation, where $\mathbf{u}$ fulfills the divergence--free condition $\Div\mathbf{u}=0$. Then, it follows that
\begin{equation}\label{difunu}
\begin{array}{lll}
\displaystyle\mu\int_\Omega|\mathcal{E}(\mathbf{u}_\nu-\mathbf{u})|^2 \,dx + g\beta\int_\Omega \left(\frac{\mathcal{E}\mathbf{u}_\nu}{\theta_\beta(\mathbf{u}_\nu)} -\frac{\mathcal{E}\mathbf{u}}{\theta_\beta(\mathbf{u})} \right):\mathcal{E}(\mathbf{u}_\nu- \mathbf{u})\,dx\vspace{0.2cm}\\ \hspace{5cm}\displaystyle+\nu\int_\Omega |\Div\mathbf{u}_\nu|^2\,dx= -\int_\Omega p \Div\mathbf{u}_\nu\,dx.
\end{array}
\end{equation}
Let us focus on the second term on the left hand side of \eqref{difunu}. Here, Lemma \ref{lem:reg} and the Cauchy-Schwarz inequality imply that
\begin{equation*}
\begin{array}{lll}
\displaystyle \int_\Omega \left(\frac{\mathcal{E}\mathbf{u}_\nu}{\theta_\beta(\mathbf{u}_\nu)} -\frac{\mathcal{E}\mathbf{u}}{\theta_\beta(\mathbf{u})} \right):\mathcal{E}(\mathbf{u}_\nu- \mathbf{u})\,dx \vspace{0.2cm}\\\hspace{2.7cm}
\displaystyle= \int_\Omega\left[\mathcal{E}\mathbf{u}\left(\frac{1}{\theta_\beta(\mathbf{u}_\nu)}-\frac{1}{\theta_\beta(\mathbf{u})}\right)+\frac{\mathcal{E}\mathbf{u}_\nu -\mathcal{E}\mathbf{u}}{\theta_\beta(\mathbf{u_\nu}_{\nonumber})}\right]:\mathcal{E}(\mathbf{u}_\nu-\mathbf{u})\,dx\vspace{0.2cm}\\\hspace{2.7cm}\displaystyle
=\int_\Omega\left[\frac{|\mathcal{E}(\mathbf{u}_\nu-\mathbf{u})|^2}{\theta_\beta(\mathbf{u}_\nu)}+\left(\frac{\theta_\beta(\mathbf{u})-\theta_\beta(\mathbf{u}_\nu)}{\theta_\beta(\mathbf{u}_\nu)\theta_\beta(\mathbf{u})}\right)\,\mathcal{E}\mathbf{u}:\mathcal{E}(\mathbf{u}_\nu-\mathbf{u})\right]\,dx\vspace{0.2cm}\\\hspace{2.7cm}\displaystyle
=\int_\Omega\frac{1}{\theta_\beta(\mathbf{u}_\nu)}\left[|\mathcal{E}(\mathbf{u}_\nu-\mathbf{u})|^2-\frac{\theta_\beta(\mathbf{u}_\nu)-\theta_\beta(\mathbf{u})}{\theta_\beta(\mathbf{u})}\,\mathcal{E}\mathbf{u}:\mathcal{E}(\mathbf{u}_\nu-\mathbf{u})\right]\,dx\vspace{0.2cm}\\\hspace{2.7cm}\displaystyle
\geq \int_\Omega\frac{1}{\theta_\beta(\mathbf{u}_\nu)}\left[|\mathcal{E}(\mathbf{u}_\nu-\mathbf{u})|^2-\beta \frac{|\mathcal{E}\mathbf{u}_\nu-\mathcal{E}\mathbf{u}|}{\theta_\beta(\mathbf{u})}\,|\mathcal{E}\mathbf{u}||\mathcal{E}(\mathbf{u}_\nu-\mathbf{u})|\right]\,dx.
\end{array}
\end{equation*}

Thus, by taking into account \eqref{thleqbeta}, it holds that 
\begin{equation*}
\begin{array}{lll}
\displaystyle\int_\Omega \left(\frac{\mathcal{E}\mathbf{u}_\nu}{\theta_\beta(\mathbf{u}_\nu)} -\frac{\mathcal{E}\mathbf{u}}{\theta_\beta(\mathbf{u})} \right):\mathcal{E}(\mathbf{u}_\nu- \mathbf{u})dx\vspace{0.2cm}\\\hspace{3cm}\displaystyle
\geq \int_\Omega\frac{1}{\theta_\beta(\mathbf{u}_\nu)}\left[|\mathcal{E}(\mathbf{u}_\nu-\mathbf{u})|^2-\beta |\mathcal{E}(\mathbf{u}_\nu-\mathbf{u})|^2\frac{|\mathcal{E}\mathbf{u}|}{\theta_\beta(\mathbf{u})}\,\right]dx \geq0.
\end{array}
\end{equation*}
Next, inserting the last relation in \eqref{difunu}, and using Korn's inequality, we conclude the existence of a constant $C>0$ such that 
\begin{equation}\label{difunu2}
\begin{array}{lll}
\displaystyle
C\|\mathbf{u}_\nu -\mathbf{u}\|^2_{\mathbf{H}_0^1} + \nu\int_\Omega |\Div\mathbf{u}_\nu|^2\,dx \leq \mu \int_\Omega |\mathcal{E}(\mathbf{u}_\nu-\mathbf{u})|^2\,dx\vspace{0.2cm}\\\hspace{1cm} \displaystyle + g\beta \int_\Omega \left(\frac{\mathcal{E}\mathbf{u}_\nu}{\theta_\beta(\mathbf{u}_\nu)} -\frac{\mathcal{E}\mathbf{u}}{\theta_\beta(\mathbf{u})} \right):\mathcal{E}(\mathbf{u}_\nu- \mathbf{u})\,dx + \nu\int_\Omega |\Div\mathbf{u}_\nu|^2\,dx\vspace{0.2cm}\\\hspace{4cm}=\displaystyle-\int_\Omega p \Div\mathbf{u}_\nu\,dx\leq \|p\|_{L^2}\|\Div\mathbf{u}_\nu\|_{L^2}.
\end{array}
\end{equation}
Further, the Young's inequality implies that
\[
\|p\|_{L^2}\|\Div\mathbf{u}_\nu\|_{L^2}\leq \frac{\nu}{2}\|\Div\mathbf{u}_\nu\|^2_{L^2} + \frac{1}{2\nu}\|p\|^2_{L^2},
\]
for $\nu>0$. By using this inequality in \eqref{difunu2}, we obtain that
\[
C\|\mathbf{u}_\nu -\mathbf{u}\|^2_{\mathbf{H}_0^1} + \nu\int_\Omega |\Div\mathbf{u}_\nu|^2\,dx \leq \frac{\nu}{2} \int_\Omega |\Div\mathbf{u}_\nu|^2\,dx+ \frac{1}{2\nu}\|p\|_{L^2},
\]
which yields that
\[
C\|\mathbf{u}_\nu -\mathbf{u}\|^2_{\mathbf{H}_0^1} + \frac{\nu}{2}\int_\Omega |\Div\mathbf{u}_\nu|^2\,dx \leq \frac{1}{2\nu}\|p\|_{L^2}.
\]
Consequently,
\[
C\|\mathbf{u}_\nu -\mathbf{u}\|^2_{\mathbf{H}_0^1}  \leq \frac{1}{2\nu}\|p\|_{L^2}.
\]
Taking the limit $\nu\rightarrow\infty$, we get that
$\mathbf{u}_\nu \rightarrow \mathbf{u} \in \mathbf{H}_0^1(\Omega).$
\end{proof}

\section{Recovering the fluid's pressure}
It is well known that the existence of a function $p\in L^2_0(\Omega)$ solving the PDE given in \eqref{Binu} is guaranteed by the de Rham's theorem, see  \cite[p.14]{Temam}. This function can be seen as a Lagrange multiplier associated to the restriction $\Div\mathbf{u}=0$.  Similarly, we will argue that in the case of the penalty methods presented in this paper, there exists a function $p$ playing the role for the pressure of the fluid. This existence result is reached trough a convergence argument in the divergence of the vector field $\mathbf{u}_\nu$ that we analyze for quadratic and exact penalizations. Naturally, in the case of exact penalization, the associated analysis is more challenging due to the nondiferentaility of the $L^1$--norm and the presence of its associated subgradients. 

\subsection{Pressure in the Quadratic Penalization}
In order to start this analysis, we rewrite the equation \eqref{diunuv} as follows
\begin{equation*}
\begin{array}{lll}
\displaystyle \mu\int_\Omega \mathcal{E}(\mathbf{u}_\nu -\mathbf{u}):\mathcal{E}\mathbf{v}\,dx + g\beta\int_\Omega \left(\frac{\mathcal{E}\mathbf{u}_\nu}{\theta_\beta(\mathbf{u}_\nu)} -\frac{\mathcal{E}\mathbf{u}}{\theta_\beta(\mathbf{u})} \right):\mathcal{E}\mathbf{v}\,dx\vspace{0.2cm}\\ \hspace{2.5cm}\displaystyle+\int_\Omega (\nu\Div\mathbf{u}_\nu+ p)(\Div\mathbf{v})\,dx = 0, \,\,\,  \forall \mathbf{v}\in \mathbf{H}_0^1(\Omega),
\end{array}
\end{equation*}
which, by following \cite[Th. 6.14-1]{Ciarlet}, yields that
\begin{equation}\label{wdifunu}
\begin{array}{lll}
\displaystyle \mu\int_\Omega \mathcal{E}(\mathbf{u}_\nu -\mathbf{u}):\mathcal{E}\mathbf{v}\,dx + g\beta\int_\Omega \left(\frac{\mathcal{E}\mathbf{u}_\nu}{\theta_\beta(\mathbf{u}_\nu)} -\frac{\mathcal{E}\mathbf{u}}{\theta_\beta(\mathbf{u})} \right):\mathcal{E}\mathbf{v}\,dx\vspace{0.2cm}\\ \hspace{2.5cm}\displaystyle-\langle\nabla (\nu\Div\mathbf{u}_\nu+ p)\,,\,\mathbf{v}\rangle_{\mathbf{H}^{-1},\mathbf{H}_0^1} = 0, \,\,\,  \forall \mathbf{v}\in \mathbf{H}_0^1(\Omega).
\end{array}
\end{equation}
Next, by taking limit $\nu\rightarrow\infty$ in \eqref{wdifunu}, and by considering the continuity of $\theta_\beta$ and Theorem \ref{th:convunu}, we obtain that
\begin{equation}\label{gradunu}
\frac{\partial}{\partial x_i} \left(\nu \Div\mathbf{u}_\nu\right)\rightarrow -\frac{\partial p}{\partial x_i},\,\,\mbox{ in $\mathbf{H}^{-1}(\Omega)$ and for all $i=1,\ldots,n$.}
\end{equation}
On the other hand, since $p\in L_0^2(\Omega)$, $\mathbf{u}_\nu=0$ on $\partial\Omega$, and thanks to the divergence theorem, we have that
\[
\begin{array}{lll}
\int_\Omega (p+\nu\Div\mathbf{u}_\nu)\,dx&=& \int_\Omega p\,dx + \nu \int_\Omega \Div\mathbf{u}_\nu\,dx\vspace{0.2cm}\\ &=&  \int_{\partial\Omega} \mathbf{u}_\nu\cdot\vec{\mathbf{n}}\,dx =0.
\end{array}
\]
Therefore, \cite[Lem. 6.1, pp. 100]{Temam} yields that
\[
\|p+\nu\Div\mathbf{u}_\nu\|_{L^2}\leq \sum_{i=1}^n \left\|\frac{\partial}{\partial x_i} \left(p+\nu \Div\mathbf{u}_\nu\right)\right\|_{\mathbf{H}^{-1}(\Omega)},
\]
which, thanks to \eqref{gradunu}, implies that
\[
-\nu \Div\mathbf{u}_\nu \rightarrow p,\,\,\mbox{as $\nu\rightarrow\infty$}.
\]

\subsection{Pressure in the Exact Penalization}
Now, we turn our discussion to the pressure recovery in the context of exact penalization. We start by recalling that the optimality condition \eqref{eq:opt_cond_nondiff} implies that the solution $\mathbf{u}_\sigma$ of the exact penalized problem \eqref{minexact}, is characterized by the existence of a function $\zeta\in L_0^2(\Omega)$, such that $|\zeta|\leq \sigma$ a.e. in $\Omega$, satisfying:
\[
\left\langle-J'(\mathbf{u}_\sigma),\mathbf{v}\right\rangle_{\mathbf{H}^{-1},\mathbf{H}_0^1} = \left(\zeta,\Div\mathbf{v}\right)_{L^2},\,\forall\,\mathbf{v}\in \mathbf{H}_0^1(\Omega).
\]
By using the Fr\'echet derivative of $J$ given in \eqref{eq:Jp}, we can observe that this equation is equivalent to the following PDE: 
\begin{equation}\label{usigma}
\begin{array}{lll}
\displaystyle\mu\int_\Omega \mathcal{E}\mathbf{u}_\sigma:\mathcal{E}\mathbf{v}\,dx + g\beta\int_\Omega \frac{\mathcal{E}\mathbf{u}_\sigma:\mathcal{E}\mathbf{v}}{\theta_\beta(\mathbf{u}_\sigma)}\,dx -\int_\Omega\mathbf{f}\cdot\mathbf{v}\,dx\vspace{0.2cm}\\\hspace{5cm}= \displaystyle-\int_\Omega \zeta \Div\mathbf{v}\,dx, \,\forall\mathbf{v}\in \mathbf{H}_0^1(\Omega).
\end{array}
\end{equation}
By subtracting equation \eqref{Binu} from \eqref{usigma} we have that
\begin{equation}\label{usigdifu}
\begin{array}{lll}
\displaystyle\mu\int_\Omega \mathcal{E}(\mathbf{u}_\sigma-\mathbf{u}):\mathcal{E}\mathbf{v}\,dx + g\beta\int_\Omega \left(\frac{\mathcal{E}\mathbf{u}_\sigma}{\theta_\beta(\mathbf{u}_\sigma)}- \frac{\mathcal{E}\mathbf{u}}{\theta_\beta(\mathbf{u})}\right):\mathcal{E}\mathbf{v}\,dx \vspace{0.2cm}\\\hspace{5cm}=\displaystyle- \int_\Omega\left(\zeta+p\right)\Div \mathbf{v}\,dx, \,\forall\mathbf{v}\in \mathbf{H}_0^1(\Omega).
\end{array}
\end{equation}
Taking $\mathbf{v}=\mathbf{u}_\sigma-\mathbf{u}$ in \eqref{usigdifu}, we obtain that
\[
\begin{array}{lll}
\displaystyle\mu\int_\Omega |\mathcal{E}(\mathbf{u}_\sigma-\mathbf{u})|^2\,dx + g\beta\int_\Omega \left(\frac{\mathcal{E}\mathbf{u}_\sigma}{\theta_\beta(\mathbf{u}_\sigma)}- \frac{\mathcal{E}\mathbf{u}}{\theta_\beta(\mathbf{u})}\right):\mathcal{E}(\mathbf{u}_\sigma-\mathbf{u})\,dx \vspace{0.2cm}\\\hspace{6cm}=\displaystyle- \int_\Omega\left(\zeta+p\right)\Div(\mathbf{u}_\sigma-\mathbf{u})\,dx.
\end{array}
\]
Next, we already proved that the second term in the left hand side is non negative. Therefore, this last equation, together with  Korn's inequality and the incompressibility of $\mathbf{u}$ imply the existence of a constant $C>0$, such that
\begin{equation}\label{convusigma}
C\|\mathbf{u}_\sigma -\mathbf{u}\|^2_{\mathbf{H}^1_0}\leq -\int_\Omega (p+\zeta)\Div \mathbf{u}_\sigma\,dx.
\end{equation}
On the other hand, Theorem \ref{teo:exact2} states that there exists $\sigma_0>0$ such that $\Div\mathbf{u}_\sigma=0$, for all $\sigma\geq \sigma_0$. Therefore, \eqref{convusigma} implies that 
\begin{equation}\label{usigconvu}
\mathbf{u}_\sigma = \mathbf{u},\,\,\mbox{ in $\mathbf{H}_0^1(\Omega)$, for all $\sigma\geq \sigma_0$}.
\end{equation}
Also, equation \eqref{convusigma} and the Korn's inequality imply that
\[
\int_\Omega |\mathcal{E}\mathbf{u}_\sigma -\mathcal{E}\mathbf{u}|^2\,dx=0,\,\,\mbox{ for all $\sigma>\sigma_0$},
\]
which, thanks to \cite[Prop. 2.10]{Giaquinta}, yields that $|\mathcal{E}\mathbf{u}_\sigma-\mathcal{E}\mathbf{u}|^2=0$, a.e. in $\Omega$. Consequently, we can infer that
\begin{equation}\label{eqEusig=0}
\mathcal{E}\mathbf{u}_\sigma=\mathcal{E}\mathbf{u},\,\,\mbox{a.e. in $\Omega$ and for all $\sigma>\sigma_0$}
\end{equation}
Next, we establish pointwise bounds for $\theta_\beta(\mathbf{u}_\sigma)- \theta_\beta(\mathbf{u})$ on the next four disjoint sets: $E_\beta^{\mathbf{u}_\sigma}\cap E_\beta^\mathbf{u}$, $E_\beta^{\mathbf{u}_\sigma}\cap I_\beta^\mathbf{u}$, $E_\beta^\mathbf{u}\cap I_\beta^{\mathbf{u}_\sigma}$ and $I_\beta^{\mathbf{u}_\sigma}\cap I_\beta^\mathbf{u}$. Note that these sets were defined in Lemma \ref{lem:reg}.

In $E_\beta^{\mathbf{u}_\sigma}\cap E_\beta^\mathbf{u}$, we directly have that $\theta_\beta(\mathbf{u}_\sigma)- \theta_\beta(\mathbf{u})=0$. In $E_\beta^{\mathbf{u}_\sigma}\cap I_\beta^\mathbf{u}$, we have that $\theta_\beta(\mathbf{u}_\sigma)- \theta_\beta(\mathbf{u})=g-\beta|\mathcal{E}\mathbf{u}|\leq0$. On the other hand, thanks to \eqref{eqEusig=0}, we have that $g-\beta|\mathcal{E}\mathbf{u}| = g-\beta|\mathcal{E}\mathbf{u}_\sigma|\geq 0$. Thus, we have that
\[
\theta_\beta(\mathbf{u}_\sigma)- \theta_\beta(\mathbf{u})= 0,\,\,\mbox{a.e. in $E_\beta^{\mathbf{u}_\sigma}\cap I_\beta^\mathbf{u}$ and for all $\sigma>\sigma_0$}.
\]
In $E_\beta^\mathbf{u}\cap I_\beta^{\mathbf{u}_\sigma}$, we have that $\theta_\beta(\mathbf{u}_\sigma)- \theta_\beta(\mathbf{u})=\beta|\mathcal{E}\mathbf{u}_\sigma|-g\geq 0$. Finally, in $I_\beta^{\mathbf{u}_\sigma}\cap I_\beta^\mathbf{u}$, we have that  $\theta_\beta(\mathbf{u}_\sigma)- \theta_\beta(\mathbf{u})=\beta|\mathcal{E}\mathbf{u}_\sigma|-\beta|\mathcal{E}\mathbf{u}|$. This expression, together with  \eqref{eqEusig=0}, implies that 
\[
\theta_\beta(\mathbf{u}_\sigma)- \theta_\beta(\mathbf{u})= 0,\,\,\mbox{a.e. in $I_\beta^{\mathbf{u}_\sigma}\cap I_\beta^\mathbf{u}$ and for all $\sigma>\sigma_0$}.
\]
Summarizing, since the four given sets provide a disjoint partitioning of $\Omega$, we can conclude that $\theta_\beta(\mathbf{u}_\sigma)- \theta_\beta(\mathbf{u})\geq 0$, a.e. in $\Omega$ and for all $\sigma>\sigma_0$. Now, Lemma \ref{lem:reg} implies that $\theta_\beta(\mathbf{u}_\sigma)- \theta_\beta(\mathbf{u})< |\mathcal{E}\mathbf{u}_\sigma - \mathcal{E}\mathbf{u}|=0$, a.e. in $\Omega$ and for all $\sigma>\sigma_0$. Therefore, we can state that
\begin{equation}\label{theta=0}
\theta_\beta(\mathbf{u}_\sigma)- \theta_\beta(\mathbf{u})=0, \,\,\mbox{a.e. in $\Omega$ and for all $\sigma>\sigma_0$}.
\end{equation}
We now turn our attention to the recovery of pressure for the flow. First, note that \cite[Th. 6.14-1]{Ciarlet} allows us to rewrite the equation \eqref{usigdifu} as follows
\[
\begin{array}{lll}
\displaystyle\mu\int_\Omega \mathcal{E}(\mathbf{u}_\sigma-\mathbf{u}):\mathcal{E}\mathbf{v}\,dx + g\beta\int_\Omega \left(\frac{\mathcal{E}\mathbf{u}_\sigma}{\theta_\beta(\mathbf{u}_\sigma)}- \frac{\mathcal{E}\mathbf{u}}{\theta_\beta(\mathbf{u})}\right):\mathcal{E}\mathbf{v}\,dx \vspace{0.2cm}\\\hspace{5cm}=\langle\nabla(\zeta+p)\,,\,\mathbf{v}\rangle_{\mathbf{H}^{-1},\mathbf{H}_0^1}, \,\forall\mathbf{v}\in \mathbf{H}_0^1(\Omega).
\end{array}
\]
From this equation, \eqref{convusigma} and \eqref{theta=0}, we conclude, for all $\sigma\geq \sigma_0$, that
\begin{equation}\label{gradzeta}
-\frac{\partial}{\partial x_i}\zeta = \frac{\partial}{\partial x_i}p,\,\,\mbox{in $\mathbf{H}^{-1}(\Omega)$, for all $i=1,\ldots,n$.}
\end{equation}
Finally, since $p,\zeta\in L^2_0(\Omega)$, we have that
\[
\int_\Omega (\zeta + p)\,dx=0,
\]
which implies, together with \cite[Lem. 6.1, pp. 100]{Temam} and \eqref{gradzeta}, that
\[
\|p+\zeta\|_{L^2} \leq \sum_{i=1}^n \left\|\frac{\partial}{\partial x_i}(\zeta +p)\right\|_{\mathbf{H}^{-1}} = 0,\,\,\mbox{for $\sigma\geq \sigma_0$}.
\]
Consequently, $-\zeta=p$, for all $\sigma\geq\sigma_0$.
\section{Exact Penalization Second Order  Method }
We are in place to describe a descent algorithm using second--order information for solving the nonsmooth problem generated by the exact penalization formulation. 

This algorithm follows ideas from the nonsmooth method designed for finite dimensions in \cite{Merino}. In particular, the generalized differentiation is used to enrich second--order information of the smooth part of the cost in a functional space setting. Naturally, the extension of this method to the numerical solution of viscoplastic fluids entails several analytical and numerical challenges. One crucial step of the algorithm consist of the utilization of approximated second--order information for the computation of the descent direction. We will see that this procedure allows us to compute descent directions directly in the space $V$. With this novel feature, the algorithm solves the non-constrained problem \eqref{minexact} ensuring that, at each iteration $k$, the descent direction $\mathbf{w}_k$ satisfies $\Div \mathbf{w}_k\approx 0$, with prescribed precision. 
\subsection{First order information}
Recall that the regular term $J(\mathbf{u})$ of problem \eqref{minexact} involves a Huber regularization function of the Frobenius norm $\Psi(\mathcal{E}\mathbf{u}) $. Moreover, let us introduce the active set:
$$E_\beta:= \left\lbrace x \in \Omega : |\mathcal{E}\mathbf{u}(x)| < \frac{g}{\beta} \right\rbrace .$$
In turn, the inactive set corresponds to $\Omega \backslash E_\beta$. 
Hence,  the Fr\'echet derivative \eqref{eq:Jp} can be rewriten as follows:
\begin{equation}\label{eq:Jp_max}
\begin{array}{lll}
\left\langle J'(\mathbf{u}),\mathbf{v} \right\rangle_{\mathbf{H}^{-1}, \mathbf{H}^1_0} = \displaystyle 2 \mu \int_{\Omega} \mathcal{E}\mathbf{u}  : \mathcal{E}\mathbf{v} \, dx  +\displaystyle g \int_{\Omega \backslash E_\beta} \frac{\mathcal{E}\mathbf{u}  : \mathcal{E}\mathbf{v} }{|\mathcal{E}\mathbf{u}|} \,\, dx \vspace{0.2cm}\\\hspace{5.5cm}+  \beta \displaystyle \int_{ E_\beta} \mathcal{E}\mathbf{u} : \mathcal{E}\mathbf{v}  \,\, dx -  \int_{\Omega} \mathbf{f} \cdot\mathbf{u}  \,\, dx.
\end{array}
\end{equation}
%
In view of the nondifferentiability of the objective functional $J_\sigma$ because of the term $h(u):=\sigma \| \Div\mathbf{u} \|_{L^1}$, let us introduce the following notion of steepest descent direction for convex functions based on the projection of the zero element on the subdifferential of the function.

\begin{prop}
Let $\mathcal{H}$ be a real Hilbert space and $f: \mathcal{H} \rightarrow ]- \infty, + \infty]$ be proper and convex, and suppose that $x \in \cont f \backslash \argmin f$. Where $\cont f$ is the domain of continuity of $f$ and $\argmin f$ is the set of global minimizers of f. Set $u$ as the projection of $0$ on the subdifferential $\partial f(x)$ denoted by $ u=P_{\partial f(x)} 0$,  and set $z=-\frac{u}{\|u\|}$. Then, $z$ is the unique minimizer of $f'(x,\cdot)$ over the open unit ball  $B(0,1)$ in $\mathcal{H}$.
\end{prop}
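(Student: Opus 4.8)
The plan is to combine two classical tools of convex analysis in Hilbert space: the representation of the one--sided directional derivative of a convex function as the support function of its subdifferential, and the variational inequality characterizing the metric projection onto a closed convex set. First I would record what the hypotheses buy us. Since $x\in\cont f$, the function $f$ is locally Lipschitz around $x$, so $\partial f(x)$ is a nonempty, closed, bounded and convex subset of $\mathcal{H}$; moreover the directional derivative $f'(x,d):=\lim_{t\downarrow0}\frac{f(x+td)-f(x)}{t}$ exists and is finite for every $d$, and it admits the representation
\begin{equation}\label{eq:suppfun}
f'(x,d)=\max_{\xi\in\partial f(x)}\langle \xi,d\rangle,\qquad d\in\mathcal{H}.
\end{equation}
This is the one genuinely non--routine ingredient; I would cite it from the convex--analysis framework already used in the paper (e.g. \cite{Combettes}). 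Its validity, and in particular the \emph{attainment} of the maximum, is precisely where the continuity hypothesis $x\in\cont f$ enters, since that is what forces $\partial f(x)$ to be bounded. Because $x\notin\argmin f$, Fermat's rule gives $0\notin\partial f(x)$, so the projection $u=P_{\partial f(x)}0$ is well defined and satisfies $u\neq0$; hence $z=-u/\|u\|$ is well defined and $\|z\|=1$.

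Next I would exploit the projection. The characterization of $u=P_{\partial f(x)}0$ reads $\langle 0-u,\xi-u\rangle\leq0$ for every $\xi\in\partial f(x)$, that is,
\begin{equation}\label{eq:projVI}
\langle u,\xi\rangle\geq\|u\|^2,\qquad\forall\,\xi\in\partial f(x).
\end{equation}
Since $u\in\partial f(x)$ itself, the lower bound in \eqref{eq:projVI} is attained at $\xi=u$, so $\min_{\xi\in\partial f(x)}\langle u,\xi\rangle=\|u\|^2$. Substituting $d=z$ in \eqref{eq:suppfun} then gives the exact value of the directional derivative in the proposed direction,
\begin{equation*}
f'(x,z)=\max_{\xi\in\partial f(x)}\Big\langle\xi,-\tfrac{u}{\|u\|}\Big\rangle=-\frac{1}{\|u\|}\,\min_{\xi\in\partial f(x)}\langle\xi,u\rangle=-\|u\|.
\end{equation*}

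Finally I would establish optimality and uniqueness. For any admissible direction $d$ with $\|d\|\leq1$, using \eqref{eq:suppfun} with the feasible choice $\xi=u$ together with the Cauchy--Schwarz inequality,
\begin{equation*}
f'(x,d)\geq\langle u,d\rangle\geq-\|u\|\,\|d\|\geq-\|u\|=f'(x,z),
\end{equation*}
so $z$ is a minimizer. For uniqueness, equality throughout forces both Cauchy--Schwarz equality $\langle u,d\rangle=-\|u\|\|d\|$ (so $d$ is a nonpositive multiple of $u$) and $\|d\|=1$, which pins down $d=-u/\|u\|=z$. The only delicate point, which I would flag explicitly, is that $\|z\|=1$: the minimum value $-\|u\|=-\operatorname{dist}(0,\partial f(x))$ is attained on the unit sphere, hence on the closed unit ball, whereas over the open ball it is only the infimum, approached along the ray $t\mapsto tz$ as $t\uparrow1$. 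I expect \eqref{eq:suppfun} to be the sole real obstacle; once it is in hand, the argument is just the projection inequality \eqref{eq:projVI} and Cauchy--Schwarz.
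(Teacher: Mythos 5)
Your proof is correct and complete, but it is worth noting how it relates to the paper: the paper offers no argument at all for this proposition, its ``proof'' being a single citation to \cite[Prop.\ 17.22]{Combettes}. What you have written is, in effect, the standard proof that sits behind that citation: the continuity hypothesis $x\in\cont f$ guarantees that $\partial f(x)$ is nonempty, convex, closed and bounded (hence weakly compact), so that $f'(x,\cdot)$ is the \emph{attained} support function of $\partial f(x)$; the hypothesis $x\notin\argmin f$ gives $0\notin\partial f(x)$ via Fermat's rule, so $u\neq 0$; and the projection inequality $\langle u,\xi\rangle\geq\|u\|^2$ for all $\xi\in\partial f(x)$ combined with Cauchy--Schwarz yields $f'(x,z)=-\|u\|\leq f'(x,d)$ for all $\|d\|\leq 1$, with the equality analysis pinning down $d=z$. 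Your flag about the ball is also well taken and is the one point where the paper's transcription of the statement is imprecise: since $\|z\|=1$, the point $z$ does not even belong to the \emph{open} unit ball, over which the infimum $-\|u\|$ is not attained; the statement is correct for the closed unit ball, which is exactly what the notation $B(0;1)$ denotes in \cite{Combettes}, so the word ``open'' in the paper's statement is a slip rather than a feature your proof must accommodate. In short: your argument is sound, self-contained where the paper delegates to a reference, and it correctly identifies the only defect in the statement as quoted.
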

\begin{proof}
See \cite[Prop. 17.22]{Combettes}
\end{proof}

By taking into account the last proposition, we introduce the \emph{steepest descent direction} as the solution of the problem  $\boldsymbol{\bar{d}} =P_{\partial J_{\sigma}(\mathbf{u})} 0$, which is equivalent to: 
\begin{equation}\label{eq:minsubnorm}
\boldsymbol{\bar{d}}=\underset{ \boldsymbol{d} \in \partial J_{\sigma}(\mathbf{u}) }{\argmin} \|\boldsymbol{d}\|_{\mathbf{H}^{-1}}, 
\end{equation}

where $\partial J_{\sigma}(\mathbf{u}) = J'(\mathbf{u}) + \partial h(\mathbf{u}) $. We observe that $\boldsymbol{d} \in \partial J_{\sigma}(\mathbf{u}) $ implies that $\boldsymbol{d} = J'(\mathbf{u}) + \eta $ with $\eta \in \partial h(\mathbf{u})$. Then, from \eqref{eq:subdiff} we can rewrite problem \eqref{eq:minsubnorm} in the form
 \begin{equation}\label{eq:minnorm_final}
  \boldsymbol{\bar{d}}=\underset{\zeta \in \sigma \partial \|\cdot\|(\Div u) }{\argmin} \|  J'(u)  -  \text{ grad} (\zeta) \|_{\mathbf{H}^{-1}}.
 \end{equation}

Let us notice that finding the solution $\boldsymbol{\bar{d}}$ of  problem \eqref{eq:minsubnorm} constitutes a box-constrained optimization problem. Therefore, in the next section we  develop an efficient strategy for computing $\boldsymbol{\bar{d}}$.

\subsection{Second Order information: Generalized Differentiability and Semismoothness for Superposition Operators}

We emphazise that the function $h(\mathbf{u})= \sigma\int_\Omega| \Div\mathbf{u} | \, dx$ is nondifferentiable on the kernel of the $\Div$ operator. Hence, in order to calculate first and second order information we will use the Huber regularization analogous to the one presented in Section \ref{sec:1}. Therefore, in this section we study the  second order information available in the weak sense. This information is associated to  the functional $J_{\sigma}(\mathbf{u})=J(\mathbf{u})+ h(\mathbf{u})$, by using the notion of semismoothness for superposition operators. The second order information for functional $J(\mathbf{u})$,  in the finite dimensional case, was presented in \cite{GALO}.

Let us introduce the smoothing function $|\cdot |_{\gamma}: \mathbb{R} \rightarrow \mathbb{R}$ with $\gamma > > 0$ such that
\begin{equation*}
|z|_{\gamma}=\begin{cases}
\sigma |z | -\frac{\sigma^2}{2 \gamma} & if   \,\, |z| \geq \frac{\sigma}{\gamma} \\
\frac{\gamma}{2} |z|^2 & if \,\, |z| < \frac{\sigma}{\gamma}.
\end{cases}
\end{equation*}
This function is known as the Huber (local) regularization of the absolute value (see \cite[Sec. 2.2]{Gonzalez-Andrade3}). In this case, we define the $\Div$-active set as follows:
\begin{equation}\label{eq:div_active_set}
  A_\gamma:= \left\lbrace x \in \Omega : |\Div \mathbf{u}(x)| < \frac{\sigma}{\gamma} \right\rbrace,
\end{equation}
and the $\Div$-inactive set as $\Omega \backslash A_\gamma $. Let us define the regularized mapping $h_\gamma(\mathbf{u})= \sigma\int_\Omega  | \Div\mathbf{u}|_\gamma \, dx$. Analogously as in the case of $J$, its first Fr\'echet-derivative $h'_\gamma(\mathbf{u})$ is given by:
\begin{equation*}
\begin{array}{rl}
\displaystyle \left\langle h'_\gamma(\mathbf{u}),\mathbf{v} \right\rangle_{\mathbf{H}^{-1}, \mathbf{H}^1_0} & =  \displaystyle \sigma \int_{\Omega \setminus A_\gamma} \frac{(\Div \mathbf{u},\Div\mathbf{v})}{|\Div\mathbf{u}|} \,\, dx + \int_{A_\gamma} \gamma (\Div\mathbf{u}, \Div\mathbf{v}) \,\, dx  \vspace{0.2cm}\\
&  =  \displaystyle \sigma \int_{\Omega} \frac{\gamma (\Div\mathbf{u},\Div\mathbf{v})}{\max(\sigma, \gamma |\Div\mathbf{u}|)} \,\, dx,\,\,\, \forall \mathbf{v}\in \mathbf{H}_0^1(\Omega). 
\end{array}
\end{equation*}

However, neither  $J(\mathbf{u})$ nor $h_\gamma(\mathbf{u})$ are twice Fr\'echet differentiable because of the presence of the non-differentiable $\max$ function. Therefore, we consider the notion of semismoothness for superposition operators and the generalized  differential  developed in \cite[Ch. 3]{Ulbrich}. Using these concepts of generalized differentiation, we can compute   second--order information associated to the nondiferentiabilities in $J'(\mathbf{u})$ and $h'_\gamma(\mathbf{u})$.

\subsubsection{Some results for Semismooth Superposition Operators}

 Following the work and notation introduced in \cite{Ulbrich}, we consider Nemytskii (or superposition) operators  $\Phi: Y \rightarrow L^r(\Omega),$ defined by  
 \begin{equation}\label{eq:Nemytskii}
  \Phi(u)(x)=\phi(F(u)(x))
 \end{equation} 
 for almost all $x$ on $\Omega$. Here, $Y$ is a real Banach space. The mappings $\phi:\mathbb{R}^{m} \rightarrow \mathbb{R}$ and $F: Y \rightarrow  \prod_{i=1}^{m} L^{r_i}(\Omega)$, with $1 \leq r \leq r_i < \infty$, satisfy the following conditions \cite{Ulbrich}[Assump. 3.32]:\\
 There are $1\leq r \leq r_i < q_i \leq \infty $, for $ 1\leq i \leq m$, such that:
 \begin{enumerate}[a)]
   \item The mapping $F: Y \rightarrow  \prod_{i=1}^{m} L^{r_i}(\Omega)$ is continuously Fr\'echet differentiable.\label{asum:1}
   \item The mapping $Y \ni u \mapsto F(u) \in  \prod_{i=1}^{m} L^{q_i}(\Omega)$ is locally Lipschitz continuous, i.e., for all $u \in Y$ there exists an open neighborhood $V(u)$ and a constant $L_F(V)$ such that:\label{asum:2}
   \begin{equation*}
     \sum_i \| F_i(u_1) - F_i(u_2)\|_{L^{q_i}} \leq L_F\| u_1 - u_2\| \,\,\, \forall u_1, u_2 \in U.
   \end{equation*}
   \item The function $\phi:\mathbb{R}^{m} \rightarrow \mathbb{R}$ is Lipschitz continuos, i.e.,\label{asum:3}
   \[ 
   |\phi(x_1) - \phi(x_2)|\leq L_{\phi} \|x_1 - x_2\|, \,\, \forall x_1, x_2 \in \mathbb{R}^m.
   \]
   \item $\phi$ is semismooth. \label{asum:4}
 \end{enumerate}
In addition, we use the following definition of semismoothness \cite{Ulbrich}[Def. 3.48].
\begin{defi}\label{def:semismooth}
The operator $\Phi$  is called semismooth at $y \in Y$ if it satisfies
\begin{equation}
\displaystyle \sup_{G \in \partial^{\circ} \Phi(u+h)} \|\Phi(u+h)-\Phi(u)-Gh\|_{L^r} = o(\|h\|_Y), \hspace{2mm} \text{ as } \, h \rightarrow 0 \, \text{ in } Y,
\end{equation}
where  $\partial^{\circ} \Phi$ corresponds to the generalized differential:

\begin{equation}\label{eq:aux007}
 \partial^{\circ} \Phi(u)= \left\lbrace 
 \begin{array}{l}
  G \in \mathcal{L}(Y, L^r)  \text{ such that } \, G:v \mapsto \sum_i M_i(u)(F'_i(u)v), \\
  \text{where  $M(u)$ is a measurable selection of Clarke's generalized Jacobian }\, \partial \phi(F(u))  
 \end{array}
 \right\rbrace
\end{equation}
\end{defi}
Let us point out that, by abuse of notation, we use  the same symbol for Clarke's generalized Jacobian and the subdifferential of convex functions in equation \eqref{eq:opt_cond_exat_p}.

Following this definition, we analyze the  Nemytskii operators in the lemmas below.
 \begin{lema}\label{prop:semismooth_h}
Let $\Phi:\mathbf{H}_0^1(\Omega) \rightarrow L^1(\Omega) $ be a Nemytskii  operator given by $\Phi(\mathbf{u})(x)=\phi(F(\mathbf{u})(x)=\displaystyle \sigma \gamma \frac{  \Div \mathbf{u}(x)}{\max(\sigma, \gamma |\Div \mathbf{u}(x)|)}$.  Then, $\Phi$ is semismooth and, $G(\mathbf{u}) \mathbf{w} \in  \partial^{\circ} \Phi(\mathbf{u})\mathbf{w} $  is given by:

\begin{equation}\label{eq:hpp1}
 \begin{array}{lll}
 G(\mathbf{u})\mathbf{w}\vspace{0.2cm}=\begin{cases} \displaystyle  \sigma\frac{\Div\mathbf{w}}{|\Div\mathbf{u}|} \,  - \sigma \displaystyle   \frac{(\Div\mathbf{u} \,, \Div\mathbf{w})\Div\mathbf{u}}{|\Div \mathbf{u}|^3} & \text{ if  } \, \gamma | \Div\mathbf{u}(x)| \geq \sigma  \\
 \displaystyle  \gamma (\Div\mathbf{w}) & \text{ if } \,\, \gamma | \Div\mathbf{u}(x)| < \sigma.
\end{cases}
 \end{array}
\end{equation}
 \end{lema}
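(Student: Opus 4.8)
The plan is to realize $\Phi$ as a Nemytskii operator of the form \eqref{eq:Nemytskii} and then invoke the composition theorem for semismooth superposition operators from \cite{Ulbrich}, the result underlying Definition \ref{def:semismooth} and the generalized differential \eqref{eq:aux007}. Concretely, I would set $m=1$, take the outer scalar function $\phi(t)=\sigma\gamma\, t/\max(\sigma,\gamma|t|)$ and the inner map $F(\mathbf{u})=\Div\mathbf{u}$, so that $\Phi(\mathbf{u})(x)=\phi(F(\mathbf{u})(x))$. The whole argument then reduces to checking that the pair $(\phi,F)$ satisfies Assumptions \ref{asum:1}--\ref{asum:4}, after which the cited theorem yields both the semismoothness of $\Phi$ and the representation of $\partial^{\circ}\Phi$ through a measurable selection of Clarke's generalized Jacobian $\partial\phi$ composed with $F'$.

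Next I would verify the four assumptions. For \ref{asum:1} and \ref{asum:2} the key observation is that $\Div:\mathbf{H}_0^1(\Omega)\to L^2(\Omega)$ is linear and bounded, so $F'(\mathbf{u})\mathbf{v}=\Div\mathbf{v}$ is independent of $\mathbf{u}$ and $F$ is globally Lipschitz; composing with the continuous embedding $L^2(\Omega)\hookrightarrow L^{r_1}(\Omega)$ on the bounded domain $\Omega$ lets me place $F$ continuously differentiably into $L^{r_1}$ and Lipschitz continuously into $L^{q_1}$ for any exponents with $1=r\le r_1<q_1\le 2$, which supplies the norm gap required by the framework. For \ref{asum:3} I would show that $\phi$ is globally Lipschitz with constant $\gamma$: on the set $\{|t|<\sigma/\gamma\}$ one has $\phi(t)=\gamma t$, while on $\{|t|\ge\sigma/\gamma\}$ one has $\phi(t)=\sigma\,\mathrm{sign}(t)$, and a direct estimate of the difference quotient across the two regions gives the bound. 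For \ref{asum:4} I note that $\phi$ is piecewise $C^1$ with matching values at the breakpoints $t=\pm\sigma/\gamma$, hence semismooth as a scalar function.

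Having verified the hypotheses, I would apply the theorem to conclude that $\Phi$ is semismooth in the sense of Definition \ref{def:semismooth}, with $\partial^{\circ}\Phi(\mathbf{u})\mathbf{w}=M(\mathbf{u})(F'(\mathbf{u})\mathbf{w})=M(\mathbf{u})\,\Div\mathbf{w}$, where $M(\mathbf{u})$ is a measurable selection of $\partial\phi(\Div\mathbf{u})$. It then remains to compute $\partial\phi$ pointwise. On the $\Div$-active set $A_\gamma=\{\gamma|\Div\mathbf{u}|<\sigma\}$ of \eqref{eq:div_active_set}, $\phi$ is classically differentiable with $\phi'=\gamma$, giving $G(\mathbf{u})\mathbf{w}=\gamma\,\Div\mathbf{w}$; on the inactive set $\Omega\setminus A_\gamma=\{\gamma|\Div\mathbf{u}|\ge\sigma\}$, differentiating $\phi(t)=\sigma\, t/|t|$ produces the tensorial expression $\sigma(\Div\mathbf{w}/|\Div\mathbf{u}|-(\Div\mathbf{u},\Div\mathbf{w})\,\Div\mathbf{u}/|\Div\mathbf{u}|^3)$, which is precisely the first branch of \eqref{eq:hpp1}.

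The main obstacle I anticipate is not the pointwise differentiation of $\phi$ but the careful bookkeeping of the integrability exponents: because $\Phi$ maps only into $L^1(\Omega)$ and $\Div$ maps only into $L^2(\Omega)$, I must choose $r,r_1,q_1$ so that the strict gap $r_1<q_1$ demanded by Assumptions \ref{asum:1}--\ref{asum:2} holds while still keeping $F$ differentiable and Lipschitz into the corresponding spaces; the finite measure of $\Omega$ is exactly what makes this possible through the embeddings of the Lebesgue spaces. A secondary point to treat with care is the breakpoint set $\{\gamma|\Div\mathbf{u}|=\sigma\}$, where $\partial\phi$ is genuinely set-valued; since any measurable selection is admissible in \eqref{eq:aux007}, this set does not affect the almost-everywhere formula \eqref{eq:hpp1}, and I would simply remark that either branch may be assigned there.
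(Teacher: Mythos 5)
Your proposal is correct and follows essentially the same route as the paper: realize $\Phi$ as a superposition operator with inner map $F=\Div$ and scalar outer function $\phi$, verify the four assumptions of Ulbrich's framework, invoke the semismoothness theorem for Nemytskii operators, and read off $G(\mathbf{u})\mathbf{w}=M(\mathbf{u})\,\Div\mathbf{w}$ from a measurable selection of Clarke's generalized Jacobian $\partial\phi(\Div\mathbf{u})$. The only deviations are cosmetic: where the paper defers the Lipschitz continuity and semismoothness of $\phi$ to its Appendix (arguing via the Lipschitz $\max$ function and composition of semismooth maps), you argue directly from the piecewise form $\phi(t)=\gamma t$ on $\{|t|<\sigma/\gamma\}$ and $\phi(t)=\sigma\,\mathrm{sign}(t)$ on $\{|t|\geq\sigma/\gamma\}$, and you are in fact more explicit than the paper about the exponent gap $r_1<q_1$ demanded by the framework.
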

 \begin{proof}

  Thanks to  \cite[Th. 3.49]{Ulbrich} and conditions \ref{asum:1}) - \ref{asum:4}) above, $\Phi$  is semismooth on  $\mathbf{H}^1_0(\Omega)$ in the sense of Definition \ref{def:semismooth}. In our context, we have that  $F:\mathbf{H}^1_0(\Omega) \rightarrow L^1(\Omega)$ is defined by $F(\mathbf{u})=\Div\mathbf{u}$ and $\phi: \mathbb{R} \rightarrow \mathbb{R}$ is given by $\phi(a)=\frac{ a}{\max(\sigma, \gamma |a|)}$.

This proof is based on showing that conditions \ref{asum:1}) - \ref{asum:4}) are satisfied, i.e., we have to prove that: $\Div: \mathbf{H}^1_0(\Omega) \rightarrow L^1(\Omega)$ is continuously Fr\'echet differentiable and locally Lipschitz continuous and that $\phi: \mathbb{R} \rightarrow \mathbb{R}$ is Lipschitz continuous and semismooth.  Since $\Div$ is a continuous linear operator in $L_0^2(\Omega)$ then, it is Fr\'echet differentiable and its derivative is the same operator $\Div$ (see \cite[Th. 6.14-1]{Ciarlet}). Additionally, the continuous embedding $L^2_0(\Omega) \subset L^1(\Omega)$ implies that the operator $\Div $ is also continuously Fr\'echet differentiable on $L^1(\Omega)$. It is clear that $\Div$ is Lipschitz continuous, hence conditions \ref{asum:1}) - \ref{asum:2})  are verified. 

The Lipschitz continuity and semismoothness proof of $\phi$  is deferred to the Appendix. Then, conditions  \ref{asum:3}) - \ref{asum:4})  are satisfied. 

Next, we obtain a measurable selection $M( \mathbf{ u})$ of Clarke's generalized Jacobian $ \partial \phi(\Div \mathbf{u}) $  as follows:
\begin{equation}
  M(\mathbf{u}(x))=\begin{cases}\label{eq:aux0006}
\displaystyle\sigma \frac{1}{|\Div\mathbf{u}(x)|} \,  - \sigma\displaystyle   \frac{(\Div\mathbf{u} (x),\Div\mathbf{u} (x))}{|\Div\mathbf{u}(x)|^3}, & if   \,\, \gamma | \Div\mathbf{u}(x)| \geq \sigma\vspace{0.2cm} \\ \gamma, & if \,\, \gamma  |\Div\mathbf{u}(x)| < \sigma.
\end{cases}
 \end{equation}
We refer the reader to  the Appendix for the proof. Then, Definition \ref{def:semismooth} - equation \eqref{eq:aux007} and $F'(\mathbf{u})\mathbf{w}= \Div \mathbf{w}  $  yields that $G(\mathbf{u})\mathbf{w}= M(\mathbf{u})(F'(\mathbf{u})\mathbf{w})=  M(\mathbf{u}) \Div \mathbf{w}$. Finally, from \eqref{eq:aux0006}  we obtain that $G(\mathbf{u})\mathbf{w} \in  \partial^{\circ} \Phi(\mathbf{u}) (\mathbf{w})$ is given by \eqref{eq:hpp1}.
 \end{proof}

To prove that $J'(\mathbf{u})$ is semismooth we introduce the Lemma  below, the arguments of the proof  are analogous to Lemma \ref{prop:semismooth_h}.
 
 \begin{lema}\label{prop:semismooth_J}
Let $\Theta: \mathbf{H}_0^1(\Omega) \rightarrow \mathbb{L}^1(\Omega) $ be an operator given by  $\Theta\mathbf{u}(x)=g \beta\displaystyle \frac{ \mathcal{E}\mathbf{u}(x)}{\max(g,\beta |\mathcal{E}\mathbf{u}(x)|)}$, where $\Theta$ maps $\mathbf{u} \in \mathbf{H}_0^1(\Omega)$ to a matrix of Lebesgue functions. Then, $\Theta$ is semismooth and  $K(\mathbf{u})\mathbf{w} \in  \partial^{\circ} \Theta(\mathbf{u})\mathbf{w} $  is given by:
\begin{equation}\label{eq:Jpp1}
\begin{array}{lll}
 K(\mathbf{u})\mathbf{w}=\vspace{0.2cm} \begin{cases} \displaystyle g  \frac{  \mathcal{E}\mathbf{w}}{|\mathcal{E}\mathbf{u}|} - \displaystyle g \frac{( \mathcal{E}\mathbf{u} :\mathcal{E}\mathbf{w} ) \mathcal{E}\mathbf{u}}{|\mathcal{E}\mathbf{u}|^3},  
 & \text{ if  } \, \beta | \mathcal{E}\mathbf{u}| \geq g,  \\
 \displaystyle \beta \,  \mathcal{E}\mathbf{w},  & \text{ if } \,\, \beta | \mathcal{E}\mathbf{u}| < g.
\end{cases}
\end{array}
\end{equation}
 \end{lema}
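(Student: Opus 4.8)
The plan is to mirror the argument of Lemma \ref{prop:semismooth_h}, recognizing $\Theta$ as a Nemytskii operator of the form \eqref{eq:Nemytskii}. Here I would take the outer map $\phi:\mathbb{R}^{n\times n}\to\mathbb{R}^{n\times n}$, $\phi(A)=g\beta\,A/\max(g,\beta|A|)$, acting on symmetric matrices through the Frobenius norm, and the inner map $F(\mathbf{u})=\mathcal{E}\mathbf{u}$, so that $\Theta(\mathbf{u})(x)=\phi(\mathcal{E}\mathbf{u}(x))$. Since $\phi$ is matrix-valued while the framework of \cite{Ulbrich} is stated for scalar $\phi$, I would apply the theory componentwise to the entries $\phi_{ij}(A)=g\beta A_{ij}/\max(g,\beta|A|)$, each a scalar Lipschitz function of the finitely many entries of $A$, and reassemble the generalized Jacobian at the end.

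The core of the proof consists in verifying conditions \ref{asum:1})--\ref{asum:4}). For \ref{asum:1}) and \ref{asum:2}), I would use that $\mathcal{E}:\mathbf{H}_0^1(\Omega)\to\mathbb{L}^2(\Omega)$ is a bounded linear operator, hence continuously Fr\'echet differentiable with $F'(\mathbf{u})=\mathcal{E}$ and globally (thus locally) Lipschitz; the required integrability gap follows from the continuous embedding $\mathbb{L}^2(\Omega)\hookrightarrow\mathbb{L}^1(\Omega)$ on the bounded domain $\Omega$, so that one may take $r=r_i=1$ and $q_i=2$. Conditions \ref{asum:3}) and \ref{asum:4}), namely the Lipschitz continuity and semismoothness of $\phi$, are exactly the matrix analogue of the scalar facts established for $a\mapsto a/\max(\sigma,\gamma|a|)$ in the Appendix; I would defer these to the Appendix as well, the only new ingredient being that the Frobenius norm $|A|$ plays the role of $|a|$ and that the transition set $\{\beta|A|=g\}$ is again where the two smooth branches must be glued. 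Granting these, \cite[Th. 3.49]{Ulbrich} yields that $\Theta$ is semismooth in the sense of Definition \ref{def:semismooth}.

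It then remains to identify the element of $\partial^{\circ}\Theta(\mathbf{u})$ given by \eqref{eq:Jpp1}. Following \eqref{eq:aux007}, I would produce a measurable selection $M(\mathbf{u})$ of Clarke's generalized Jacobian $\partial\phi(\mathcal{E}\mathbf{u})$ by differentiating $\phi$ on each regime. On the inactive set $\{\beta|\mathcal{E}\mathbf{u}|\geq g\}$ one has $\phi(A)=gA/|A|$, whose derivative in a direction $H$ is $g\,H/|A|-g\,(A:H)A/|A|^3$; on the active set $\{\beta|\mathcal{E}\mathbf{u}|<g\}$ one has $\phi(A)=\beta A$, with derivative $\beta H$. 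Substituting $H=F'(\mathbf{u})\mathbf{w}=\mathcal{E}\mathbf{w}$ then gives $K(\mathbf{u})\mathbf{w}=M(\mathbf{u})\mathcal{E}\mathbf{w}$, which is precisely the two-branch expression \eqref{eq:Jpp1}.

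The main obstacle I anticipate is the semismoothness and Lipschitz verification for the matrix-valued $\phi$, relegated to the Appendix: unlike the scalar case, one must control the rank-one correction term $-g(A:H)A/|A|^3$ near the switching surface $\{\beta|A|=g\}$ and confirm that the componentwise generalized Jacobians assemble consistently into a single measurable selection. Once this is in hand, the remaining steps are direct transcriptions of the proof of Lemma \ref{prop:semismooth_h}.
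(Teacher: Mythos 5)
Your proposal follows essentially the same route as the paper's proof: both treat $\Theta$ componentwise as scalar-valued Nemytskii operators of the full matrix argument (the paper vectorizes $\mathcal{E}\mathbf{u}$ into $m=n^2$ components and works with $\varphi_j(y)=y_j/\max(\cdot,\cdot\,|y|)$, which is exactly your $\phi_{ij}$), verify Ulbrich's assumptions via the boundedness and linearity of $\mathcal{E}$ together with the embedding $\mathbb{L}^2\hookrightarrow\mathbb{L}^1$, defer the scalar Lipschitz/semismoothness facts to the Appendix, invoke \cite[Th. 3.49]{Ulbrich}, and recover the selection $K(\mathbf{u})$ by differentiating on the two branches and substituting $H=\mathcal{E}\mathbf{w}$. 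The reassembly issue you flag as the main obstacle is resolved in the paper by \cite[Prop. 3.6]{Ulbrich} together with summing the componentwise $o(\|\mathbf{h}\|_{\mathbf{H}_0^1})$ estimates in the product $L^1$ norm and identifying the vectorized inner product with the Frobenius product, so it is a routine step rather than a genuine difficulty.
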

 \begin{proof}
  Let $\Theta : \mathbf{H}_0^1 \rightarrow \mathbb{L}^1(\Omega)$ be the matrix operator given by $\Theta\mathbf{u}=(\Theta_{kl} \mathbf{u})$  for $ 1\leq k,l \leq n.$ Further, since $\mathcal{E}\mathbf{u}=(\mathcal{E}_{kl}\mathbf{u}) \in \mathbb{L}^2(\Omega) \subset \mathbb{L}^1(\Omega)$, let us reshape this matrix as the vector $(\mathcal{E}_{j}\mathbf{u})_{j=1}^{m}=(\mathcal{E}_{1} \mathbf{u}, \cdots,\mathcal{E}_{m} \mathbf{u})$ where $m=n^2.$ Then, we have that $(\mathcal{E}_{j}\mathbf{u})_{j=1}^{m} \in \prod_{j=1}^{m} L^{1}(\Omega)$. In what follows we shall construct a Nemytskii operator, $\Theta_{j}:\mathbf{H}_0^1(\Omega) \rightarrow  L^1(\Omega),$  of the form presented in \eqref{eq:Nemytskii} associated to each element $\mathcal{E}_{j}$ as follows:
\[
  \Theta_{j}=\varphi_j(H(\mathbf{u}))=g \beta\displaystyle \frac{ \mathcal{E}_j\mathbf{u}(x)}{\max(g,\beta |\mathcal{E}\mathbf{u}(x)|)}, \,\, \text{ for }\, 1\leq j \leq m.
  \]
  Here, $H:\mathbf{H}_0^1(\Omega) \rightarrow  \prod_{j=1}^{m} L^{1}(\Omega)$ maps $\mathbf{u} \in \mathbf{H}_0^1(\Omega)$ to a vector of Lebesgue functions and it is defined by $H(\mathbf{u})= (\mathcal{E}_{j}\mathbf{u})_{j=1}^{m}$. Additionally, $\varphi_{j}: \mathbb{R}^{m} \rightarrow \mathbb{R}$  is given  by $\varphi_j(y)=\frac{y_j}{\max(\sigma, \gamma |y|)}$.

  Next, we will prove that each element $\Theta_j$ is semismooth by means of \cite[Th. 3.49]{Ulbrich}. Further, this argument will also prove that the operator $\Theta$ is semismooth. 
  Analogous to the proof of Lemma \ref{prop:semismooth_h}, we need to verify that conditions \eqref{asum:1} - \eqref{asum:4} are satisfied in order to prove semismoothness of $\Theta_{j}(\mathbf{u})$ for $1 \leq j \leq m$. The operator $H(\mathbf{u})=(\mathcal{E}_{j}(\mathbf{u}))_{j=1}^{m}$ is continuously Fr\'echet differentiable in  $ \prod_{j=1}^{m} L^2(\Omega)$. The embedding $L^2(\Omega) \subset L^1(\Omega)$  implies that $H(\mathbf{u})$ is also differentiable in $ \prod_{j=1}^{m} L^1(\Omega) $. Moreover, linearity and boundedness of the $\mathcal{E}$ implies Lipschitz continuity of the mapping $H$. The proof of Lipschitz continuity and semismoothness of $\varphi$ are deferred to the Appendix. Then, conditions \ref{asum:1}) - \ref{asum:4}) are verified. Thus, $\Theta_{j}$ for $1 \leq j \leq m$ is semismooth in the sense of Definition \ref{def:semismooth}. Now, it remains to prove that the operator $\Theta : \mathbf{H}_0^1 \rightarrow \mathbb{L}^1(\Omega)$ is also semismooth. Following \cite[Prop. 3.6]{Ulbrich} let us  define the set value mapping:
  \[
   (\partial^{\circ} \Theta_1 \times \cdots \times \partial ^{\circ} \Theta_m ):  V \rightrightarrows \mathcal{L}(\mathbf{H}_0^1,\prod_{j=1}^{m} L^1),
    \]
  where $ (\partial ^{\circ} \Theta_1 \times \cdots \times \partial \Theta_m )(\mathbf{u})$ is the set of all operators $K \in \mathcal{L}(\mathbf{H}_0^1,\prod_{j=1}^{m} L^1)$ of the form 
  \[
    K: \mathbf{w} \mapsto (K_1 \mathbf{w}, \cdots, K_m \mathbf{w})=(K_j \mathbf{w})_{j=1}\,\, \text{ with }\, K_j \in \partial ^{\circ} \Theta_{j}(\mathbf{u}), \, 1\leq j\leq m.
  \]
Moreover, let us consider the nonempty set value mapping $ \partial ^{\circ} \Theta:  V \rightrightarrows \mathcal{L}(\mathbf{H}_0^1,\prod_{j=1}^{m} L^1)$  such that $\partial^{\circ} \Theta(\mathbf{u}) \subset (\partial^{\circ} \Theta_1 \times \cdots \times \partial^{\circ} \Theta_m )(\mathbf{u}) $ for all $\mathbf{u} \in \mathbf{H}_0^1$. 
  Hence, $K \in \partial^{\circ} \Theta (\mathbf{u} + \mathbf{h}) $ implies that $K \in  (\partial^{\circ} \Theta_1 \times \cdots \times \partial^{\circ} \Theta_m )(\mathbf{u} + \mathbf{h})$, i.e., we obtain $K \mathbf{w}=(K_j \mathbf{w})_{j=1}^m$. Next, the space $\prod_{j=1}^{m} L^1$ is equipped with the norm $\|y\|_{\prod_{j} L^1}=\sum_{j=1}^m \|y_j\|_{L^1}$. Therefore, by the semismoothness of each $\Theta_j$ for $1 \leq j \leq m$ we have that
  \begin{equation*}
    \small
    \begin{array}{lll}
      \displaystyle \sup_{K \in \partial^{\circ} \Theta(\mathbf{u} + \mathbf{h})} \|\Theta(\mathbf{u} + \mathbf{h})-\Theta(\mathbf{u})-K \mathbf{h}\|_{\mathbb{L}^1} &\leq& \displaystyle \sum_{j=1}^m  \sup_{K_j \in \partial^{\circ} \Theta_j(\mathbf{u} + \mathbf{h})} \|\Theta_j(\mathbf{u} + \mathbf{h})-\Theta_j(\mathbf{u})-K_j \mathbf{h}\|_{L^1}\\
      &=&o(\|\mathbf{h}\|_{\mathbf{H}_0^1}) \hspace{6mm} \text{ as } \, \|\mathbf{h}\|_{\mathbf{H}_0^1} \rightarrow 0 .
    \end{array}
  \end{equation*}
  Thus, we conclude that $\Theta$ is semismooth in the sense of Definition \ref{def:semismooth}. 

The second part of the proof consists of finding the operator $K=(K_j)_{j=1}^m \in  \partial^{\circ} \Theta (\mathbf{u})$. Following Definition \ref{def:semismooth} - equation \eqref{eq:aux007} we have that $ K_j(\mathbf{u})\mathbf{w}= N_j(\mathbf{u})^{\top}H'(\mathbf{u})(\mathbf{w})$, where $N_j( \mathbf{ u}(x))$ (a measurable selection  of Clarke's generalized Jacobian $ \partial \varphi_j(\mathcal{E} \mathbf{u}(x))$) is given by:
  \begin{equation*}
    N_j^{\top}(\mathbf{u}(x))=\begin{cases}\label{eq:aux006}
  \displaystyle g \frac{e_j^{\top}}{|\mathcal{E} \mathbf{u}(x)|} \,  - g \displaystyle   \frac{ e_j^{\top} \mathcal{E} \mathbf{u} (x)\mathcal{E}\mathbf{u}(x)^{\top}}{|\mathcal{E}\mathbf{u}(x)|^3}, & if   \,\, \beta | \mathcal{E}\mathbf{u}(x)| \geq g, \vspace{0.2cm} \\ \beta, & if \,\, \beta  |\mathcal{E}\mathbf{u}(x)| < g,
  \end{cases}
   \end{equation*}
here $e_j$ stands for the canonical unit vector. In addition, since $H'(\mathbf{u})(\mathbf{w})= \mathcal{E} \mathbf{w}$, we obtain that
\begin{equation*}
  \begin{array}{lll}
   K_j(\mathbf{u})\mathbf{w}=\vspace{0.2cm} \begin{cases} \displaystyle g  \frac{  \mathcal{E}_j\mathbf{w}}{|\mathcal{E}\mathbf{u}|} - \displaystyle g \frac{ \mathcal{E}_j\mathbf{u} (\mathcal{E}\mathbf{u}^{\top}\mathcal{E}\mathbf{w} )}{|\mathcal{E}\mathbf{u}|^3},  
   & \text{ if  } \, \beta | \mathcal{E}\mathbf{u}(x)| \geq g,  \vspace{0.2cm}  \\
   \displaystyle \beta \,  \mathcal{E}(\mathbf{w}) (x),  & \text{ if } \,\, \beta | \mathcal{E}\mathbf{u}(x)| < g.
  \end{cases}
  \end{array}
  \end{equation*}
  Finally, since $K=(K_j)_{j=1}^m $ and the inner product  of the vectorized matrices coincides with the Frobenius product of the matrices we obtain the desired result:
  \begin{equation*}
    \begin{array}{lll}
     K(\mathbf{u})\mathbf{w}=\vspace{0.2cm} \begin{cases} \displaystyle g  \frac{  \mathcal{E}\mathbf{w}}{|\mathcal{E}\mathbf{u}|} - \displaystyle g \frac{( \mathcal{E}\mathbf{u} :\mathcal{E}\mathbf{w} ) \mathcal{E}\mathbf{u}}{|\mathcal{E}\mathbf{u}|^3},  
     & \text{ if  } \, \beta | \mathcal{E}\mathbf{u}| \geq g, \vspace{0.2cm}  \\
     \displaystyle \beta \,  \mathcal{E}\mathbf{w}, & \text{ if } \,\, \beta | \mathcal{E}\mathbf{u}| < g.
    \end{cases}
    \end{array}
    \end{equation*}
\end{proof}
\begin{coro}\label{cor:lipsch}
The mapping  $\Theta(\mathbf{u})(x)=g \beta\displaystyle \frac{ \mathcal{E}(\mathbf{u})(x)}{\max(g,\beta |\mathcal{E}\mathbf{u}(x)|)}$  is Lipschitz continuous with Lipschitz constant $\bold{L}$, i.e.,
\begin{equation}
\| \Theta(\mathbf{u_1})- \Theta(\mathbf{u_2})  \|_{\mathbb{L}^2} \leq \bold{L}  \|\mathbf{u}_1-\mathbf{u}_2\|_{\mathbf{H}^1}.
\end{equation}
\end{coro}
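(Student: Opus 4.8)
The plan is to recognize $\Theta$ as the Nemytskii operator generated by the matrix-valued map $\psi:\mathbb{R}^{n\times n}\to\mathbb{R}^{n\times n}$, $\psi(A)=g\beta\,A/\max(g,\beta|A|)$, so that $\Theta(\mathbf{u})(x)=\psi(\mathcal{E}\mathbf{u}(x))$, and to reduce the claim to the single pointwise Lipschitz estimate $|\psi(A)-\psi(B)|\le\beta|A-B|$ in the Frobenius norm. The integral bound and the boundedness of $\mathcal{E}$ then close the argument.

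First I would perform the reduction to a pointwise inequality. Since
\[
\|\Theta(\mathbf{u}_1)-\Theta(\mathbf{u}_2)\|_{\mathbb{L}^2}^2=\int_\Omega|\psi(\mathcal{E}\mathbf{u}_1(x))-\psi(\mathcal{E}\mathbf{u}_2(x))|^2\,dx,
\]
the a.e. bound $|\psi(A)-\psi(B)|\le\beta|A-B|$ integrates (using $\mathcal{E}\mathbf{u}_1-\mathcal{E}\mathbf{u}_2=\mathcal{E}(\mathbf{u}_1-\mathbf{u}_2)$ and the identity $\int_\Omega\mathcal{E}\mathbf{w}:\mathcal{E}\mathbf{w}\,dx=\|\mathcal{E}\mathbf{w}\|_{\mathbb{L}^2}^2$ from the preliminaries) to $\|\Theta(\mathbf{u}_1)-\Theta(\mathbf{u}_2)\|_{\mathbb{L}^2}\le\beta\,\|\mathcal{E}(\mathbf{u}_1-\mathbf{u}_2)\|_{\mathbb{L}^2}$. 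Finally the boundedness of the symmetric gradient $\|\mathcal{E}\mathbf{w}\|_{\mathbb{L}^2}\le C\|\mathbf{w}\|_{\mathbf{H}^1}$ yields the assertion with $\mathbf{L}=\beta C$.

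The heart of the argument is the pointwise Lipschitz bound on $\psi$, for which no new computation is needed: the relevant pointwise Jacobian is exactly the selection $N$ already computed in Lemma~\ref{prop:semismooth_J}. The map $\psi$ is continuous (the two branches agree on $|A|=g/\beta$) and piecewise $C^1$, hence locally Lipschitz and, by Rademacher's theorem, differentiable a.e. On the branch $\beta|A|<g$ one has $\psi(A)=\beta A$ with Jacobian $\beta I$, of operator norm $\beta$. On the branch $\beta|A|\ge g$ one has $\psi(A)=gA/|A|$, whose Jacobian $g\bigl(I/|A|-(A\otimes A)/|A|^3\bigr)$ is symmetric positive semidefinite with eigenvalues $g/|A|$ and $0$, hence of operator norm $g/|A|\le\beta$ on this branch. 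Thus every element of Clarke's generalized Jacobian of $\psi$ has operator norm at most $\beta$, and integrating along the segment joining $A$ and $B$ (Clarke's mean value inequality for locally Lipschitz maps) gives the desired $|\psi(A)-\psi(B)|\le\beta|A-B|$.

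The \emph{main obstacle} is the gluing/global step: each branch is elementary, but one must confirm that the two pieces combine into a \emph{single} globally Lipschitz map rather than being Lipschitz only on each region. This is secured by bounding the generalized Jacobian uniformly over all of $\mathbb{R}^{n\times n}$—equivalently, by recognizing $\psi=\nabla\Psi$ as the gradient of the convex $C^1$ Huber potential $\Psi$ of \eqref{eq:huber}, whose generalized Hessian is bounded by $\beta$—so that the single constant $\beta$ is legitimate across the interface $|A|=g/\beta$. The only delicate estimate is the positive-semidefiniteness and the eigenvalue bound on the singular branch $gA/|A|$.
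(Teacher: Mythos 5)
Your proposal is correct, but it takes a genuinely different route from the paper's own proof. The paper argues componentwise and by citation: each scalar component $\Theta_j$ was shown to be a semismooth Nemytskii operator in Lemma~\ref{prop:semismooth_J}, and the paper invokes \cite[Prop.~3.36]{Ulbrich} to pass from that semismoothness framework to the Lipschitz estimate $\|\Theta_j(\mathbf{u}_1)-\Theta_j(\mathbf{u}_2)\|_{L^2}\le L_j\|\mathbf{u}_1-\mathbf{u}_2\|_{\mathbf{H}_0^1}$, and then assembles the matrix estimate via $\|\Theta(\mathbf{u})\|_{\mathbb{L}^2}=\bigl(\sum_j\|\Theta_j(\mathbf{u})\|_{L^2}^2\bigr)^{1/2}$, ending with the implicit constant $\mathbf{L}=\bigl(\sum_j L_j^2\bigr)^{1/2}$. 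You instead prove the finite-dimensional pointwise bound $|\psi(A)-\psi(B)|\le\beta|A-B|$ for the Huber gradient $\psi=\nabla\Psi$ directly, by bounding every element of Clarke's generalized Jacobian uniformly by $\beta$ on both branches (including the interface $|A|=g/\beta$, where the generalized Jacobian is the convex hull of $\beta I$ and $\beta(I-\hat A\otimes\hat A)$, still of norm at most $\beta$) and applying the mean value inequality for locally Lipschitz maps; integration and the boundedness of $\mathcal{E}:\mathbf{H}^1\to\mathbb{L}^2$ then finish the argument. Your route is more elementary and self-contained, avoids the superposition-operator machinery entirely, and yields the explicit and sharper constant $\mathbf{L}=\beta C$ with $C$ the operator norm of $\mathcal{E}$ --- in fact it shows the estimate factors through $\|\mathcal{E}(\mathbf{u}_1-\mathbf{u}_2)\|_{\mathbb{L}^2}$, which is slightly stronger than the stated claim. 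What the paper's approach buys is brevity: given that the semismoothness of each $\Theta_j$ has already been established, the corollary follows in two lines from a known result, at the price of an unquantified Lipschitz constant.
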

\begin{proof}
The Lipschitz continuity of $\Theta_{j}$  for $1\leq j \leq m$ follows immediately  from its semismoothness and applying \cite[Prop. 3.36]{Ulbrich}, i.e., we have that:
\begin{equation*}
  \| \Theta_{j}(\mathbf{u_1})- \Theta_{j}(\mathbf{u_2})  \|_{L^2} \leq L_{j}\  \|\mathbf{u}_1-\mathbf{u}_2\|_{\mathbf{H}^1_0}.
  \end{equation*}
Further, since by definition of $\Theta$ we have that $\|\Theta(\mathbf{u})\|_{\mathbb{L}^2}= \displaystyle\left(\displaystyle\sum_{j=1}^m \|\Theta_{j}(\mathbf{u})\|_{L^2}^2\right)^{1/2}$ then, it follows that:
\begin{equation*}
  \| \Theta(\mathbf{u_1})- \Theta(\mathbf{u_2})  \|_{L^2} \leq \bold{L}  \|\mathbf{u}_1-\mathbf{u}_2\|_{\mathbf{H}^1_0},
  \end{equation*}
  where $\bold{L}=\displaystyle\left(\sum_{i=1}^m  L_{j}^2\right)^{1/2}$.
\end{proof}
\hspace{1mm}
\subsubsection{Generalized Second Order Derivatives}

We proceed to compute generalized second--order derivatives for $ h'_\gamma(\mathbf{u}) $ and $ J'(\mathbf{u})$. In fact, from the semismoothness of $\Phi$ and $\Theta$ we utilize $G(\mathbf{u})\mathbf{w} \in \partial^{\circ} \Phi (\mathbf{u})\mathbf{w}$ and  $K(\mathbf{u})\mathbf{w} \in \partial^{\circ} \Theta (\mathbf{u})\mathbf{w}$ to construct the generalized derivatives of $ h'_\gamma(\mathbf{u}) $ and $ J'(\mathbf{u})$. We will denote them by $\mathcal{H}(\mathbf{u})$ and $\mathcal{J}(\mathbf{u})$ respectively. For $ h'_\gamma(\mathbf{u}) $ we have that
\[
  \displaystyle \left\langle h'_\gamma(\mathbf{u}),\mathbf{v} \right\rangle_{\mathbf{H}^{-1}, \mathbf{H}^1_0} = \displaystyle  \int_{\Omega}   \Div \mathbf{v} \,\,  \Phi(\mathbf{u}) \, dx.
\]
Thus, since $G (\mathbf{u})\mathbf{w}$ is an element of  the generalized differential $\partial^{\circ} \Phi(\mathbf{u}) \mathbf{w} $, the generalized second--order derivative for $ h'_\gamma(\mathbf{u}) $, is given by 
\begin{equation}\label{eq:hpp}
\begin{array}{lll}
\left\langle \mathcal{H}(\mathbf{u})\mathbf{v},\mathbf{w} \right\rangle_{\mathbf{H}^{-1}, \mathbf{H}^1_{0}} = \displaystyle  \int_{\Omega}  \Div\mathbf{v} \,\, G(\mathbf{u})\mathbf{w} \, \, dx\vspace{0.2cm}\\\hspace{1.5cm}
=\displaystyle \sigma \int_{\Omega \setminus A_\gamma} \frac{\Div\mathbf{v} \, \Div\mathbf{w}}{|\Div\mathbf{u}|} \,\, dx\vspace{0.2cm}\\\hspace{1.5cm}- \displaystyle \sigma \int_{\Omega \setminus A_\gamma} \frac{(\Div\mathbf{u} \, \Div\mathbf{w})(\Div\mathbf{u} \, \Div\mathbf{v})}{|\Div\mathbf{u}|^3} \,\, dx 
+ \displaystyle \int_{ A_\gamma} \gamma \Div\mathbf{v} \Div\mathbf{w} \,\, dx,
\end{array}
\end{equation}
with $A_\gamma$ defined in \eqref{eq:div_active_set}. 

Regarding to $J'(\mathbf{u})$, given by \eqref{eq:Jp}, we note that it can be written as the sum of two differentiable terms and one nonsmooth given by
\[
\displaystyle g \int_{\Omega} \beta\frac{\mathcal{E}\mathbf{u} : \mathcal{E}\mathbf{v}}{\max(g, \beta|\mathcal{E}\mathbf{u}|)} \,\, dx= \displaystyle  \int_{\Omega}  \mathcal{E} \mathbf{v} : \Theta(\mathbf{u}) \,\, dx.
\]
Next, since $K (\mathbf{u}) \mathbf{w} \in \partial^{\circ} \Theta(\mathbf{u})\mathbf{w} $, we get

\begin{equation}\label{eq:Jpp}
\begin{array}{lll}
\left\langle  \mathcal{J}(\mathbf{u})\mathbf{v},\mathbf{w} \right\rangle_{\mathbf{H}^{-1}, \mathbf{H}^1_0} = \displaystyle  2\mu \int_{\Omega} \mathcal{E}\mathbf{v} : \mathcal{E}\mathbf{w} \, \,dx+\displaystyle \int_{\Omega} K(\mathbf{u})\mathbf{w} : \mathcal{E}\mathbf{v} \, \, dx \vspace{0.2cm}\\\hspace{3cm} 
=\displaystyle  2\mu \int_{\Omega} \mathcal{E}\mathbf{v} : \mathcal{E}\mathbf{w} \, \, dx+\displaystyle g \int_{\Omega \setminus E_\beta} \frac{\mathcal{E}\mathbf{v} : \mathcal{E}\mathbf{w}}{|\mathcal{E}\mathbf{u}|} \,\, dx \vspace{0.2cm}\\ \hspace{3cm} - \displaystyle g \int_{\Omega \setminus E_\beta} \frac{(\mathcal{E}\mathbf{u} : \mathcal{E}\mathbf{w})(\mathcal{E}\mathbf{u} : \mathcal{E}\mathbf{v})}{|\mathcal{E}\mathbf{u}|^3} \,\, dx 
+\beta \displaystyle \int_{ E_\beta}\, \mathcal{E}\mathbf{v}: \mathcal{E}\mathbf{w} \,\, dx.
\end{array}
\end{equation}

Taking advantage of the semismoothness properties of $J'(\mathbf{u}) + h'_{\gamma} (\mathbf{u})$, we devise a descent direction preconditioned by the second order information   $\mathcal{G}= \mathcal{J}+\mathcal{H}$. Hereafter, without risk of confusion, we omit the dependence of $\mathbf{u}$ in  $\mathcal{G}$. In addition, since $\mathcal{G}$ is a symmetric bilinear form, the descent direction, denoted in the same manner by $\mathbf{w} \in  \mathbf{H}_0^1(\Omega)$, is obtained by solving the following system:
\begin{equation*}
\left\langle \mathcal{G}\mathbf{w},\mathbf{v}\right\rangle_{\mathbf{H}^{-1}, \mathbf{H}^1_0} = -\left\langle \boldsymbol{\bar{d}},\mathbf{v}\right\rangle_{\mathbf{H}^{-1}, \mathbf{H}^1_0}, \,\, \forall \mathbf{v} \in \mathbf{H}_0^1(\Omega),
\end{equation*}
or equivalently:
\begin{equation}\label{eq:sos}
\left\langle (\mathcal{J}+\mathcal{H})\mathbf{w} ,\mathbf{v}\right\rangle_{\mathbf{H}^{-1}, \mathbf{H}^1_0} = -\left\langle \boldsymbol{\bar{d}},\mathbf{v}\right\rangle_{\mathbf{H}^{-1}, \mathbf{H}^1_0} \,  \, \forall \mathbf{v}\in \mathbf{H}_0^1.
\end{equation}
Let us recall that $\bar{\mathbf{d}}$ is given by \eqref{eq:minsubnorm}.

\begin{lema}\label{lemma:w_unique}
System \eqref{eq:sos} has a unique solution $\mathbf{w}\in \mathbf{H}_0^1(\Omega)$, which depends continuously on the descent direction $\boldsymbol{\bar{d}} \in \mathbf{H}^{-1}(\Omega)$, i.e., there is a positive constant $C$ such that
  \begin{equation}\label{eq:dep_continu}
    \|\mathbf{w}\|_{\mathbf{H}_0^1} \leq \displaystyle\frac{1}{C} \|\boldsymbol{\bar{d}}\|_{\mathbf{H}^{-1}}.
  \end{equation}
\end{lema}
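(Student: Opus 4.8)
The plan is to recognize \eqref{eq:sos} as a linear variational equation governed by the symmetric bilinear form $\mathcal{G}=\mathcal{J}+\mathcal{H}$ on $\mathbf{H}_0^1(\Omega)\times\mathbf{H}_0^1(\Omega)$, with right-hand side $-\boldsymbol{\bar{d}}\in\mathbf{H}^{-1}(\Omega)$, and to apply the Lax--Milgram theorem. Existence, uniqueness and the stability estimate \eqref{eq:dep_continu} all follow at once, provided $\mathcal{G}$ is bounded and coercive on $\mathbf{H}_0^1(\Omega)$; the constant $C$ in \eqref{eq:dep_continu} is precisely the coercivity constant. Thus the two substantive tasks are to establish boundedness and coercivity of $\mathcal{G}$ from the explicit expressions \eqref{eq:Jpp} and \eqref{eq:hpp}.

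For coercivity I would set $\mathbf{v}=\mathbf{w}$ in both forms. In $\langle\mathcal{J}(\mathbf{u})\mathbf{w},\mathbf{w}\rangle$ the two terms supported on $\Omega\setminus E_\beta$ read $g\int_{\Omega\setminus E_\beta}\tfrac{|\mathcal{E}\mathbf{w}|^2}{|\mathcal{E}\mathbf{u}|}\,dx-g\int_{\Omega\setminus E_\beta}\tfrac{(\mathcal{E}\mathbf{u}:\mathcal{E}\mathbf{w})^2}{|\mathcal{E}\mathbf{u}|^3}\,dx$, and the Cauchy--Schwarz inequality $(\mathcal{E}\mathbf{u}:\mathcal{E}\mathbf{w})^2\leq|\mathcal{E}\mathbf{u}|^2|\mathcal{E}\mathbf{w}|^2$ shows this difference is nonnegative. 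For $\langle\mathcal{H}(\mathbf{u})\mathbf{w},\mathbf{w}\rangle$ the analogous two terms on $\Omega\setminus A_\gamma$ cancel \emph{exactly}, because $\Div\mathbf{u}$ is scalar-valued and hence $(\Div\mathbf{u}\,\Div\mathbf{w})^2=(\Div\mathbf{u})^2(\Div\mathbf{w})^2$, so that $\langle\mathcal{H}(\mathbf{u})\mathbf{w},\mathbf{w}\rangle=\gamma\int_{A_\gamma}(\Div\mathbf{w})^2\,dx\geq0$. Discarding the nonnegative contributions leaves
\[
\langle\mathcal{G}\mathbf{w},\mathbf{w}\rangle\geq 2\mu\int_\Omega|\mathcal{E}\mathbf{w}|^2\,dx=2\mu\|\mathcal{E}\mathbf{w}\|_{\mathbb{L}^2}^2,
\]
and Korn's inequality then yields $\langle\mathcal{G}\mathbf{w},\mathbf{w}\rangle\geq C\|\mathbf{w}\|_{\mathbf{H}_0^1}^2$ for a constant $C>0$.

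Boundedness is more routine: the only delicate terms are the fractions with $|\mathcal{E}\mathbf{u}|$ or $|\Div\mathbf{u}|$ in the denominator, but these are integrated over the inactive sets $\Omega\setminus E_\beta$ and $\Omega\setminus A_\gamma$, on which $|\mathcal{E}\mathbf{u}|\geq g/\beta$ and $|\Div\mathbf{u}|\geq\sigma/\gamma$, respectively. Hence $1/|\mathcal{E}\mathbf{u}|\leq\beta/g$ and $1/|\Div\mathbf{u}|\leq\gamma/\sigma$ there, and the same Cauchy--Schwarz estimate used above controls the rank-one correction terms (after multiplying and dividing, the $|\mathcal{E}\mathbf{u}|^3$ and $|\Div\mathbf{u}|^3$ denominators reduce to first powers). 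Combining these pointwise bounds with the Cauchy--Schwarz inequality in $\mathbb{L}^2(\Omega)$ and the continuity of the operators $\mathcal{E}$ and $\Div$ gives $|\langle\mathcal{G}\mathbf{v},\mathbf{w}\rangle|\leq M\|\mathbf{v}\|_{\mathbf{H}_0^1}\|\mathbf{w}\|_{\mathbf{H}_0^1}$ with $M$ depending only on $\mu,\beta,\gamma,\sigma$ and the embedding constants. With boundedness and coercivity in hand, Lax--Milgram delivers the unique $\mathbf{w}\in\mathbf{H}_0^1(\Omega)$ and, testing \eqref{eq:sos} with $\mathbf{v}=\mathbf{w}$, the bound $C\|\mathbf{w}\|_{\mathbf{H}_0^1}^2\leq\langle\mathcal{G}\mathbf{w},\mathbf{w}\rangle=-\langle\boldsymbol{\bar{d}},\mathbf{w}\rangle\leq\|\boldsymbol{\bar{d}}\|_{\mathbf{H}^{-1}}\|\mathbf{w}\|_{\mathbf{H}_0^1}$, which gives \eqref{eq:dep_continu}. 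The main obstacle is the coercivity argument for the rank-one correction terms; the payoff is the observation that the Cauchy--Schwarz inequality makes the $\mathcal{J}$-contribution nonnegative and the scalar structure of $\Div$ makes the $\mathcal{H}$-contribution collapse to a manifestly nonnegative active-set term.
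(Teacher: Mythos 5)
Your proposal is correct and follows essentially the same route as the paper: both establish coercivity of $\mathcal{G}=\mathcal{J}+\mathcal{H}$ by testing with $\mathbf{v}=\mathbf{w}$ and using the Cauchy--Schwarz inequality on the rank-one correction terms (which in the scalar $\Div$ case cancel exactly) followed by Korn's inequality, and establish boundedness via the pointwise bounds $1/|\mathcal{E}\mathbf{u}|\leq\beta/g$ and $1/|\Div\mathbf{u}|\leq\gamma/\sigma$ on the inactive sets, before invoking (Babu\v{s}ka--)Lax--Milgram. The only cosmetic difference is that the paper retains the extra nonnegative terms $\beta\|\mathcal{E}\mathbf{w}\|^2_{\mathbb{L}^2(E_\beta)}$ and $\gamma\|\Div\mathbf{w}\|^2_{L^2(A_\gamma)}$ rather than discarding them, which changes nothing in the conclusion.
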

\begin{proof}
  Since $\mathcal{G} \in  \mathcal{L}(\mathbf{H}^1_0(\Omega), \mathbf{H}^{-1}(\Omega))$ the bilinear form $a(\cdot, \cdot): \mathbf{H}^1_0(\Omega) \times \mathbf{H}^1_0(\Omega) \rightarrow \mathbb{R}$, defined by $a(\mathbf{w}, \mathbf{v})= \left\langle \mathcal{G}\mathbf{w},\mathbf{v}\right\rangle_{\mathbf{H}^{-1}, \mathbf{H}^1_0}$, satisfies the hypothesis of the Babu\v{s}ca-Lax-Milgram Theorem \cite[Th. 2.1]{Babusca}. Indeed, we will prove that there exist positive constants  $C$ and $\tilde{C}$ such that, for all  $ \mathbf{v}, \mathbf{w} \in \mathbf{H}^1_0(\Omega)$, the following relations are satisfied:
  \begin{enumerate}[(i)]
    \item $a(\mathbf{w}, \mathbf{w})_{\mathbf{H}_0^1} \geq C \|\mathbf{w}\|_{\mathbf{H}_0^1}^2$
    \item $|a(\mathbf{w}, \mathbf{v})_{\mathbf{H}_0^1}| \leq \tilde{C} \|\mathbf{w}\|_{\mathbf{H}_0^1} \| \mathbf{v}\|_{\mathbf{H}_0^1}$
  \end{enumerate}

 For the first part (i), let us recall that  $E_\beta:= \left\lbrace x \in \Omega : |\mathcal{E}\mathbf{u}(x)| < \frac{g}{\beta} \right\rbrace$. Coercivity of $a$ follows from taking $\mathbf{w}=\mathbf{v}$ in \eqref{eq:Jpp}, i.e.,
  \begin{equation*}
  \begin{array}{lll}
   \left\langle  \mathcal{J}\mathbf{w},\mathbf{w}  \right\rangle_{\mathbf{H}^{-1}, \mathbf{H}^1_0} = \displaystyle  2\mu \int_{\Omega} | \mathcal{E}\mathbf{w}|^2+\displaystyle g \int_{\Omega \setminus E_\beta} \frac{| \mathcal{E}\mathbf{w}|^2 }{|\mathcal{E}\mathbf{u}|} \,\, dx \vspace{0.2cm}\\ \hspace{3cm}- \displaystyle g \int_{\Omega \setminus E_\beta} \frac{(\mathcal{E}\mathbf{u} : \mathcal{E}\mathbf{w})^2}{|\mathcal{E}\mathbf{u}|^3} \,\, dx + \displaystyle \int_{ E_\beta} \beta \, | \mathcal{E}\mathbf{w}|^2 \,\, dx.
  \end{array}
  \end{equation*}
  By applying Cauchy-Schwarz to the Frobenius product in the third term of the right hand side, we have that
  \begin{equation}\label{eq:bound1}
  \begin{array}{lll}
   \left\langle  \mathcal{J}\mathbf{w},\mathbf{w}  \right\rangle_{\mathbf{H}^{-1}, \mathbf{H}^1_0} &\geq& 2\mu  \|\mathcal{E}\mathbf{w}\|^2_{\mathbb{L}^2} + \displaystyle g \int_{\Omega \setminus E_\beta} \frac{| \mathcal{E}\mathbf{w}|^2 }{|\mathcal{E}\mathbf{u}|} \,\, dx\vspace{0.2cm}\\&& - \displaystyle g \int_{\Omega \setminus E_\beta} \frac{|\mathcal{E}\mathbf{u}|^2  |\mathcal{E}\mathbf{w}|^2}{|\mathcal{E}\mathbf{u}|^3} \,\, dx
  + \beta \displaystyle  \| \mathcal{E}\mathbf{w}_k\|^2_{\mathbb{L}^2(E_\beta)}\vspace{0.2cm}  \\
  &=& \displaystyle   2\mu \|\mathcal{E}\mathbf{w}\|^2_{\mathbb{L}^2(\Omega)} + \beta \displaystyle  \| \mathcal{E}\mathbf{w}\|^2_{\mathbb{L}^2(E_\beta)}.
  \end{array}
  \end{equation}
Further, Korn's inequality (see \cite{Kikuchi} and \cite[pp. 82]{Wilbrandt}) applied to $\|\mathcal{E}\mathbf{w}\|^2_{\mathbb{L}^2(\Omega)}$ implies that
\begin{equation}\label{eq:G_J1}
\left\langle \mathcal{J}\mathbf{w},\mathbf{w}\right\rangle_{\mathbf{H}^{-1}, \mathbf{H}^1_0} \geq C \|\mathbf{w} \|_{\mathbf{H}_0^1}^2, \,\, \forall \,  \mathbf{w} \in \mathbf{H}_0^1.
\end{equation}
Similarly, taking $\mathbf{v}=\mathbf{w}$ in \eqref{eq:hpp} yields that
\begin{equation*}
\left\langle \mathcal{H}\mathbf{w},\mathbf{w} \right\rangle_{\mathbf{H}^{-1}, \mathbf{H}^1_0} = \displaystyle \sigma \int_{\Omega \setminus A_\gamma} \frac{\Div\mathbf{w}^2}{|\Div\mathbf{u}|} \,\, dx - \displaystyle \sigma \int_{\Omega \setminus A_\gamma} \frac{(\Div\mathbf{u} \, \Div\mathbf{w})^2}{|\Div\mathbf{u}|^3} \,\, dx + \displaystyle \int_{ A_\gamma} \gamma (\Div\mathbf{w})^2.  
\end{equation*}
Where we apply Cauchy-Schwarz inequality to the inner product in the second term of the right hand side, obtaining that
\begin{equation}\label{eq:bound_2}
\begin{array}{lll}
\left\langle \mathcal{H}\mathbf{w},\mathbf{w}) \right\rangle_{\mathbf{H}^{-1}, \mathbf{H}^1_0}
&\geq &\displaystyle \sigma \int_{\Omega \setminus A_\gamma} \frac{\Div\mathbf{w}^2}{|\Div\mathbf{u}|} \,\, dx - \displaystyle \sigma \int_{\Omega \setminus A_\gamma} \frac{ |\Div \mathbf{u}|^2 \, |\Div\mathbf{w}|^2}{|\Div\mathbf{u}|^3} \,\, dx\\&& + \displaystyle \int_{ A_\gamma} \gamma (\Div\mathbf{w})^2  \\
&=&  \gamma \|\Div\mathbf{w}\|^2_{L^2(A_\gamma)}.
\end{array}
\end{equation}
This inequality, together with \eqref{eq:G_J1} imply the coercivity of the bilinear form $a$. In fact, we have that
\begin{equation}\label{eq:Jpp1}
a(\mathbf{w},\mathbf{w})_{\mathbf{H}^1_0}=\left\langle  (\mathcal{J}+ \mathcal{H})\mathbf{w},\mathbf{w}\right\rangle_{\mathbf{H}^{-1}, \mathbf{H}^1_0} \geq C \|\mathbf{w} \|_{\mathbf{H}_0^1}^2, \,\, \forall \,  \mathbf{w} \in \mathbf{H}_0^1.
\end{equation}

Continuity of $a$ follows from \eqref{eq:Jpp}. Analogous to the previous arguments, we use Cauchy-Schwarz inequality resulting  in
\begin{equation}\label{eq:aux3}
\begin{array}{lll}
\left\langle \mathcal{J}\mathbf{v},\mathbf{w}   \right\rangle_{\mathbf{H}^{-1}, \mathbf{H}^1_0} &\leq &  2\mu \|\mathcal{E}(\mathbf{v})\|_{\mathbb{L}^2(\Omega)}  \|\mathcal{E}(\mathbf{w})\|_{\mathbb{L}^2(\Omega)}+ \displaystyle g \int_{\Omega \setminus E_\beta} \frac{| \mathcal{E}\mathbf{v}|| \mathcal{E}\mathbf{w}| }{|\mathcal{E}\mathbf{u}|} \,\, dx\vspace{0.2cm}\\ &&+  \displaystyle g \int_{\Omega \setminus E_\beta} \frac{| \mathcal{E}\mathbf{u}|^2 |  \mathcal{E}\mathbf{v}||  \mathcal{E}\mathbf{w}|}{|\mathcal{E}\mathbf{u}|^3} +\beta \displaystyle \int_{ E_\beta}\, |\mathcal{E}\mathbf{v}|| \mathcal{E}\mathbf{w}| \,\, dx\vspace{0.2cm} \\
&= &   2\mu \|\mathcal{E}(\mathbf{v})\|_{\mathbb{L}^2(\Omega)}  \|\mathcal{E}(\mathbf{w})\|_{\mathbb{L}^2(\Omega)}+ \displaystyle 2g \int_{\Omega \setminus E_\beta} \frac{| \mathcal{E}\mathbf{v}| | \mathcal{E}\mathbf{w}|  }{|\mathcal{E}\mathbf{u}|} \,\, dx \\
&+& \beta \displaystyle \int_{ E_\beta}\, |\mathcal{E}\mathbf{v}|| \mathcal{E}\mathbf{w}| \,\, dx
\end{array}
\end{equation}
Observe that in the set ${\Omega \setminus E_\beta}$ we have that $ \frac{1}{ | \mathcal{E}\mathbf{u}(x)| } \leq \frac{\beta}{g}$. Thus, in view of  inequality \eqref{eq:aux3}, it follows that
\begin{equation}\label{eq:Gvw}
\begin{array}{lll}
\left\langle \mathcal{J}\mathbf{v},\mathbf{w} \right\rangle_{\mathbf{H}^{-1}, \mathbf{H}^1_0} & \leq  2\mu \|\mathcal{E}(\mathbf{v})\|_{\mathbb{L}^2(\Omega)} \|\mathcal{E}(\mathbf{w})\|_{\mathbb{L}^2(\Omega)} \vspace{0.2cm}\\\hspace{3.5cm}&+\displaystyle 2 \int_{\Omega \setminus E_{\beta}}  \beta | \mathcal{E}\mathbf{v}| | \mathcal{E}\mathbf{w}|  \,\, dx  + \displaystyle \int_{ E_\beta} \beta \, | \mathcal{E}\mathbf{v}| | \mathcal{E}\mathbf{w}|\,\, dx.\\
\hspace{3.5cm} &\leq  2\mu  \|\mathcal{E}\mathbf{v}\|_{\mathbb{L}^2(\Omega)} \|\mathcal{E}\mathbf{w}\|_{\mathbb{L}^2(\Omega)}  +\displaystyle  3 \int_{\Omega}  \beta | \mathcal{E}\mathbf{v}| | \mathcal{E}\mathbf{w}| \,\, dx. \\
\end{array}
\end{equation}
Here, we apply H\"older inequality, which results in
\begin{equation}\label{eq:aux4}
  \begin{array}{lll}
  \left\langle \mathcal{J}\mathbf{v},\mathbf{w} \right\rangle_{\mathbf{H}^{-1}, \mathbf{H}^1_0} &\leq& 
  2\mu \|\mathcal{E}\mathbf{v}\|_{\mathbb{L}^2(\Omega)} \|\mathcal{E}\mathbf{w}\|_{\mathbb{L}^2(\Omega)}\vspace{0.2cm}\\ 
  &  + &\displaystyle  3 \beta \Big( \int_{\Omega}  | \mathcal{E}\mathbf{v}|^2 \Big)^{1/2} \Big( \int_{\Omega} | \mathcal{E}\mathbf{w}|^2\Big)^{1/2}  \,\, dx\vspace{0.2cm}\\
  &=&  ( 2\mu+3\beta)  \|\mathcal{E}\mathbf{v}\|_{\mathbb{L}^2(\Omega)} \|\mathcal{E}\mathbf{w}\|_{\mathbb{L}^2(\Omega)}, \vspace{0.2cm}\\
 &\leq&  C_1  \|\mathbf{v}\|_{\mathbf{H}_0^1} \|\mathbf{w}\|_{\mathbf{H}_0^1(\Omega)},\\
  \end{array}
  \end{equation}
where $C_1$ depends on $\mu$, $\beta$ and the positive constant of the continuity property of the linear operator $\mathcal{E}$. We follow a similar procedure for the term $\left\langle \mathcal{H}\mathbf{v},\mathbf{w} \right\rangle  $. By applying Cauchy-Schwarz inequality to the inner products we estimate:
\begin{equation*}
\begin{array}{lll}
\left\langle \mathcal{H}\mathbf{v},\mathbf{w} \right\rangle_{\mathbf{H}^{-1}, \mathbf{H}^1_0} =\displaystyle \sigma \int_{\Omega \setminus A_\gamma} \frac{|\Div\mathbf{v}||\Div\mathbf{w}|}{|\Div\mathbf{u}|} \,\, dx + \displaystyle \sigma \int_{\Omega \setminus A_\gamma} \frac{|\Div \mathbf{u}|^2 \, |\Div\mathbf{v}||\Div\mathbf{w}|}{|\Div\mathbf{u}|^3} \,\, dx\vspace{0.2cm}\\\hspace{3.5cm} + \gamma\displaystyle \int_{ A_\gamma} |\Div\mathbf{v}| |\Div\mathbf{w}| \vspace{0.2cm}\\\hspace{3.5cm}
\leq  \displaystyle 2 \sigma \int_{\Omega \setminus A_\gamma}  \frac{|\Div\mathbf{v}| |\Div\mathbf{w}|}{|\Div\mathbf{u}|} \,\, dx +\gamma \displaystyle \int_{ A_\gamma} |\Div\mathbf{v}||\Div\mathbf{w}| dx \vspace{0.2cm}\\\hspace{3.5cm}
\leq   \displaystyle 3 \gamma \int_{\Omega}  |\Div\mathbf{v }||\Div\mathbf{w}| \,\, dx \\
\end{array}
\end{equation*}
Once again, we apply H\"older inequality to get:
\begin{equation}\label{eq:aux5}
  \begin{array}{lll}
  \left\langle \mathcal{H}\mathbf{v},\mathbf{w} \right\rangle_{\mathbf{H}^{-1}, \mathbf{H}^1_0} &\leq& 3 \gamma   \|\Div\mathbf{v}\|_{L^2}\|\Div\mathbf{w}\|_{L^2} \\
  &\leq& C_2  \|\mathbf{v}\|_{\mathbf{H}_0^1}  \|\mathbf{w}\|_{\mathbf{H}_0^1}
\end{array}
\end{equation}
Here, $C_2$ also depends on $\gamma$ and the continuity property of the divergence operator.

Finally, from the symmetry of $a$, equations \eqref{eq:aux4}, \eqref{eq:aux5} and taking $\tilde{C}=\max\{C_1,C_2\}$ we have that
\begin{equation}\label{eq:G_csh}
  a(\mathbf{w},\mathbf{v})_{\mathbf{H}^1_0}=\left\langle \mathcal{G} \mathbf{w},\mathbf{v}\right\rangle_{\mathbf{H}^{-1}, \mathbf{H}^1_0}  \leq \tilde{C}  \|\mathbf{w}\|_{\mathbf{H}_0^1} \|\mathbf{v}\|_{\mathbf{H}_0^1}, \,\, \forall \, \mathbf{w}, \mathbf{v} \in \mathbf{H}_0^1.
\end{equation}
Then, by the Bab\v{u}ska-Lax-Milgram Theorem there exist a unique solution $\mathbf{w} \in \mathbf{H}_0^1(\Omega)$ of the system \eqref{eq:sos}. Moreover,  $\mathbf{w}$ depends continuously on the descent direction.
\end{proof}
From the previous Lemma, we can establish the following useful property of the second order term $\mathcal{G}$.
\begin{coro}\label{prop:coercivity}
 $\mathcal{G}$ satisfy the following pair of bounds:
  \begin{equation}\label{eq:coercivity}
   C \|\mathbf{w}\|_{\mathbf{H}_0^1}^2 \leq \left\langle  \mathcal{G} \mathbf{w},\mathbf{w}\right\rangle_{\mathbf{H}^{-1}, \mathbf{H}^1_0}  \leq \tilde{C}  \|\mathbf{w}\|_{\mathbf{H}_0^1}^2.
  \end{equation}
\end{coro}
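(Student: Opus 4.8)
The plan is to recognize that both inequalities in \eqref{eq:coercivity} have in effect already been established in the course of proving Lemma \ref{lemma:w_unique}, so the corollary amounts to specializing those estimates to the diagonal $\mathbf{v}=\mathbf{w}$ and collecting them into a single two-sided bound. No new analytic machinery is required.

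For the lower bound I would appeal directly to the coercivity estimate derived in the proof of the previous lemma. There, setting $\mathbf{v}=\mathbf{w}$ in \eqref{eq:Jpp} and applying Cauchy--Schwarz to the Frobenius product showed that the negative term in $\left\langle\mathcal{J}\mathbf{w},\mathbf{w}\right\rangle_{\mathbf{H}^{-1},\mathbf{H}^1_0}$ is dominated by the corresponding positive contribution on $\Omega\setminus E_\beta$, leaving $2\mu\|\mathcal{E}\mathbf{w}\|_{\mathbb{L}^2(\Omega)}^2+\beta\|\mathcal{E}\mathbf{w}\|_{\mathbb{L}^2(E_\beta)}^2$, and Korn's inequality then yielded $\left\langle\mathcal{J}\mathbf{w},\mathbf{w}\right\rangle_{\mathbf{H}^{-1},\mathbf{H}^1_0}\geq C\|\mathbf{w}\|_{\mathbf{H}_0^1}^2$. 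The analogous computation for $\mathcal{H}$, again via Cauchy--Schwarz, gave $\left\langle\mathcal{H}\mathbf{w},\mathbf{w}\right\rangle_{\mathbf{H}^{-1},\mathbf{H}^1_0}\geq\gamma\|\Div\mathbf{w}\|_{L^2(A_\gamma)}^2\geq 0$. Since $\mathcal{G}=\mathcal{J}+\mathcal{H}$, adding these two nonnegative contributions delivers $\left\langle\mathcal{G}\mathbf{w},\mathbf{w}\right\rangle_{\mathbf{H}^{-1},\mathbf{H}^1_0}\geq C\|\mathbf{w}\|_{\mathbf{H}_0^1}^2$, the left inequality of \eqref{eq:coercivity}.

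For the upper bound I would simply take $\mathbf{v}=\mathbf{w}$ in the continuity estimate \eqref{eq:G_csh} of Lemma \ref{lemma:w_unique}. That estimate, obtained by bounding the two pairings through Cauchy--Schwarz, the pointwise inequality $1/|\mathcal{E}\mathbf{u}(x)|\leq\beta/g$ on $\Omega\setminus E_\beta$, and Hölder's inequality, reads $\left\langle\mathcal{G}\mathbf{w},\mathbf{v}\right\rangle_{\mathbf{H}^{-1},\mathbf{H}^1_0}\leq\tilde{C}\|\mathbf{w}\|_{\mathbf{H}_0^1}\|\mathbf{v}\|_{\mathbf{H}_0^1}$ with $\tilde{C}=\max\{C_1,C_2\}$. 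Choosing $\mathbf{v}=\mathbf{w}$ produces $\left\langle\mathcal{G}\mathbf{w},\mathbf{w}\right\rangle_{\mathbf{H}^{-1},\mathbf{H}^1_0}\leq\tilde{C}\|\mathbf{w}\|_{\mathbf{H}_0^1}^2$, which is the right inequality of \eqref{eq:coercivity}.

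Because every ingredient was already developed for Lemma \ref{lemma:w_unique}, there is no genuine obstacle; the only care needed is bookkeeping, namely reusing the same constants $C$ and $\tilde{C}$ from the coercivity and continuity estimates. It is worth emphasizing that the lower bound rests essentially on the surviving $2\mu\|\mathcal{E}\mathbf{w}\|_{\mathbb{L}^2(\Omega)}^2$ term after the indefinite middle terms are controlled, while the $\mathcal{H}$-part contributes only a nonnegative (and possibly degenerate) amount and is therefore not needed for coercivity.
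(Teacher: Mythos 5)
Your proposal is correct and follows essentially the same route as the paper: the lower bound is exactly the coercivity estimate \eqref{eq:Jpp1} already established in the proof of Lemma \ref{lemma:w_unique} (combining \eqref{eq:G_J1} for $\mathcal{J}$ with the nonnegative $\mathcal{H}$-contribution \eqref{eq:bound_2}), and the upper bound is obtained by taking $\mathbf{v}=\mathbf{w}$ in the continuity estimate \eqref{eq:G_csh}. The paper's own proof is just a one-line citation of these two facts, so your recapitulation of their derivation is harmless but not needed.
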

\begin{proof}
  The result follows from \eqref{eq:Jpp1} and taking $ \mathbf{v}=\mathbf{w}$ in \eqref{eq:G_csh}.
\end{proof}

\section{Exact Penalization Algorithm}
Having discussed the main properties of the generalized derivatives of the objective functional,  we  introduce the following second--order algorithm for solving the \textit{exact penalization} formulation \eqref{minexact} numerically.
\begin{algorithm}[H]
\caption{Exact Penalization Algorithm - Preliminar version}\label{alg:v1}
\begin{algorithmic}[1]
\State Initialize $\mathbf{u}_0$ such that $\Div \mathbf{u}_0=0$ and set $k=0$.
\State Until \textit{stopping criteria} is true :
\State Compute  $\mathbf{d}$ by solving problem  \eqref{eq:minnorm_final}.
\State Compute descent direction $\mathbf{w}_k$ by solving  system \eqref{eq:sos}.
\State Execute line--search to get $\alpha_k$.
\State Update $\mathbf{u}_{k+1}:=\mathbf{u}_k + \alpha_k \mathbf{w}_k$, and set $k=k+1$.
\end{algorithmic}
\end{algorithm}

From Algorithm \ref{alg:v1} we can infer some useful properties of the approximated solution of problem \eqref{minexact}.

\begin{teo}\label{teo:divw0}
Let $\mathbf{u}_k$ and $\mathbf{w}_k$ be the  approximated solution and descent direction at the $k$-th iteration of Algorithm \ref{alg:v1}. If $\mathbf{u}_k$ satisfies $\Div\mathbf{u}_k=0$, then
\begin{equation}\label{eq:bound_gamma}
   \| \Div \mathbf{w}_k\|^2_{L^2(\Omega)} \leq \frac{1}{\gamma} \frac{\|\boldsymbol{\bar{d}}_k\|^2_{\mathbf{H}^{-1}}}{C}.
\end{equation}
\end{teo}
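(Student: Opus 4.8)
The plan is to exploit the feasibility hypothesis $\Div\mathbf{u}_k=0$ to collapse the second--order term $\mathcal{H}$, and then to test the defining system \eqref{eq:sos} against $\mathbf{w}_k$ itself. First I would observe that when $\Div\mathbf{u}_k=0$ a.e. in $\Omega$, the $\Div$--active set defined in \eqref{eq:div_active_set} becomes the whole domain, i.e. $A_\gamma=\Omega$, so that $\Omega\setminus A_\gamma=\emptyset$. Consequently the two singular integrals over $\Omega\setminus A_\gamma$ in \eqref{eq:hpp} vanish, and the bilinear form reduces to
\[
\left\langle \mathcal{H}\mathbf{w}_k,\mathbf{w}_k \right\rangle_{\mathbf{H}^{-1}, \mathbf{H}^1_0} = \gamma \int_\Omega (\Div\mathbf{w}_k)^2\,dx = \gamma\,\|\Div\mathbf{w}_k\|^2_{L^2(\Omega)}.
\]

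Next I would set $\mathbf{v}=\mathbf{w}_k$ in system \eqref{eq:sos}, which together with the reduction above yields
\[
\left\langle \mathcal{J}\mathbf{w}_k,\mathbf{w}_k \right\rangle_{\mathbf{H}^{-1}, \mathbf{H}^1_0} + \gamma\,\|\Div\mathbf{w}_k\|^2_{L^2(\Omega)} = -\left\langle \boldsymbol{\bar{d}}_k,\mathbf{w}_k \right\rangle_{\mathbf{H}^{-1}, \mathbf{H}^1_0}.
\]
By inequality \eqref{eq:bound1} the term $\left\langle \mathcal{J}\mathbf{w}_k,\mathbf{w}_k \right\rangle_{\mathbf{H}^{-1}, \mathbf{H}^1_0}$ is non--negative, so I can drop it and apply the duality (Cauchy--Schwarz) estimate to obtain
\[
\gamma\,\|\Div\mathbf{w}_k\|^2_{L^2(\Omega)} \leq -\left\langle \boldsymbol{\bar{d}}_k,\mathbf{w}_k \right\rangle_{\mathbf{H}^{-1}, \mathbf{H}^1_0} \leq \|\boldsymbol{\bar{d}}_k\|_{\mathbf{H}^{-1}}\,\|\mathbf{w}_k\|_{\mathbf{H}_0^1}.
\]

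Finally I would invoke the continuous dependence bound \eqref{eq:dep_continu} from Lemma \ref{lemma:w_unique}, namely $\|\mathbf{w}_k\|_{\mathbf{H}_0^1}\leq \frac{1}{C}\|\boldsymbol{\bar{d}}_k\|_{\mathbf{H}^{-1}}$, to replace the $\mathbf{H}_0^1$--norm of $\mathbf{w}_k$ and arrive at $\gamma\,\|\Div\mathbf{w}_k\|^2_{L^2(\Omega)}\leq \frac{1}{C}\|\boldsymbol{\bar{d}}_k\|^2_{\mathbf{H}^{-1}}$; dividing by $\gamma$ gives exactly \eqref{eq:bound_gamma}. The only genuinely substantive step is the first one: recognizing that the feasibility assumption $\Div\mathbf{u}_k=0$ forces the active set to equal all of $\Omega$ and thereby eliminates the singular part of $\mathcal{H}$, leaving a clean $\gamma\,\|\Div\mathbf{w}_k\|^2_{L^2(\Omega)}$ that can be isolated on the left--hand side. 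Everything afterward is a routine non--negativity/duality chain already assembled inside the proof of Lemma \ref{lemma:w_unique}.
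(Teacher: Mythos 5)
Your proposal is correct and follows essentially the same route as the paper's proof: use $\Div\mathbf{u}_k=0$ to get $A_\gamma=\Omega$ so that $\langle\mathcal{H}\mathbf{w}_k,\mathbf{w}_k\rangle$ reduces to $\gamma\|\Div\mathbf{w}_k\|^2_{L^2}$, test \eqref{eq:sos} with $\mathbf{v}=\mathbf{w}_k$, estimate the right-hand side by duality, and close with the bound \eqref{eq:dep_continu} from Lemma \ref{lemma:w_unique}. The only cosmetic difference is that you discard $\langle\mathcal{J}\mathbf{w}_k,\mathbf{w}_k\rangle$ using its plain non-negativity from \eqref{eq:bound1}, whereas the paper retains the coercivity term $-C\|\mathbf{w}_k\|^2_{\mathbf{H}_0^1}$ from \eqref{eq:G_J1} and drops it only at the end; both yield the identical estimate.
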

\begin{proof}
In this proof we shall use portions of the proof of Lemma \ref{lemma:w_unique}. Let us take $\mathbf{w}=\mathbf{w}_k$ in \eqref{eq:bound_2}. Since, by hypothesis,  $\Div \mathbf{u}_k =0$ at the $k$-th iteration, it follows $A^k_\gamma=\left\lbrace x \in \Omega : |\Div \mathbf{u}_k(x)| < \frac{\sigma}{\gamma} \right\rbrace=\Omega$. Hence, \eqref{eq:bound_2} yields that
\begin{equation}\label{eq:bound2}
\begin{array}{rl}
\left\langle \mathcal{H}\mathbf{w}_k,\mathbf{w}_k \right\rangle_{\mathbf{H}^{-1}, \mathbf{H}^1_0} \geq & \gamma \|\Div \mathbf{w}_k\|^2_{L^2(A_\gamma)} = \gamma \|\Div \mathbf{w}_k\|^2_{L^2(\Omega)} .
\end{array}
\end{equation}
By collecting \eqref{eq:bound2}  and \eqref{eq:G_J1},  and using \eqref{eq:sos}  with  $\mathbf{w}=\mathbf{w}_k$, we obtain that:
\begin{equation*}
\begin{array}{rl}
 \gamma \| \Div \mathbf{w}_k\|^2_{L^2(\Omega)}\leq & \left\langle  \mathcal{H}\mathbf{w}_k,\mathbf{w}_k  \right\rangle_{\mathbf{H}^{-1}, \mathbf{H}^1_0}\\
 =& - \left\langle \boldsymbol{\bar{d}}_k,\mathbf{w}_k\right\rangle_{\mathbf{H}^{-1}, \mathbf{H}^1_0} - \left\langle  \mathcal{J}\mathbf{w}_k,\mathbf{w}_k  \right\rangle_{\mathbf{H}^{-1}, \mathbf{H}^1_0} \\
 \leq & \|\boldsymbol{\bar{d}}_k\|_{\mathbf{H}^{-1}}\|\mathbf{w}_k\|_{\mathbf{H}^1_0} - C \|\mathbf{w}_k\|^2_{\mathbf{H}_0^1} .
\end{array}
\end{equation*}
In addition, from \eqref{eq:dep_continu} we deduce that:
\begin{equation*}
  \begin{array}{rl}
   \gamma \| \Div \mathbf{w}_k\|^2_{L^2(\Omega)}\leq& \displaystyle \frac{1}{C}\|\boldsymbol{\bar{d}}_k\|^2_{\mathbf{H}^{-1}} - C \|\mathbf{w}_k\|^2_{\mathbf{H}_0^1}
   \vspace{2mm}\\
   \leq& \displaystyle \frac{1}{C}\|\boldsymbol{\bar{d}}_k\|^2_{\mathbf{H}^{-1}}.
  \end{array}
  \end{equation*}
Dividing by $\gamma$ both sides we get the desired result:
\begin{equation*}
0 \leq \| \Div \mathbf{w}_k\|^2_{L^2(\Omega)} \leq \frac{1}{\gamma} \frac{\|\boldsymbol{\bar{d}}_k\|^2_{\mathbf{H}^{-1}}}{C}.
\end{equation*}
\end{proof}

\begin{coro}\label{cor:largegamma}
  If $\gamma \rightarrow \infty$ then,  $\Div \mathbf{w}_k=0$. In addition, by the updating step-6 of Algorithm \ref{alg:v1}, it follows that the next iterate, $\mathbf{u}_{k+1}$, fulfills the divergence condition, i.e., $\Div \mathbf{u}_{k+1} = \Div \mathbf{u}_{k} + \alpha_k \Div \mathbf{w}_{k}=0$.
\end{coro}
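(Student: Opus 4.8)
The plan is to obtain the conclusion as a direct consequence of the quantitative estimate \eqref{eq:bound_gamma} already established in Theorem~\ref{teo:divw0}. That theorem operates under the standing hypothesis $\Div\mathbf{u}_k=0$, so the bound
\begin{equation*}
\|\Div\mathbf{w}_k\|^2_{L^2(\Omega)} \leq \frac{1}{\gamma}\,\frac{\|\boldsymbol{\bar{d}}_k\|^2_{\mathbf{H}^{-1}}}{C}
\end{equation*}
is available with no extra work. The essential observation---and the only point I would take care to justify explicitly---is that both the constant $C$ and the norm $\|\boldsymbol{\bar{d}}_k\|_{\mathbf{H}^{-1}}$ are independent of the Huber parameter $\gamma$: the constant $C$ arises from the $\mathcal{J}$-coercivity bound \eqref{eq:G_J1}, which follows from Korn's inequality and depends only on $\mu$ and $\beta$, while $\boldsymbol{\bar{d}}_k$ is the minimum-norm subgradient defined in \eqref{eq:minsubnorm} through the exact subdifferential $\partial J_\sigma$, in which $\gamma$ never appears. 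Consequently the right-hand side is of order $\mathcal{O}(1/\gamma)$ at the current iterate.

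With this in hand, I would let $\gamma\to\infty$ in the displayed estimate. Since the factor $\|\boldsymbol{\bar{d}}_k\|^2_{\mathbf{H}^{-1}}/C$ is a fixed finite quantity, the upper bound tends to zero, forcing $\|\Div\mathbf{w}_k\|_{L^2(\Omega)}=0$; that is, $\Div\mathbf{w}_k=0$ almost everywhere in $\Omega$. I would read the statement ``$\gamma\to\infty$ implies $\Div\mathbf{w}_k=0$'' in precisely this limiting sense.

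Finally, the claim on the updated iterate follows from the linearity of the divergence operator applied to the update rule in step~6 of Algorithm~\ref{alg:v1}:
\begin{equation*}
\Div\mathbf{u}_{k+1}=\Div(\mathbf{u}_k+\alpha_k\mathbf{w}_k)=\Div\mathbf{u}_k+\alpha_k\Div\mathbf{w}_k=0,
\end{equation*}
where the last equality combines the hypothesis $\Div\mathbf{u}_k=0$ with $\Div\mathbf{w}_k=0$ just obtained. I do not expect any serious obstacle here; the entire content of the corollary is the $\gamma$-independence of $C$ and $\boldsymbol{\bar{d}}_k$, after which both assertions are immediate.
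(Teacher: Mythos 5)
Your proposal is correct and follows essentially the same route as the paper, which states the corollary as an immediate consequence of Theorem \ref{teo:divw0} and its bound \eqref{eq:bound_gamma}, combined with the linearity of $\Div$ in the update step. Your explicit verification that the constant $C$ (from the Korn-based coercivity bound \eqref{eq:G_J1}) and the minimum-norm subgradient $\boldsymbol{\bar{d}}_k$ from \eqref{eq:minsubnorm} are independent of $\gamma$ is exactly the point the paper leaves implicit, and your limiting reading of ``$\gamma\rightarrow\infty$ implies $\Div\mathbf{w}_k=0$'' matches the paper's own interpretation, as reflected in the remark following the corollary that in practice one only obtains $\Div\mathbf{w}_k\approx 0$ for large finite $\gamma$.
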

In practice, by choosig $\gamma$ large  enough we have that at each $k$-th iteration, $\Div \mathbf{w}_k \leq \epsilon \approx 0$. Inequality \eqref{eq:bound_gamma} suggest taking $\gamma > \frac{1}{C} \|\boldsymbol{\bar{d}}_k\|^2_{\mathbf{H}^{-1}} $
\begin{remark}\label{rem:d_simplified}
Theorem \ref{teo:divw0} allows us to simplify the computation of the steepest descent direction. Indeed, there is no need to solve problem \eqref{eq:minsubnorm}. Instead of using the steepest descent direction $\boldsymbol{\bar{d}}$ in Algorithm \ref{alg:v1}-step 3, Corollary \ref{cor:largegamma} justifies a simpler computation of the descent direction: using the regular part $J(u)$ only. In fact, at each iteration $k$ we are minimizing our functional in the divergence free space $V$, i.e., the approximated solution $\mathbf{u}_k$ remains in  $V$ thanks to the direction $\mathbf{w}_k$ satisfy $\Div \mathbf{w}_k \approx 0 $. Henceforth, system \eqref{eq:sos} becomes:
\begin{equation}\label{eq:sos_v2}
  \left\langle \mathcal{G}\mathbf{w}_k,\mathbf{v}_k\right\rangle_{\mathbf{H}^{-1}, \mathbf{H}^1_0} = \left\langle -J'(\mathbf{u}_k),\mathbf{v}_k\right\rangle_{\mathbf{H}^{-1}, \mathbf{H}^1_0}, \,  \, \forall \mathbf{v}_k \in \mathbf{H}_0^1(\Omega).
\end{equation}
\end{remark}
Thanks to Corollary \ref{cor:largegamma} and the last Remark we propose the following modification to the previous version of the \textit{exact penalization} Algorithm:
\begin{algorithm}[H]
\caption{Exact Penalization Algorithm}\label{alg:v2}
\begin{algorithmic}[1]
\State Initialize $\mathbf{u}_0$ such that $\Div\mathbf{u}_0=0$ and set $k=0$.
\State Until \textit{stopping criteria} is true:
\State Compute the derivative of the regular part  $  -J'(\mathbf{u}_k)$ given by \eqref{eq:Jp}.
\State Compute descent direction $\mathbf{w}_k$ by solving the system:  $\left\langle \mathcal{G}\mathbf{w}_k,\mathbf{v}_k\right\rangle_{\mathbf{H}^{-1}, \mathbf{H}^1_0} = \left\langle -J'(\mathbf{u}_k),\mathbf{v}_k\right\rangle_{\mathbf{H}^{-1}, \mathbf{H}^1_0}, \,  \, \forall \mathbf{v}_k \in \mathbf{H}_0^1(\Omega)$.
\State Execute line-search to get $\alpha_k$.
\State Update $\mathbf{u}_{k+1}:=\mathbf{u}_k + \alpha_k \mathbf{w}_k$, and set $k=k+1$.
\end{algorithmic}
\end{algorithm}

\subsection{Line--search routine}\label{rem:line_search}
  The algorithm stops when the absolute value of the dual pair of the regular part's gradient and the descent direction drops below a given tolerance, i.e., $|\langle J'(\mathbf{u}_k),\mathbf{w}_k\rangle|$ serves as a descent indicator and stopping criteria when it is approximately zero.

  Additionally, the election of the step length $\alpha$ is key to guarantee the sufficient decrease. In the \textit{Exact  Penalization} Algorithm \ref{alg:v2}- step 5 we use a line search technique which exploits polynomial models of the objective function for backtracking. This stepsize reduction approach was proposed in \cite[Sec. 6.3.2]{Dennis}. If a stepsize does not satisfy the sufficient decrease condition, the next candidate will be constrained in an interval that depends on the previous stepsize. Hence, we have: 
  \begin{equation*}
  \alpha_k \in [c_l \alpha_{prev} , c_u   \alpha_{prev}], \,\,\, \text{for } \,\, k=0, \cdots 
  \end{equation*}
  where $c_l$ and $c_u$ are positive constants  and $\alpha_{prev}$ stands for the previous step length value. In general, it is mandatory to construct stepsizes that are bounded away from zero \cite[Sec. 3.2]{Kelly}.

To finish this section, we state the following result with some convergence related quantities. This is a direct consequence of the argumentation in Remark \ref{rem:d_simplified} and Corollary \ref{prop:coercivity}.

\begin{coro}\label{cor:1}
Let $\mathbf{w}_k$ satisfy \eqref{eq:sos_v2}. Then the following  inequalities hold:
\begin{enumerate}[(i)]
	\item $C \|\mathbf{w}_k\|_{\mathbf{H}_0^1}^2 \leq - \left\langle J'(\mathbf{u}_k), \mathbf{w}_k \right\rangle_{\mathbf{H}^{-1}, \mathbf{H}^1_0} \leq  \tilde{C}  \|\mathbf{w}_k\|_{\mathbf{H}_0^1}^2,$\label{eq:cor11}
	\item $\left\langle J'(\mathbf{u}_k), \mathbf{w}_k \right\rangle_{\mathbf{H}^{-1}, \mathbf{H}^1_0} < 0,$\label{eq:cor12}
	\item $C \|\mathbf{w}_k\|_{\mathbf{H}_0^1} \leq \|J'(\mathbf{u}_k) \|_{\mathbf{H}^{-1}}.$\label{eq:cor13}
\end{enumerate}
\end{coro}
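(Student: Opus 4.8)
The plan is to feed the test function $\mathbf{v}_k = \mathbf{w}_k$ into the defining system \eqref{eq:sos_v2} and then invoke the two--sided bound from Corollary \ref{prop:coercivity}. All three inequalities follow from this single substitution together with the Cauchy--Schwarz estimate for the duality pairing, so the argument is short.

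First I would establish \eqref{eq:cor11}. Setting $\mathbf{v}_k = \mathbf{w}_k$ in \eqref{eq:sos_v2} gives the identity
\[
\left\langle \mathcal{G}\mathbf{w}_k, \mathbf{w}_k\right\rangle_{\mathbf{H}^{-1}, \mathbf{H}^1_0} = -\left\langle J'(\mathbf{u}_k), \mathbf{w}_k\right\rangle_{\mathbf{H}^{-1}, \mathbf{H}^1_0}.
\]
Substituting this into the bound \eqref{eq:coercivity} of Corollary \ref{prop:coercivity} immediately yields
\[
C\|\mathbf{w}_k\|_{\mathbf{H}_0^1}^2 \leq -\left\langle J'(\mathbf{u}_k), \mathbf{w}_k\right\rangle_{\mathbf{H}^{-1}, \mathbf{H}^1_0} \leq \tilde{C}\,\|\mathbf{w}_k\|_{\mathbf{H}_0^1}^2,
\]
which is precisely \eqref{eq:cor11}.

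Claim \eqref{eq:cor12} then follows from the lower bound just obtained: since $C>0$, we have $-\langle J'(\mathbf{u}_k), \mathbf{w}_k\rangle_{\mathbf{H}^{-1},\mathbf{H}^1_0} \geq C\|\mathbf{w}_k\|_{\mathbf{H}_0^1}^2 > 0$ whenever $\mathbf{w}_k \neq 0$, giving the strict descent inequality. The only point requiring care is the nondegeneracy $\mathbf{w}_k \neq 0$: by the coercivity of $\mathcal{G}$, system \eqref{eq:sos_v2} forces $\mathbf{w}_k = 0$ exactly when $J'(\mathbf{u}_k)=0$, that is, when $\mathbf{u}_k$ already solves the problem; hence the strict inequality holds at every non--stationary iterate, which is the relevant situation for the algorithm. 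For \eqref{eq:cor13} I would bound the middle term of \eqref{eq:cor11} from above by the duality estimate $-\langle J'(\mathbf{u}_k), \mathbf{w}_k\rangle_{\mathbf{H}^{-1},\mathbf{H}^1_0} \leq \|J'(\mathbf{u}_k)\|_{\mathbf{H}^{-1}}\,\|\mathbf{w}_k\|_{\mathbf{H}_0^1}$, so that $C\|\mathbf{w}_k\|_{\mathbf{H}_0^1}^2 \leq \|J'(\mathbf{u}_k)\|_{\mathbf{H}^{-1}}\,\|\mathbf{w}_k\|_{\mathbf{H}_0^1}$; dividing by $\|\mathbf{w}_k\|_{\mathbf{H}_0^1}$ (the case $\mathbf{w}_k=0$ being trivial) delivers the desired estimate.

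There is no serious obstacle, as the statement is essentially a corollary of the coercivity and boundedness of $\mathcal{G}$ established in Lemma \ref{lemma:w_unique} and packaged in Corollary \ref{prop:coercivity}. The most delicate bookkeeping is simply tracking the sign when passing from $\langle \mathcal{G}\mathbf{w}_k,\mathbf{w}_k\rangle$ to $-\langle J'(\mathbf{u}_k),\mathbf{w}_k\rangle$, and noting that the strict inequality in \eqref{eq:cor12} together with the division in \eqref{eq:cor13} implicitly excludes the trivial case $\mathbf{w}_k=0$, at which the algorithm has already converged.
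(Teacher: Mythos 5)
Your proof is correct and follows exactly the paper's argument: take $\mathbf{v}_k=\mathbf{w}_k$ in \eqref{eq:sos_v2} and apply the two--sided bound of Corollary \ref{prop:coercivity}, with the duality estimate and division by $\|\mathbf{w}_k\|_{\mathbf{H}_0^1}$ handling \eqref{eq:cor13}. Your explicit remarks on the nondegenerate case $\mathbf{w}_k\neq 0$ only make precise what the paper's one--line proof leaves implicit.
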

\begin{proof}
  This follows simply by taking $\mathbf{v}_k=\mathbf{w}_k$  in \eqref{eq:sos_v2} and from Corollary \ref{prop:coercivity}.
\end{proof}
\section{Convergence Analysis}
In this section we focus in the convergence analysis of the \textit{exact penalization} algorithm introduced in the previous section. Our analysis is based on the descent properties of the solution of the second--order system \eqref{eq:sos} and the sparse divergence induced by means of the second--order information. We show  convergence to the solution of the exact penalized problem, henceforth the solution of the original constrained problem.

A crucial argument in our discussion below is the Lipschitz continuity of the first order derivative, which led us to a general sufficient decrease condition. Further, the computation of the stepsizes by backtracking assures the decreasing in the cost functional $J_{\sigma}$. Finally, we prove strong convergence of the sequence  $\left\lbrace \mathbf{u}_k \right\rbrace_{k \in \mathbb{N}}$ generated by the \textit{exact penalization}-Algorithm \ref{alg:v2}.

With the help of the previous results, we infer the following lemmas relative to the regularity of functional $J$.
\begin{lema}\label{lem:Jp_is_Lipschitz}
  The first order derivative $ J'(\mathbf{u}): \mathbf{H}_0^1 \rightarrow \mathbb{R}$ is  Lipschitz continuous, i.e.,
  \begin{equation}\label{eq:aux100}
    \begin{array}{rl}
    \left\langle J'(\mathbf{u}_1) - J'(\mathbf{u}_2),\mathbf{h}\right\rangle_{\mathbf{H}^{-1}, \mathbf{H}^1_0} \leq&  L  \|\mathbf{u}_1 - \mathbf{u}_2\|_{\mathbf{H}^1_0} \|\mathbf{h}\|_{\mathbf{H}^1_0}. 
  \end{array}
  \end{equation}
\end{lema}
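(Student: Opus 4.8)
The plan is to exploit the explicit form of the Fr\'echet derivative \eqref{eq:Jp}. First I would subtract $J'(\mathbf{u}_2)$ from $J'(\mathbf{u}_1)$ and pair the difference with $\mathbf{h}$; the load term $\int_\Omega \mathbf{f}\cdot\mathbf{h}\,dx$ is independent of $\mathbf{u}$ and therefore cancels. What remains splits naturally into a linear viscous contribution
\[
2\mu\int_\Omega \mathcal{E}(\mathbf{u}_1-\mathbf{u}_2):\mathcal{E}\mathbf{h}\,dx
\]
and the genuinely nonsmooth contribution carrying the $\max$-function, which is precisely $\int_\Omega (\Theta(\mathbf{u}_1)-\Theta(\mathbf{u}_2)):\mathcal{E}\mathbf{h}\,dx$, where $\Theta$ is the operator introduced in Lemma \ref{prop:semismooth_J}.

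For the viscous term I would apply the Cauchy--Schwarz inequality with respect to the Frobenius product in $\mathbb{L}^2(\Omega)$, followed by the continuity (boundedness) of the symmetric gradient operator $\mathcal{E}:\mathbf{H}_0^1(\Omega)\to\mathbb{L}^2(\Omega)$, which yields a bound of the form $C\|\mathbf{u}_1-\mathbf{u}_2\|_{\mathbf{H}_0^1}\|\mathbf{h}\|_{\mathbf{H}_0^1}$. This step is routine and poses no difficulty.

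The delicate part is the nonsmooth term, but the heavy lifting has already been carried out. Applying Cauchy--Schwarz in $\mathbb{L}^2(\Omega)$ gives
\[
\int_\Omega \left(\Theta(\mathbf{u}_1)-\Theta(\mathbf{u}_2)\right):\mathcal{E}\mathbf{h}\,dx \leq \|\Theta(\mathbf{u}_1)-\Theta(\mathbf{u}_2)\|_{\mathbb{L}^2}\,\|\mathcal{E}\mathbf{h}\|_{\mathbb{L}^2},
\]
and then Corollary \ref{cor:lipsch}---the Lipschitz continuity of $\Theta$ with constant $\mathbf{L}$, itself a consequence of the semismoothness established in Lemma \ref{prop:semismooth_J} together with \cite[Prop. 3.36]{Ulbrich}---bounds $\|\Theta(\mathbf{u}_1)-\Theta(\mathbf{u}_2)\|_{\mathbb{L}^2}$ by $\mathbf{L}\|\mathbf{u}_1-\mathbf{u}_2\|_{\mathbf{H}^1}$.

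Combining both estimates and using once more the continuity of $\mathcal{E}$ on the remaining factor $\|\mathcal{E}\mathbf{h}\|_{\mathbb{L}^2}$, I would collect the constants into a single $L$ depending on $\mu$, $\mathbf{L}$ and the continuity constant of $\mathcal{E}$, which yields \eqref{eq:aux100}. The only genuine obstacle---controlling the variation of the quotient $\mathcal{E}\mathbf{u}/\max(g,\beta|\mathcal{E}\mathbf{u}|)$ across the active and inactive sets---was already resolved in proving that $\Theta$ is Lipschitz, so at this stage it suffices to invoke that corollary rather than re-examine the four pointwise cases.
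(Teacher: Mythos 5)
Your proposal is correct and follows essentially the same route as the paper's proof: decompose $J'(\mathbf{u}_1)-J'(\mathbf{u}_2)$ into the linear viscous part and the $\Theta$-difference, apply Cauchy--Schwarz in $\mathbb{L}^2$, invoke Corollary \ref{cor:lipsch} for the nonsmooth term, and absorb the continuity constant of $\mathcal{E}$ into $L$. If anything, your write-up is slightly more explicit than the paper's (you keep the $2\mu$ factor and note the cancellation of the load term), but the argument is the same.
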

\begin{proof}
  Recall the definition of $\Theta$ in Lemma \ref{prop:semismooth_J}, taking into accoun that $\displaystyle  \int_{\Omega} \mathcal{E}\mathbf{u} : \mathcal{E}\mathbf{u} \, dx= \|\mathcal{E}\mathbf{u}\|^2_{\mathbb{L}^2}$, we get 
\begin{equation*}
 \displaystyle g \beta \int_{\Omega} \frac{\mathcal{E}\mathbf{u} : \mathcal{E}\mathbf{h}}{\max(g, \beta|\mathcal{E}\mathbf{u}|)} \,\, dx \leq \|\Theta (\mathbf{u})\|_{\mathbb{L}^2}\|  \mathcal{E}\mathbf{h} \|_{\mathbb{L}^2}.
\end{equation*}
Further, from the computation of $J'$ given by \eqref{eq:Jp} we infer that:
\begin{equation*}\label{eq:aux100}
  \begin{array}{lll}
  \left\langle J'(\mathbf{u}_1) - J'(\mathbf{u}_2),\mathbf{h}\right\rangle_{\mathbf{H}^{-1}, \mathbf{H}^1_0} \leq \|\mathcal{E}(\mathbf{u}_1 - \mathbf{u}_1)\|_{\mathbb{L}^2} \|\mathcal{E}(\mathbf{h})\|_{\mathbb{L}^2} \vspace{0.2cm}\\\hspace{6cm} +  \| \Theta (\mathbf{u}_1) - \Theta (\mathbf{u}_2)\|_{\mathbb{L}^2} \|\mathcal{E}(\mathbf{h})\|_{\mathbb{L}^2}.\\
\end{array}
\end{equation*}
Due to the Lipschitz continuity of $\Theta$, see Corollary \ref{cor:lipsch}, we obtain that: 
\begin{equation*}
  \begin{array}{rl}
  \left\langle J'(\mathbf{u}_1) - J'(\mathbf{u}_2),\mathbf{h}\right\rangle_{\mathbf{H}^{-1}, \mathbf{H}^1_0} \leq&  L  \|\mathbf{u}_1 - \mathbf{u}_2\|_{\mathbf{H}^1_0} \|\mathbf{h}\|_{\mathbf{H}^1_0}. 
\end{array}
\end{equation*}
Here,  $L=\tilde{c} \, \mathbf{L}$ and $\tilde{c}$ depends on the constant associated to the continuos operator $\mathcal{E}$. 
\end{proof}
\begin{lema}\label{lem:descen_lemma} 
For each $k$-th iterate and $\alpha>0$, $J$ satisfies the following inequality
\begin{equation*}\label{eq:descent_lemma}
J(\mathbf{u}_k + \alpha \mathbf{w}_k) \leq J(\mathbf{u}_k) +  \alpha \left\langle J'(\mathbf{u}_k),\mathbf{w}_k\right\rangle_{\mathbf{H}^{-1}, \mathbf{H}^1_0} + \alpha^2  \frac{L}{2} \|\mathbf{w}_k\|_{H_0^1}^2.
\end{equation*}
\end{lema}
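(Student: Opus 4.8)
The plan is to establish this as the classical \emph{descent lemma}, which holds for any functional whose Fr\'echet derivative is Lipschitz continuous. The only nontrivial ingredient, namely the Lipschitz continuity of $J'$, is already available from Lemma \ref{lem:Jp_is_Lipschitz}, so the argument reduces to a one-dimensional integration along the segment joining $\mathbf{u}_k$ to $\mathbf{u}_k + \alpha\mathbf{w}_k$.

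First I would introduce the scalar auxiliary function $\varphi(t):=J(\mathbf{u}_k + t\mathbf{w}_k)$ for $t\in[0,\alpha]$. Since $J$ is Fr\'echet differentiable with derivative \eqref{eq:Jp}, the chain rule yields $\varphi'(t)=\left\langle J'(\mathbf{u}_k + t\mathbf{w}_k),\mathbf{w}_k\right\rangle_{\mathbf{H}^{-1},\mathbf{H}^1_0}$, and $\varphi$ is absolutely continuous. Invoking the fundamental theorem of calculus gives
\begin{equation*}
J(\mathbf{u}_k + \alpha\mathbf{w}_k) - J(\mathbf{u}_k) = \int_0^\alpha \left\langle J'(\mathbf{u}_k + t\mathbf{w}_k),\mathbf{w}_k\right\rangle_{\mathbf{H}^{-1},\mathbf{H}^1_0}\,dt.
\end{equation*}
The next step is to add and subtract the fixed term $\left\langle J'(\mathbf{u}_k),\mathbf{w}_k\right\rangle$ inside the integrand, splitting the right-hand side into a constant contribution, which integrates to $\alpha\left\langle J'(\mathbf{u}_k),\mathbf{w}_k\right\rangle_{\mathbf{H}^{-1},\mathbf{H}^1_0}$, and a remainder $\int_0^\alpha \left\langle J'(\mathbf{u}_k + t\mathbf{w}_k)-J'(\mathbf{u}_k),\mathbf{w}_k\right\rangle_{\mathbf{H}^{-1},\mathbf{H}^1_0}\,dt$.

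To bound the remainder I would apply Lemma \ref{lem:Jp_is_Lipschitz} with $\mathbf{u}_1=\mathbf{u}_k+t\mathbf{w}_k$, $\mathbf{u}_2=\mathbf{u}_k$ and $\mathbf{h}=\mathbf{w}_k$, which gives the pointwise estimate $\left\langle J'(\mathbf{u}_k + t\mathbf{w}_k)-J'(\mathbf{u}_k),\mathbf{w}_k\right\rangle \leq L\,\|t\mathbf{w}_k\|_{\mathbf{H}^1_0}\|\mathbf{w}_k\|_{\mathbf{H}^1_0} = L\,t\,\|\mathbf{w}_k\|_{\mathbf{H}^1_0}^2$. Integrating this bound over $[0,\alpha]$ produces the factor $\int_0^\alpha t\,dt = \alpha^2/2$, and collecting the two contributions yields exactly the claimed inequality. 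I do not anticipate a genuine obstacle here: the statement is a routine consequence of the integral mean value argument once $J\in C^1$ and $J'$ Lipschitz are in hand. The only point requiring a word of care is the justification that $\varphi$ is differentiable with the stated derivative and that the integral representation is valid, which follows from the Fr\'echet differentiability of $J$ established earlier; everything else is an elementary estimate.
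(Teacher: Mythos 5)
Your proposal is correct and follows essentially the same route as the paper's proof: both use the integral (fundamental theorem of calculus) representation of $J(\mathbf{u}_k+\alpha\mathbf{w}_k)-J(\mathbf{u}_k)$, split off the constant term $\alpha\left\langle J'(\mathbf{u}_k),\mathbf{w}_k\right\rangle_{\mathbf{H}^{-1},\mathbf{H}^1_0}$, and bound the remainder via the Lipschitz continuity of $J'$ from Lemma \ref{lem:Jp_is_Lipschitz}. The only cosmetic difference is that you integrate over $[0,\alpha]$ in the direction $\mathbf{w}_k$, while the paper integrates over $[0,1]$ with increment $\mathbf{h}=\alpha\mathbf{w}_k$; these are the same argument up to a change of variables.
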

\begin{proof}
Since the functional $J$ is Fr\'echet differentiable, the Mean Value Theorem implies that
\begin{equation*}
\begin{array}{lll}
J(\mathbf{u}_2) - J(\mathbf{u}_1)=  \displaystyle \int_0^1 \left\langle  J'(\mathbf{u}_1),\mathbf{h}\right\rangle_{\mathbf{H}^{-1}, \mathbf{H}^1_0} \,\, dt\vspace{0.2cm}\\\displaystyle \hspace{5cm} + \int_0^1 \left\langle J'(\mathbf{u}_{1} + t\mathbf{h}) - J'(\mathbf{u}_1),\mathbf{h}\right\rangle_{\mathbf{H}^{-1}, \mathbf{H}^1_0} \,\, dt. \\
\end{array}
\end{equation*}
With $\mathbf{h}=\mathbf{u}_2-\mathbf{u}_1$. Next, by applying Lemma \ref{lem:Jp_is_Lipschitz}, we get the following relation
\begin{equation}\label{eq:aux7}
\begin{array}{rl}
J(\mathbf{u}_2) - J(\mathbf{u}_1) \leq & \displaystyle \left\langle  J'(\mathbf{u}_1),\mathbf{h}\right\rangle_{\mathbf{H}^{-1}, \mathbf{H}^1_0} +  \int_0^1 L t \|\mathbf{h}\|_{\mathbf{H}_0^1}^2 \,\, dt \\
= & \displaystyle \left\langle  J'(\mathbf{u}_1),\mathbf{h}\right\rangle_{\mathbf{H}^{-1}, \mathbf{H}^1_0} +   \frac{L}{2} \|\mathbf{h}\|_{\mathbf{H}_0^1}^2.\\
\end{array}
\end{equation}
By taking  $\mathbf{u}_1=\mathbf{u}_k$ and $\mathbf{u}_2=\mathbf{u}_k+\alpha \mathbf{w}_k$ in \eqref{eq:aux7} we obtain the desired inequality.

\end{proof}
\begin{lema}\label{prop:decrease}
 Let $ \, \alpha >0$ be sufficiently small. Then, the general sufficient decrease condition is satisfied. i.e.,
  \begin{equation}\label{eq:suff_decrease}
    \begin{array}{rl}
    J(\mathbf{u}_k + \alpha \mathbf{w}_k) -\displaystyle  J(\mathbf{u}_k) < &- c_1 \alpha \|\nabla J(\mathbf{u}_k)\|^2_{{\mathbf{H}^1_0}}, \,\, \,\text{ for } c_1 >0.
    \end{array}
    \end{equation}
\end{lema}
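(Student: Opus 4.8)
The plan is to combine the quadratic upper bound for $J$ from Lemma~\ref{lem:descen_lemma} with the two–sided estimates on the descent direction collected in Corollary~\ref{cor:1} and the continuity of $\mathcal{G}$ recorded in \eqref{eq:G_csh}. Throughout I read $\|\nabla J(\mathbf{u}_k)\|_{\mathbf{H}^1_0}$ as the Riesz representative of $J'(\mathbf{u}_k)$, so that $\|\nabla J(\mathbf{u}_k)\|_{\mathbf{H}^1_0}=\|J'(\mathbf{u}_k)\|_{\mathbf{H}^{-1}}$, and the target inequality \eqref{eq:suff_decrease} reads $J(\mathbf{u}_k+\alpha\mathbf{w}_k)-J(\mathbf{u}_k)<-c_1\alpha\|J'(\mathbf{u}_k)\|_{\mathbf{H}^{-1}}^2$.

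First I would instantiate Lemma~\ref{lem:descen_lemma} to obtain $J(\mathbf{u}_k+\alpha\mathbf{w}_k)-J(\mathbf{u}_k)\le \alpha\langle J'(\mathbf{u}_k),\mathbf{w}_k\rangle_{\mathbf{H}^{-1},\mathbf{H}^1_0}+\tfrac{\alpha^2 L}{2}\|\mathbf{w}_k\|_{\mathbf{H}^1_0}^2$. The linear term drives the descent: Corollary~\ref{cor:1}(i) gives $\langle J'(\mathbf{u}_k),\mathbf{w}_k\rangle\le -C\|\mathbf{w}_k\|_{\mathbf{H}^1_0}^2$, so the right-hand side is dominated by $-\alpha\bigl(C-\tfrac{\alpha L}{2}\bigr)\|\mathbf{w}_k\|_{\mathbf{H}^1_0}^2$. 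Restricting the step to $\alpha\le C/L$ forces $C-\tfrac{\alpha L}{2}\ge C/2>0$, which already yields the intermediate estimate $J(\mathbf{u}_k+\alpha\mathbf{w}_k)-J(\mathbf{u}_k)\le -\tfrac{\alpha C}{2}\|\mathbf{w}_k\|_{\mathbf{H}^1_0}^2$, strictly negative whenever $\mathbf{w}_k\ne 0$.

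It remains to convert $\|\mathbf{w}_k\|_{\mathbf{H}^1_0}^2$ into the gradient norm, and this is the only step that is not immediate. Note that Corollary~\ref{cor:1}(iii) supplies the \emph{wrong-direction} bound $C\|\mathbf{w}_k\|\le\|J'(\mathbf{u}_k)\|$, whereas here I need a lower bound on $\|\mathbf{w}_k\|$. I would obtain it from the defining identity $\mathcal{G}\mathbf{w}_k=-J'(\mathbf{u}_k)$ in $\mathbf{H}^{-1}(\Omega)$ together with the continuity constant $\tilde{C}$ of $\mathcal{G}$: for every $\mathbf{v}\in\mathbf{H}^1_0(\Omega)$ one has $|\langle J'(\mathbf{u}_k),\mathbf{v}\rangle|=|\langle\mathcal{G}\mathbf{w}_k,\mathbf{v}\rangle|\le \tilde{C}\|\mathbf{w}_k\|_{\mathbf{H}^1_0}\|\mathbf{v}\|_{\mathbf{H}^1_0}$ by \eqref{eq:G_csh}, whence $\|J'(\mathbf{u}_k)\|_{\mathbf{H}^{-1}}\le \tilde{C}\|\mathbf{w}_k\|_{\mathbf{H}^1_0}$, i.e. $\|\mathbf{w}_k\|_{\mathbf{H}^1_0}^2\ge \tilde{C}^{-2}\|J'(\mathbf{u}_k)\|_{\mathbf{H}^{-1}}^2$. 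Substituting this into the intermediate estimate produces $J(\mathbf{u}_k+\alpha\mathbf{w}_k)-J(\mathbf{u}_k)\le -\tfrac{\alpha C}{2\tilde{C}^2}\|J'(\mathbf{u}_k)\|_{\mathbf{H}^{-1}}^2$, so that \eqref{eq:suff_decrease} holds with $c_1:=C/(2\tilde{C}^2)$.

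Finally I would record the qualifications on strictness. The bound is strict exactly at non-stationary iterates: if $J'(\mathbf{u}_k)\ne 0$ then $\mathbf{w}_k\ne 0$ by Corollary~\ref{cor:1}(ii), and one secures the strict sign either by choosing $\alpha<2C/L$ strictly, so that $C-\tfrac{\alpha L}{2}>C/2$, or by taking $c_1$ marginally below $C/(2\tilde{C}^2)$; at a stationary iterate $J'(\mathbf{u}_k)=0$ the algorithm has already terminated through the stopping criterion of Section~\ref{rem:line_search}. I expect the sole genuine obstacle to be the direction of the comparison between $\|\mathbf{w}_k\|$ and $\|J'(\mathbf{u}_k)\|$: coercivity of $\mathcal{G}$ alone controls $\langle J'(\mathbf{u}_k),\mathbf{w}_k\rangle$ from above but not $\|\mathbf{w}_k\|$ from below, so the upper boundedness constant $\tilde{C}$ must be invoked, which is precisely where the two-sided nature of the bounds in Corollary~\ref{prop:coercivity} enters.
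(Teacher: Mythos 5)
Your proof is correct and follows essentially the same path as the paper's: the descent bound of Lemma \ref{lem:descen_lemma}, the coercivity estimate of Corollary \ref{cor:1}~(i), and—crucially—the continuity constant $\tilde{C}$ of $\mathcal{G}$ to obtain the lower bound $\|\mathbf{w}_k\|_{\mathbf{H}_0^1}\geq \tilde{C}^{-1}\|J'(\mathbf{u}_k)\|_{\mathbf{H}^{-1}}$, which is exactly the step the paper performs by testing \eqref{eq:sos_v2} with the Riesz representative $\nabla J(\mathbf{u}_k)$ and invoking \eqref{eq:G_csh}. The only cosmetic difference is bookkeeping: you carry $\|\mathbf{w}_k\|^2_{\mathbf{H}_0^1}$ through the estimate and convert to the gradient norm once at the end (so Corollary \ref{cor:1}~(iii) is never needed), whereas the paper converts the linear and quadratic terms separately and therefore arrives at the step restriction $\alpha\leq 2C^2/(L\tilde{C}^2)$ in place of your $\alpha\leq C/L$.
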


\begin{proof}
  From the Riesz representation theorem  \cite[Remark 2.44]{Combettes}, we have that there exists a unique vector $\nabla J(\mathbf{u}_k) \in \mathbf{H}_0^1 $ such that $\left\langle J'(\mathbf{u}_k),\mathbf{v}\right\rangle_{\mathbf{H}^{-1}, \mathbf{H}^1_0} = ( \nabla J(\mathbf{u}_k),\mathbf{v})_{\mathbf{H}_0^1}$,  for all $\mathbf{v} \in \mathbf{H}_0^1 $. Thus, by taking $\mathbf{v}=\nabla J(\mathbf{u}_k) $ in  \eqref{eq:sos_v2}, we obtain that:
\begin{equation*}
\begin{array}{rl}
\|\nabla J(\mathbf{u}_k)\|^2_{\mathbf{H}_0^1} = & \left\langle  \mathcal{G}\nabla J(\mathbf{u}_k),\mathbf{w}_k\right\rangle_{\mathbf{H}^{-1}, \mathbf{H}^1_0}. \\
\end{array}
\end{equation*}
Recalling that $\left\langle  \mathcal{G}\nabla J(\mathbf{u}_k),\mathbf{w}_k\right\rangle_{\mathbf{H}^{-1}, \mathbf{H}^1_0}$ is a symmetric bilinear form, from \eqref{eq:G_csh} we get that $a(\nabla J(\mathbf{u}_k),\mathbf{w}_k)_{\mathbf{H}^1_0}=\left\langle  \mathcal{G}\nabla J(\mathbf{u}_k),\mathbf{w}_k\right\rangle_{\mathbf{H}^{-1}, \mathbf{H}^1_0}$. Thus, we have that:
\begin{equation*}
\begin{array}{rl}
\|\nabla J(\mathbf{u}_k)\|^2_{\mathbf{H}_0^1} = & a(\nabla J(\mathbf{u}_k),\mathbf{w}_k )_{\mathbf{H}^1_0}\\
\leq & \tilde{C} \|\mathbf{w}_k\|_{\mathbf{H}_0^1} \|\nabla J(\mathbf{u}_k)\|_{\mathbf{H}_0^1}.
\end{array}
\end{equation*}
Clearly, if $\|\nabla J(\mathbf{u}_k)\|_{\mathbf{H}_0^1} \neq 0$, then Corollary \ref{cor:1}-\eqref{eq:cor11} implies that

\begin{equation}\label{eq:aux10.a}
\begin{array}{rl}
\|\nabla J(\mathbf{u}_k)\|^2_{\mathbf{H}_0^1} \leq & \tilde{C}^2 \| \mathbf{w}_k\|^2_{\mathbf{H}^{1}_0}\\
\leq & -\displaystyle \frac{\tilde{C}^2}{C} \left\langle J'(\mathbf{u}_k), \mathbf{w}_k \right\rangle_{\mathbf{H}^{-1}, \mathbf{H}^1_0}.
\end{array}
\end{equation}
In addition, by Lemma \ref{lem:descen_lemma} and \eqref{eq:aux10.a}, the following inequality holds:
\begin{equation*}
\begin{array}{rl}  
J(\mathbf{u}_k + \alpha \mathbf{w}_k) \leq & \displaystyle J(\mathbf{u}_k) -\alpha \frac{C}{\tilde{C}^2} \|\nabla J(\mathbf{u}_k)\|^2_{\mathbf{H}_0^1} + \alpha^2  \frac{L}{2} \|\mathbf{w}_k\|_{\mathbf{H}_0^1}^2. \\
\end{array}
\end{equation*}
Next, from Corollary \ref{cor:1}-\eqref{eq:cor13} we have that
\begin{equation*}
\begin{array}{rl}
J(\mathbf{u}_k + \alpha \mathbf{w}_k) \leq & \displaystyle  J(\mathbf{u}_k) -\alpha \frac{C}{\tilde{C}^2} \|\nabla J(\mathbf{u}_k)\|^2_{{\mathbf{H}^1_0}} + \alpha^2  \frac{L}{2C} \|\nabla J(\mathbf{u}_k)\|^2_{{\mathbf{H}^1_0}}\vspace{0.2cm}\\
  = &\displaystyle  J(\mathbf{u}_k) -\alpha \left(\frac{C}{\tilde{C}^2} -  \frac{\alpha L}{2C}\right) \|\nabla J(\mathbf{u}_k)\|^2_{{\mathbf{H}^1_0}}.
\end{array}
\end{equation*}
Thus, by choosing
\begin{equation}\label{eq:alpha_bound}
  0< \displaystyle  \alpha  \leq \frac{2C^2}{L\tilde{C}^2},
\end{equation}
we get $\big(\frac{C}{\tilde{C}^2} -  \frac{\alpha L}{2C}\big) >0$. Then, the result follows by taking $c_1 \leq \big(\frac{C}{\tilde{C}^2} -  \frac{\alpha L}{2C}\big)$.
\end{proof}

\begin{coro}\label{lema:f_prima}
The Exact Penalization Algorithm \ref{alg:v2}, with stepsize $\alpha_k $ satisfying \eqref{eq:suff_decrease}, generates a sequence $\{\nabla J(\mathbf{u}_k)\}_{k \in \mathbb{N}}$, such that $\displaystyle \lim_{k \rightarrow \infty}  \nabla J(\mathbf{u}_k)= 0$.
\end{coro}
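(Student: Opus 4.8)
The plan is to run the classical telescoping argument for descent methods, combining the sufficient decrease condition of Lemma \ref{prop:decrease} with the boundedness below of $J$ and a uniform lower bound on the accepted stepsizes. First I would invoke the sufficient decrease inequality \eqref{eq:suff_decrease} at the $k$-th iterate with the stepsize $\alpha_k$ produced by the line search, which gives
\[
J(\mathbf{u}_{k+1}) = J(\mathbf{u}_k + \alpha_k \mathbf{w}_k) < J(\mathbf{u}_k) - c_1 \alpha_k \|\nabla J(\mathbf{u}_k)\|^2_{\mathbf{H}^1_0}.
\]
In particular $\{J(\mathbf{u}_k)\}_{k\in\mathbb{N}}$ is strictly decreasing. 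Since $J$ is convex, continuous and coercive on $\mathbf{H}_0^1(\Omega)$ (as recalled in Section \ref{sec:1}), it is bounded below, so the monotone sequence $\{J(\mathbf{u}_k)\}$ converges to a finite limit $J_{\ast}$.

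Next I would rearrange and sum. Writing $c_1 \alpha_k \|\nabla J(\mathbf{u}_k)\|^2_{\mathbf{H}^1_0} < J(\mathbf{u}_k) - J(\mathbf{u}_{k+1})$ and adding over $k=0,\dots,N$, the right-hand side telescopes and yields
\[
c_1 \sum_{k=0}^{N} \alpha_k \|\nabla J(\mathbf{u}_k)\|^2_{\mathbf{H}^1_0} < J(\mathbf{u}_0) - J(\mathbf{u}_{N+1}) \leq J(\mathbf{u}_0) - J_{\ast} < \infty.
\]
Letting $N\rightarrow\infty$ shows that the nonnegative series $\sum_k \alpha_k \|\nabla J(\mathbf{u}_k)\|^2_{\mathbf{H}^1_0}$ converges, whence its general term tends to zero, i.e. $\alpha_k \|\nabla J(\mathbf{u}_k)\|^2_{\mathbf{H}^1_0} \rightarrow 0$.

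The remaining and most delicate step is to strip off the factor $\alpha_k$, which requires a uniform lower bound $\alpha_k \geq \underline{\alpha} > 0$. Here I would appeal to the explicit threshold in the descent estimate: equation \eqref{eq:alpha_bound} shows that every $\alpha \in \big(0, \tfrac{2C^2}{L\tilde{C}^2}\big]$ already produces sufficient decrease, while the backtracking routine of Section \ref{rem:line_search} keeps its candidate stepsizes in the interval $[c_l\,\alpha_{prev}, c_u\,\alpha_{prev}]$ and is therefore bounded away from zero. Consequently the line search must accept a stepsize no smaller than a fixed positive constant $\underline{\alpha}$ depending only on $c_l$, $L$, $C$ and $\tilde{C}$. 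This is the main obstacle, since the argument hinges on the line search never driving the step to zero; the threshold \eqref{eq:alpha_bound} together with the bounded reduction factors is precisely what guarantees this. With $\alpha_k \geq \underline{\alpha}$ in hand, the estimate $\underline{\alpha}\,\|\nabla J(\mathbf{u}_k)\|^2_{\mathbf{H}^1_0} \leq \alpha_k \|\nabla J(\mathbf{u}_k)\|^2_{\mathbf{H}^1_0} \rightarrow 0$ forces $\|\nabla J(\mathbf{u}_k)\|_{\mathbf{H}^1_0} \rightarrow 0$, which is the desired conclusion.
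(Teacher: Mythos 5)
Your proof is correct and follows essentially the same route as the paper: both arguments derive a uniform positive lower bound on the accepted stepsizes from the threshold \eqref{eq:alpha_bound} together with the backtracking interval of Section \ref{rem:line_search}, use the sufficient decrease condition \eqref{eq:suff_decrease} and boundedness below of $J$ to obtain convergence of $\{J(\mathbf{u}_k)\}$, and then conclude $\|\nabla J(\mathbf{u}_k)\|_{\mathbf{H}_0^1}\rightarrow 0$. The only cosmetic difference is that you sum the telescoping series $\sum_k \alpha_k\|\nabla J(\mathbf{u}_k)\|^2_{\mathbf{H}_0^1}$ and invoke that its general term vanishes, whereas the paper passes directly to the limit in $c_1\bar{\alpha}\|\nabla J(\mathbf{u}_k)\|^2_{\mathbf{H}_0^1}\leq J(\mathbf{u}_k)-J(\mathbf{u}_{k+1})$; the two steps are equivalent.
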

\begin{proof}
In view of subsection \ref{rem:line_search} and \eqref{eq:alpha_bound} we have that $\alpha_{prev} >  \frac{2C^2}{L\tilde{C}^2}$. Then, the stepsizes $\alpha_k$ generated by the Exact Penalization Algorithm \ref{alg:v2} satisfy 
\begin{equation}\label{eq:alpha_bounded_below}
  \alpha_k >  \bar{\alpha}= \frac{l 2C^2}{L\tilde{C}^2}, \,\,\, \text{for } \,\, k=0, \cdots 
\end{equation}
Next, \eqref{eq:suff_decrease} implies that  $J(\mathbf{u}_k)$ is a decreasing sequence and, as pointed out in the definition of $J$, it is bounded from below, then
\begin{equation}\label{eq:J_convergente}
  \displaystyle \lim_{k \rightarrow \infty} J(\mathbf{u}_k)=J^*.
\end{equation}
Collecting  \eqref{eq:alpha_bounded_below}, \eqref{eq:J_convergente} and by taking the limit $k \rightarrow \infty$ on both sides of inequality \eqref {eq:suff_decrease}, we infer that:
\begin{equation*}
\begin{array}{lll}
 \displaystyle \lim_{k \rightarrow \infty} c_1 \bar{\alpha} \|\nabla J(\mathbf{u}_k)\|^2_{{\mathbf{H}^1_0}} \leq &   \displaystyle \lim_{k \rightarrow \infty} \left[\displaystyle  J(\mathbf{u}_k) - J(\mathbf{u}_{k+1}) \right]=& 0.
\end{array}
\end{equation*}
Thus, $ \displaystyle \lim_{k \rightarrow \infty} \|\nabla J(\mathbf{u}_k)\|_{\mathbf{H}_0^1}=0$. 
\end{proof}

\begin{teo}
The sequence $\left\lbrace \mathbf{u}_k \right\rbrace_{k \in \mathbb{N}}$ generated by Algorithm \ref{alg:v2} converges to the minimizer $\bar{\mathbf{u}}$ of $J_{\sigma}$.
\end{teo}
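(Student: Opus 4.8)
The plan is to combine the first-order stationarity already secured in Corollary \ref{lema:f_prima}, namely $\nabla J(\mathbf{u}_k)\to 0$ in $\mathbf{H}_0^1(\Omega)$, with the convexity structure of $J$ and the exact-penalization equivalence of Theorem \ref{teo:exact2}. First I would record that, in the regime where $\Div\mathbf{w}_k=0$ (Corollary \ref{cor:largegamma}), every iterate stays in the divergence-free space $V$: since $\Div\mathbf{u}_0=0$ and $\mathbf{u}_{k+1}=\mathbf{u}_k+\alpha_k\mathbf{w}_k$, induction gives $\Div\mathbf{u}_k=0$ for all $k$. In particular $\mathbf{u}_k-\bar{\mathbf{u}}\in V$, because Theorem \ref{teo:exact2} provides $\bar{\mathbf{u}}=\tilde{\mathbf{u}}$ with $\Div\bar{\mathbf{u}}=0$ for $\sigma\ge\sigma_0$; thus on the iterates $J_\sigma$ reduces to $J$ and $\bar{\mathbf{u}}$ is precisely the minimizer of $J$ over $V$.

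Next I would establish boundedness of the iterates. The sufficient-decrease estimate of Lemma \ref{prop:decrease} makes $\{J(\mathbf{u}_k)\}$ nonincreasing, so $J(\mathbf{u}_k)\le J(\mathbf{u}_0)$ for all $k$; coercivity of $J$ then bounds $\|\mathbf{u}_k\|_{\mathbf{H}_0^1}$ uniformly, whence $\|\mathbf{u}_k-\bar{\mathbf{u}}\|_{\mathbf{H}_0^1}\le M$ for some $M>0$. Using convexity and differentiability of $J$ together with the global minimality of $\bar{\mathbf{u}}$ over $V$,
\begin{equation*}
0\le J(\mathbf{u}_k)-J(\bar{\mathbf{u}})\le \left\langle J'(\mathbf{u}_k),\mathbf{u}_k-\bar{\mathbf{u}}\right\rangle_{\mathbf{H}^{-1},\mathbf{H}^1_0}\le \|J'(\mathbf{u}_k)\|_{\mathbf{H}^{-1}}\,\|\mathbf{u}_k-\bar{\mathbf{u}}\|_{\mathbf{H}_0^1}.
\end{equation*}
Since $\|J'(\mathbf{u}_k)\|_{\mathbf{H}^{-1}}=\|\nabla J(\mathbf{u}_k)\|_{\mathbf{H}_0^1}\to 0$ by Corollary \ref{lema:f_prima} and the last factor is bounded by $M$, I conclude $J(\mathbf{u}_k)\to J(\bar{\mathbf{u}})$.

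Finally I would upgrade this convergence of values to convergence of the iterates via strong convexity. The quadratic term $\mu\int_\Omega \mathcal{E}\mathbf{u}:\mathcal{E}\mathbf{u}\,dx$ combined with Korn's inequality makes $J$ strongly convex with some modulus $m>0$, giving
\begin{equation*}
J(\mathbf{u}_k)\ge J(\bar{\mathbf{u}})+\left\langle J'(\bar{\mathbf{u}}),\mathbf{u}_k-\bar{\mathbf{u}}\right\rangle_{\mathbf{H}^{-1},\mathbf{H}^1_0}+\frac{m}{2}\|\mathbf{u}_k-\bar{\mathbf{u}}\|_{\mathbf{H}_0^1}^2 .
\end{equation*}
Because $\bar{\mathbf{u}}$ minimizes $J$ over $V$ and $\mathbf{u}_k-\bar{\mathbf{u}}\in V$, the first-order term vanishes, so $\tfrac{m}{2}\|\mathbf{u}_k-\bar{\mathbf{u}}\|_{\mathbf{H}_0^1}^2\le J(\mathbf{u}_k)-J(\bar{\mathbf{u}})\to 0$, which yields $\mathbf{u}_k\to\bar{\mathbf{u}}$ in $\mathbf{H}_0^1(\Omega)$.

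The main obstacle I anticipate is the bookkeeping of the divergence-free property: Corollary \ref{cor:largegamma} delivers $\Div\mathbf{w}_k=0$ only in the limit $\gamma\to\infty$, whereas Theorem \ref{teo:divw0} gives the quantitative bound $\|\Div\mathbf{w}_k\|_{L^2}^2\le \|\boldsymbol{\bar{d}}_k\|_{\mathbf{H}^{-1}}^2/(\gamma C)$ for finite $\gamma$. To keep the argument fully rigorous one must either work in the idealized regime so that $\langle J'(\bar{\mathbf{u}}),\mathbf{u}_k-\bar{\mathbf{u}}\rangle$ cancels exactly, or carry an $O(\gamma^{-1/2})$ perturbation of the feasibility through the two convexity estimates and absorb it; extracting the strong-convexity modulus $m$ cleanly from the $\mu$-term and Korn's inequality is the other step requiring care.
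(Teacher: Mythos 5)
Your proposal is correct, and it shares the paper's skeleton only in the first half: both arguments start from Corollary \ref{cor:largegamma} to keep the iterates in $V$ (so that $J_\sigma(\mathbf{u}_k)=J(\mathbf{u}_k)$ and $\bar{\mathbf{u}}=\tilde{\mathbf{u}}$ is the minimizer of $J$ over $V$), use coercivity plus the monotone decrease of $J(\mathbf{u}_k)$ for boundedness, and feed in $\nabla J(\mathbf{u}_k)\to 0$ from Corollary \ref{lema:f_prima}. After that the routes diverge. The paper extracts a weakly convergent subsequence, shows via the convexity inequality that every weak limit point minimizes $J$, identifies it with $\bar{\mathbf{u}}$ by uniqueness and weak closedness of $V$, and then upgrades to strong convergence by splitting $J=\tilde{J}+\tilde{G}$, expanding the quadratic part $\tilde{J}$ exactly, and running a $\limsup$ argument that invokes the weak lower semicontinuity of $\tilde{G}$. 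You instead avoid weak topology entirely: the gradient inequality $J(\mathbf{u}_k)-J(\bar{\mathbf{u}})\le \langle J'(\mathbf{u}_k),\mathbf{u}_k-\bar{\mathbf{u}}\rangle$ gives convergence of values, and then strong convexity of $J$ (the $\mu$-quadratic term plus Korn, with the Huber term merely convex) together with the subspace optimality condition $\langle J'(\bar{\mathbf{u}}),\mathbf{v}\rangle=0$ for all $\mathbf{v}\in V$ yields $\tfrac{m}{2}\|\mathbf{u}_k-\bar{\mathbf{u}}\|^2_{\mathbf{H}_0^1}\le J(\mathbf{u}_k)-J(\bar{\mathbf{u}})\to 0$. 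Your version is more elementary and strictly stronger in output: it converges the whole sequence without any subsequence extraction (the paper tacitly needs a subsequence-to-sequence argument it does not spell out), and it is quantitative --- combining your two displayed inequalities even gives the rate $\|\mathbf{u}_k-\bar{\mathbf{u}}\|_{\mathbf{H}_0^1}\le \tfrac{2}{m}\|\nabla J(\mathbf{u}_k)\|_{\mathbf{H}_0^1}$, which makes the boundedness step dispensable. The caveat you flag about $\Div\mathbf{w}_k=0$ holding only as $\gamma\to\infty$ is real, but it is an idealization the paper's own proof makes in exactly the same way when it invokes Corollary \ref{cor:largegamma}, so it is not a gap attributable to your argument.
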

\begin{proof}
Let $\left\lbrace \mathbf{u}_k \right\rbrace_{k \in \mathbb{N}}$ be the sequence generated by Algorithm \eqref{alg:v2}. By Corollary \ref{cor:largegamma} it fulfills $\Div \mathbf{u}_k =0$ for all $k \in \mathbb{N}$. Therefore, $J_\sigma(\mathbf{u}_k)= J(\mathbf{u}_k)$. 

Moreover, let us recall that $J$ is coercive and  lower semicontinuous. Then, coercivity implies the uniformly boundedness of the sequence $\{ \mathbf{u}_k\}_{k \in \mathbb{N}}$ in $\mathbf{H}_0^1(\Omega)$. Thus, by reflexivity of $\mathbf{H}_0^1(\Omega)$ there exists a convergent subsequence $\left\lbrace \mathbf{u}_{k_j} \right\rbrace_{j \in \mathbb{N}}$ such that $\mathbf{u}_{k_j} \rightharpoonup \mathbf{u}^*$. Following \cite[Prop. 6.15]{Peypo} and thanks to the convexity of $J$, we obtain that
\begin{equation}\label{eq:weak_limit}
J(\mathbf{v}) \geq   J(\mathbf{u}_{k_j}) + ( \nabla J(\mathbf{u}_{k_j}), \mathbf{v}-\mathbf{u}_{k_j})_{\mathbf{H}_0^1},\,\,\forall\mathbf{v} \in \mathbf{H}_0^1(\Omega).
\end{equation}
However, from the weak lower semicontinuity of $J$ we have that $\displaystyle \liminf_{j \rightarrow \infty} J(\mathbf{u}_{k_j}) \geq J(\mathbf{u}^*)$. Thus, from Corollary \ref{lema:f_prima} and \eqref{eq:weak_limit} we deduce that $J(\mathbf{u}^*) \leq J(\mathbf{v})$ for all $\mathbf{v} \in \mathbf{H}_0^1$, i.e.,  every weak limit point of the subsequence $\left\lbrace \mathbf{u}_{k_j} \right\rbrace_{j \in \mathbb{N}}$ minimizes $J$. Further, since $ \mathbf{u}_{k_j} \in V $ and $V$ is weakly closed, then $\Div \mathbf{u}^*=0$. From the uniqueness of the minimizer of $J$ pointed out in Section \ref{sec:expen}, we conclude that $\mathbf{u}^*=\bar{\mathbf{u}}$. 

Strong convergence of the sequence $\lbrace \mathbf{u}_{k_j} \rbrace_{j \in \mathbb{N}}$ in $\mathbf{H}_0^1(\Omega)$ can be proved using the properties of the  cost functional. Let us rewrite  $J(\mathbf{u}_{k_j})= \tilde{J}(\mathbf{u}_{k_j}) + \tilde{G}(\mathbf{u}_{k_j})$, where 
\[
\displaystyle \tilde{J}(\mathbf{u}_{k_j})=\mu\int_\Omega \mathcal{E}\mathbf{u}_ {k_j}:\mathcal{E}\mathbf{u}_{k_j} \,dx  - \int_\Omega \mathbf{f}\cdot \mathbf{u}_{k_j}\,dx\,\,\mbox{ and }\,\,\tilde{G}(\mathbf{u}_{k_j})=\displaystyle \int_\Omega \Psi(\mathcal{E}\mathbf{u}_{k_j}) \, dx
\]
stand for the quadratic part and the regularized term, respectively.  Since $ \tilde{J}(\mathbf{u}_{k_j})$ is quadratic we obtain that
 \begin{equation}\label{eq:aux13}
 \tilde{J}(\mathbf{u}_{k_j})= \tilde{J}(\bar{\mathbf{u}})+ \left\langle \tilde{J}'(\bar{\mathbf{u}}),(\mathbf{u}_k -\bar{\mathbf{u}})\right\rangle_{\mathbf{H}^{-1}, \mathbf{H}^1_0} +\frac{1}{2} \left\langle \tilde{J}''(\bar{\mathbf{u}})(\mathbf{u}_ {k_j}-\bar{\mathbf{u}}),(\mathbf{u}_{k_j} -\bar{\mathbf{u}})\right\rangle_{\mathbf{H}^{-1}, \mathbf{H}^1_0},
 \end{equation}
 where the second derivative $\tilde{J}''(\bar{\mathbf{u}})$  is given by 
\begin{equation*}
  \begin{array}{rll}
    \left\langle \tilde{J}''(\bar{\mathbf{u}})(\mathbf{u}_k- \bar{\mathbf{u}}),(\mathbf{u}_{k_j}- \bar{\mathbf{u}})\right\rangle_{\mathbf{H}^{-1}, \mathbf{H}^1_0} &= \displaystyle  \int_{\Omega} | \mathcal{E}(\mathbf{u}_{k_j}- \bar{\mathbf{u}})|^2 \, dx &= \|\mathcal{E}(\mathbf{u}_{k_j}- \bar{\mathbf{u}})\|_{\mathbb{L}^2}^2.
  \end{array}
\end{equation*}

By applying Korn's inequality to $\| \mathcal{E}(\mathbf{u}_k- \bar{\mathbf{u}})\|^2_{\mathbb{L}^2}$, we obtain that there exists a constant $C>0$, such that $\left\langle \tilde{J}''(\bar{\mathbf{u}})(\mathbf{u}_{k_j}- \bar{\mathbf{u}}),(\mathbf{u}_{k_j}- \bar{\mathbf{u}})\right\rangle_{\mathbf{H}^{-1}, \mathbf{H}^1_0} \geq C \|\mathbf{u}_{k_j}- \bar{\mathbf{u}}\|_{\mathbf{H}_0^1}^2$. Then, from \eqref{eq:aux13} we have that
\begin{equation}\label{eq:aux15.a}
\begin{array}{lll}
\frac{C}{2}\|\mathbf{u}_{k_j}- \bar{\mathbf{u}}\|_{\mathbf{H}_0^1}^2 & \leq  \tilde{J}(\mathbf{u}_{k_j})-\tilde{J}(\bar{\mathbf{u}}) - \left\langle \tilde{J}'(\bar{\mathbf{u}}),(\mathbf{u}_{k_j}-\bar{\mathbf{u}})\right\rangle_{\mathbf{H}^{-1}, \mathbf{H}^1_0} \vspace{0.2cm}\\
\hspace{1cm} & =  \tilde{J}(\mathbf{u}_{k_j}) + \tilde{G}(\mathbf{u}_{k_j}) -\tilde{J}(\bar{\mathbf{u}}) - \tilde{G}(\mathbf{u}_{k_j}) \vspace{0.2cm}\\ &- \left\langle \tilde{J}'(\bar{\mathbf{u}}),(\mathbf{u}_{k_j}-\bar{\mathbf{u}})\right\rangle_{\mathbf{H}^{-1}, \mathbf{H}^1_0}.
\end{array}
\end{equation}
Taking the $\limsup$ on both sides of inequality \eqref{eq:aux15.a}, we get that
\begin{equation}\label{eq:strong_conv}
  \begin{array}{lll}
  \displaystyle\limsup_{j \rightarrow \infty} \frac{C}{2}\|\mathbf{u}_{k_j}- \bar{\mathbf{u}}\|_{\mathbf{H}_0^1}^2 &  \leq  \displaystyle\limsup_{j \rightarrow \infty} (\tilde{J}(\mathbf{u}_{k_j}) + \tilde{G}(\mathbf{u}_{k_j})) \vspace{0.2cm}\\  &+\displaystyle\limsup_{j \rightarrow \infty}(-\tilde{J}(\bar{\mathbf{u}}) - \tilde{G}(\mathbf{u}_{k_j})) \vspace{0.2cm}\\
  &+ \displaystyle\limsup_{j \rightarrow \infty} \left(- \left\langle \tilde{J}'(\bar{\mathbf{u}}),(\mathbf{u}_{k_j}-\bar{\mathbf{u}})\right\rangle_{\mathbf{H}^{-1}, \mathbf{H}^1_0}\right).
  \end{array}
  \end{equation}
Then from \eqref{eq:J_convergente} and \eqref{eq:strong_conv} we have:
\begin{equation}\label{eq:strong_conv2}
  \begin{array}{lll}
  \displaystyle\limsup_{j \rightarrow \infty} \frac{C}{2}\|\mathbf{u}_{k_j}- \bar{\mathbf{u}}\|_{\mathbf{H}_0^1}^2 &  \leq  J^* -\tilde{J}(\bar{\mathbf{u}}) + \displaystyle\limsup_{j \rightarrow \infty}( - \tilde{G}(\mathbf{u}_{k_j}) \vspace{0.2cm}\\&+ \displaystyle\limsup_{j \rightarrow \infty} - \left\langle \tilde{J}'(\bar{\mathbf{u}}),(\mathbf{u}_{k_j}-\bar{\mathbf{u}})\right\rangle_{\mathbf{H}^{-1}, \mathbf{H}^1_0}.\\
  \vspace{0.2cm}\\
  & =   J^* -\tilde{J}(\bar{\mathbf{u}}) - \displaystyle\liminf_{j \rightarrow \infty}\tilde{G}(\mathbf{u}_{k_j}) \vspace{0.2cm}\\&+ \displaystyle\limsup_{j \rightarrow \infty} \left(- \left\langle \tilde{J}'(\bar{\mathbf{u}}),(\mathbf{u}_{k_j}-\bar{\mathbf{u}})\right\rangle_{\mathbf{H}^{-1}, \mathbf{H}^1_0}\right).\\
  \end{array}
  \end{equation}
Next, since $\mathbf{u}_k \rightharpoonup \bar{\mathbf{u}}$, we  conclude that $ \displaystyle \left\langle \tilde{J}'(\bar{\mathbf{u}}),(\mathbf{u}_{k_j}-\bar{\mathbf{u}})\right\rangle_{\mathbf{H}^{-1}, \mathbf{H}^1_0} \displaystyle \rightarrow 0$ . Also, from the weakly lower semicontinuity of $\tilde{G}$ we have that $ -\displaystyle\liminf_{j \rightarrow \infty}\tilde{G}(\mathbf{u}_{k_j}) \leq - \tilde{G}(\bar{\mathbf{u}}) $. Collecting these arguments, from \eqref{eq:strong_conv2} we obtain
\begin{equation}\label{eq:strong_conv3}
  \begin{array}{lll}
  \displaystyle\limsup_{j \rightarrow \infty} \frac{C}{2}\|\mathbf{u}_{k_j}- \bar{\mathbf{u}}\|_{\mathbf{H}_0^1}^2 & \leq   J^* -\tilde{J}(\bar{\mathbf{u}}) - \tilde{G}(\bar{\mathbf{u}})  = 0.\\
  \end{array}
  \end{equation}
The last equality follows since $J^*=\inf J(\mathbf{u}_{k_j})= J(\bar{\mathbf{u}})$. Thus, we conclude that $\mathbf{u}_k \rightarrow \bar{\mathbf{u}}$ strongly in $\mathbf{H}^1_0(\Omega)$.
\end{proof}
\section{Numerical Experiments}
Our final section is devoted to the numerical experimentation of Algorithm \ref{alg:v2}. In the first subsection, we conduct a set of experiments linked to the algorithm's performance, including an exhaustive testing of the parameters governing the associated regularizations and, in particular, to the exact penalization parameter $\sigma$. The second set of experiments, in Section \ref{ss:comparison}, aims to compare our algorithm with the Semismooth Newton method, which is well known by its superlinear convergence properties. A third set of experiments in three dimensions are also presented to further illustrate the applicability and scalability of the exact penalization method which will be referred as EP algorithm.
\subsubsection*{Implementation details}
 We consider $\Omega \in \mathbb{R}^{n}$, $n=2,3$ be a polygonal/polyhedral domain and $\mathcal{T}_h$ a regular discretization (by triangles or tetrahedrons) on $\Omega$. The Galerkin Finite Element Method was used to approximate the desired velocity $\bar{\mathbf{u}} \in \mathbf{H}_0^1$ - of the Bingham problem (8) - by continuous piecewise quadratic vector-valued Lagrange trial functions over each element. Therefore, we consider the finite-element space $V_h=\{\mathbf{u}_h \in C(\bar{\Omega})^2 | \hspace{1mm} \mathbf{u}_h |_{\Gamma}=0 \,\, \text{and}\,\, \mathbf{u}_h|_T \in P^2, \forall \, T \in \mathcal{T}_h\}$. Our experiments were implemented in the open-source software FEniCS (https://fenicsproject.org/). 
\subsection{Algorithm's performance}\label{rotational_flow}

 In order to measure  the performance of the EP algorithm, we consider the benchmark of a rotational Bingham flow  in a square reservoir. In this case, problem \eqref{minexact} is solved with a  driven  force $f(x_1,x_2)=  300(x_2-0.5,0.5-x_1)$ over the domain $\Omega=[0,1]^2$ with $g=10 \sqrt{2}$. In the following experiments the mesh size is $h=1/50$. Figure 1 shows the computed rotational flow expected from applying the force $f$, whereas in Figure \ref{Fig:2.2} we can visualize the yielded (dark blue) and unyielded regions (red tones). This configuration presents a central solid region. 
 \begin{figure}[H]
  \centering
  \begin{subfigure}{.45\textwidth}
    \includegraphics[width=8cm]{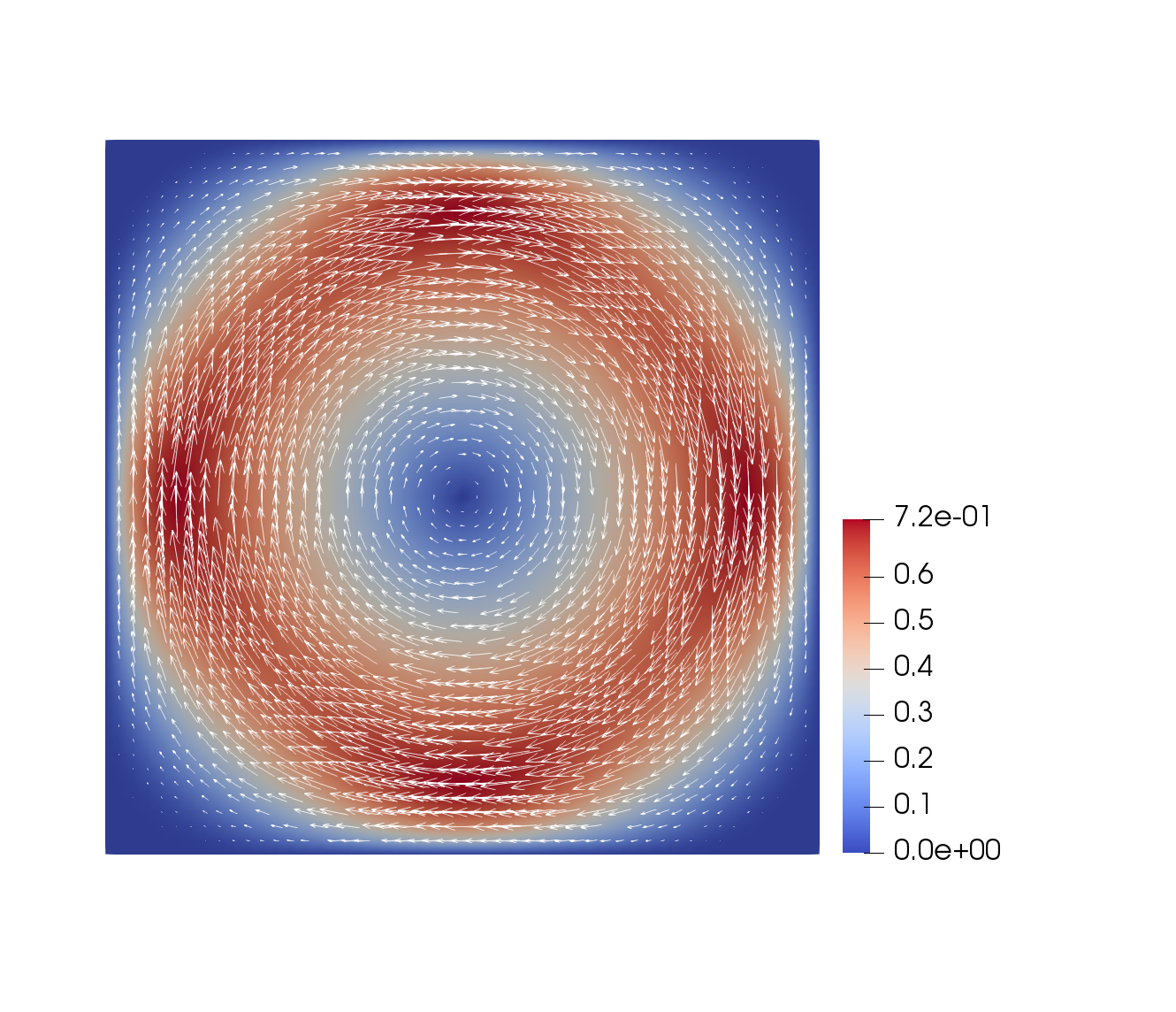}
    \caption{Velocity field $\mathbf{u}$}
    \label{Fig:2.1}
  \end{subfigure}
  \hfill
  \begin{subfigure}{.45\textwidth}
    \includegraphics[width=8cm]{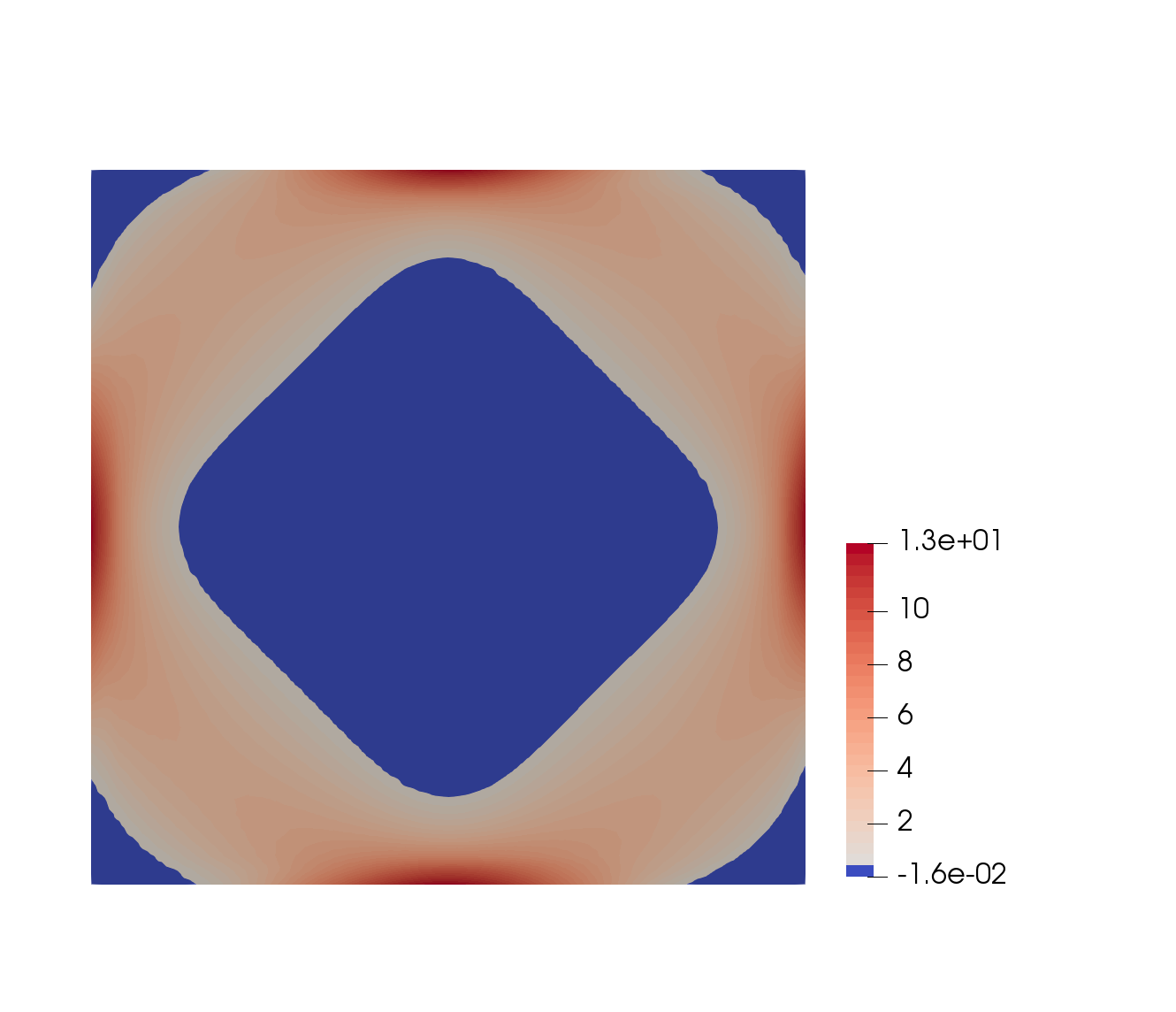}
    \caption{Plug zones given by $|\mathcal{E}(\mathbf{u})|_{\mathbb{L}^2}$}
       \label{Fig:2.2}
  \end{subfigure}
  \label{Fig:1}
  \caption{Bingham flow in the square reservoir.}
\end{figure}

 Recall that Algorithm \ref{alg:v2} has three important parameters, namely:
 \begin{itemize}
 \item  $\sigma$: exact penalization parameter, satisfying $\sigma \geq \sigma_0$, see Theorem \ref{teo:exact2}
 	\item  $\gamma$: enriching second--order information parameter 
 	\item  $\beta$ : Huber regularization parameter 
 \end{itemize}
In the next experiments, we will discuss the influence of these parameters on the numerical realization of the method.

\subsubsection{Experiment 1: exact penalization test varying $\sigma$} In previous sections, we have discussed that the equivalence between the constrained problem \eqref{eq:prob_const} and the penalized problem  \eqref{minexact} is given for all $\sigma \geq \sigma_0$, with $\sigma_0 \approx \|{\lambda}\|_{L^2}|\Omega|^{\frac12}$. In our first experiment, we look for the numerical behavior of the divergence term $\| \Div \mathbf{u} \|_{L^1}$ of the approximated solution in each iterate of the EP Algorithm, by choosing different values of $\sigma$. 

Despite the computation of $ \|{\lambda}\|_{L^2}$ can not be a--priorily done, i.e. without an approximate solution at hand, we can confirm numerically that our estimation of  $\sigma_0$ in Remark \ref{re:bound_sigma0} is sharp. Indeed, we have estimated the  upper bound: $\sigma_0 \approx \|\lambda\|_{L^2}|\Omega|^{-1/2} \leq 896.5$. This can be observed in Figures \ref{Fig:1} - \ref{Fig:3}, where the divergence is depicted with colored - solid lines for values of $\sigma \geq  900$. We can observe that the  history of the divergence values  remain between $1.0e-5$ and $1.0e-7$, recognisably lower than those for smaller values of $\sigma_0 < 900$, in dashed lines. This, also illustrates the equivalence of the exact penalization with the constrained formulation of the problem. 

Table \ref{table:1.1} summarises numerical results. For the variation of $\sigma\geq \sigma_0$, we can observe that slightly better results are obtained when it is close to $\sigma_0$ rather than when $\sigma$ is too big. Roughly speaking, once $\sigma$ is suitably chosen, its variation does not have a hard influence in the numerical performance of the Algorithm.

  \begin{figure}[H]
    \centering
      \includegraphics[width=10cm]{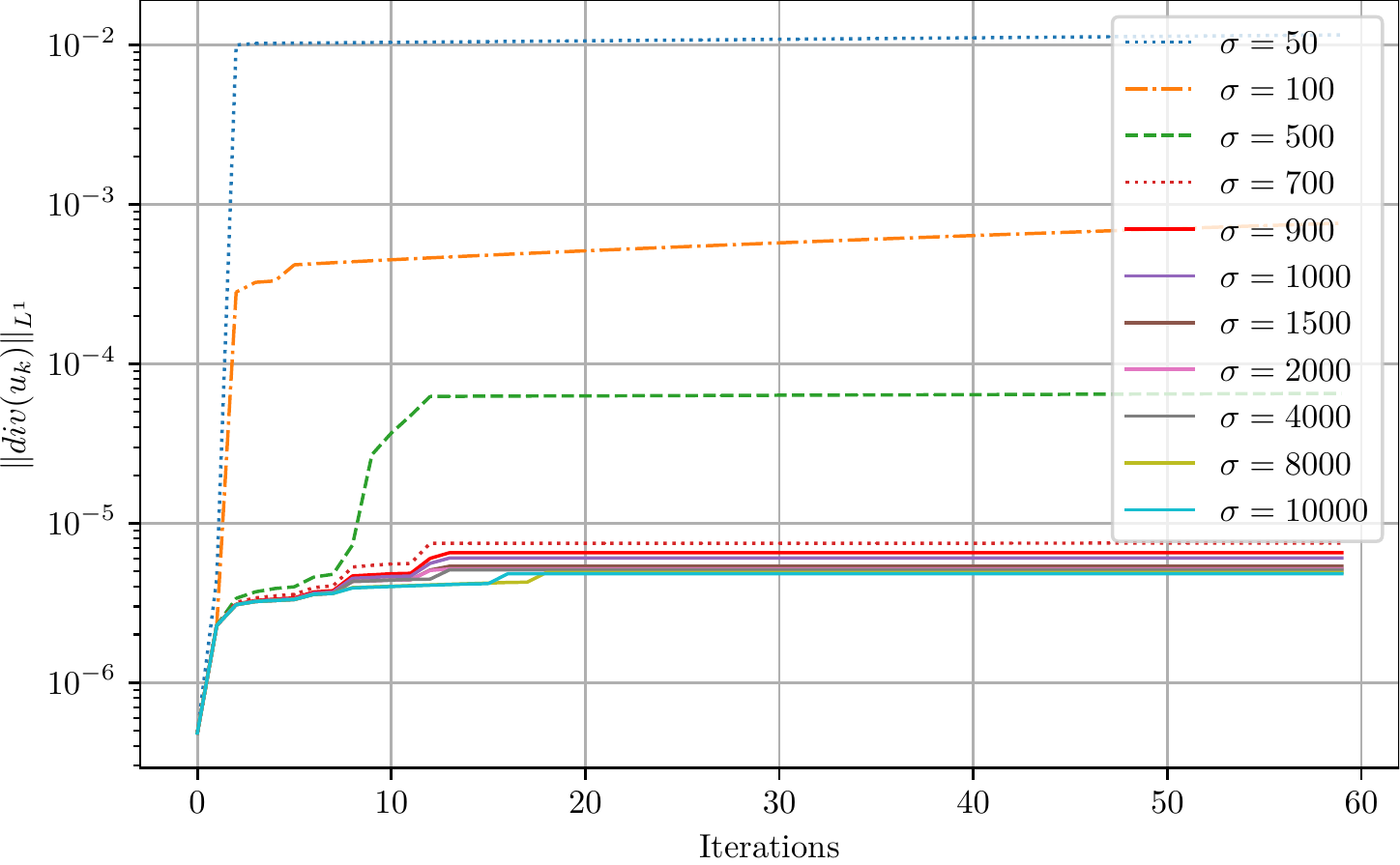}
      \captionof{figure}{\small{Experiment 1: history of the divergence for $\gamma=1e+7$. Solid lines for $\sigma \geq 900$}}
      \label{Fig:1}
    \end{figure}
        \begin{figure}[H]
          \centering
          \includegraphics[width=10cm]{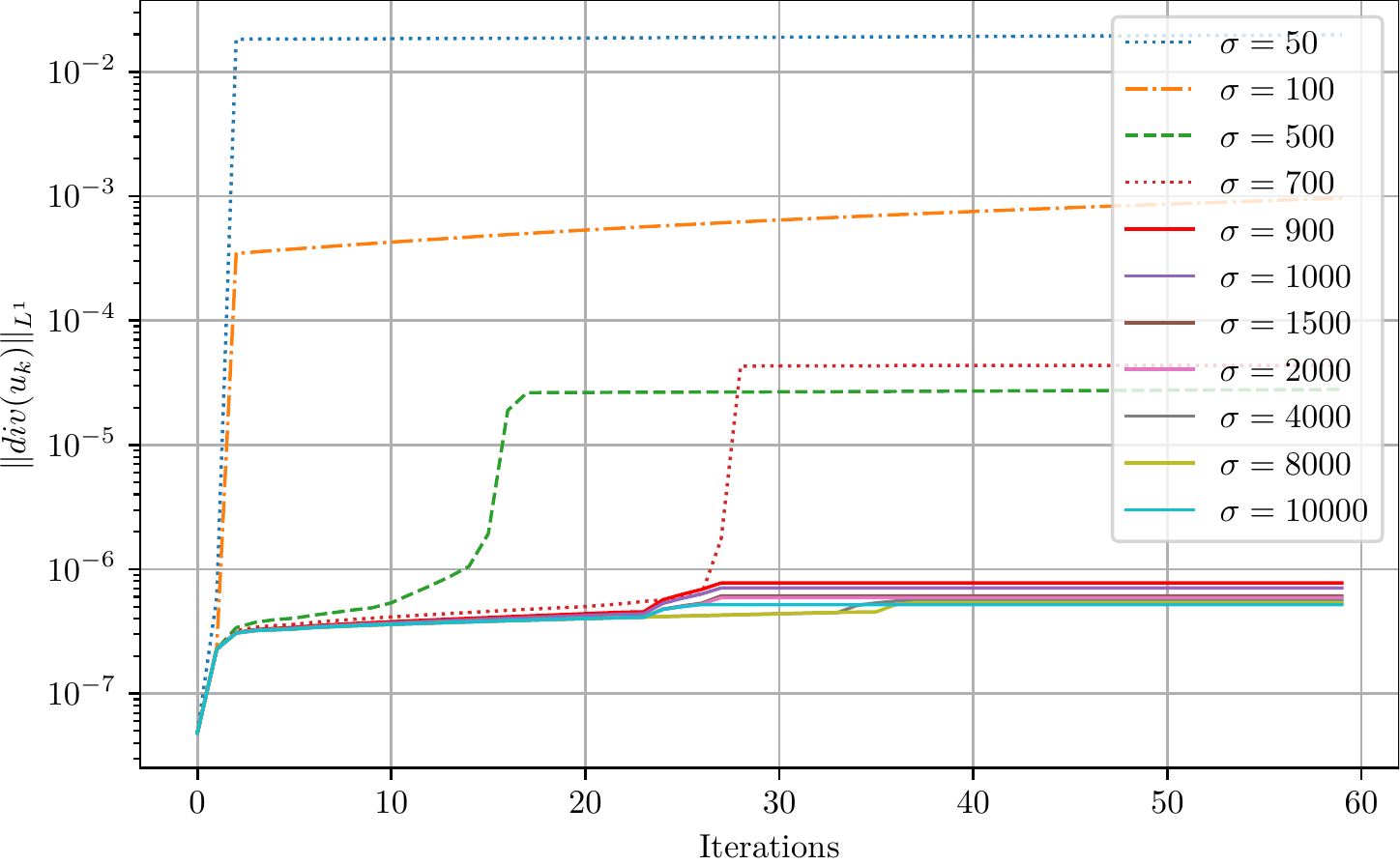}
        \captionof{figure}{\small{Experiment 1: history of the divergence for $\gamma=1e+8$. Solid lines for $\sigma \geq 900$}}
        \label{Fig:2}
        \end{figure}
    \begin{figure}[H]
          \centering
          \includegraphics[width=10cm]{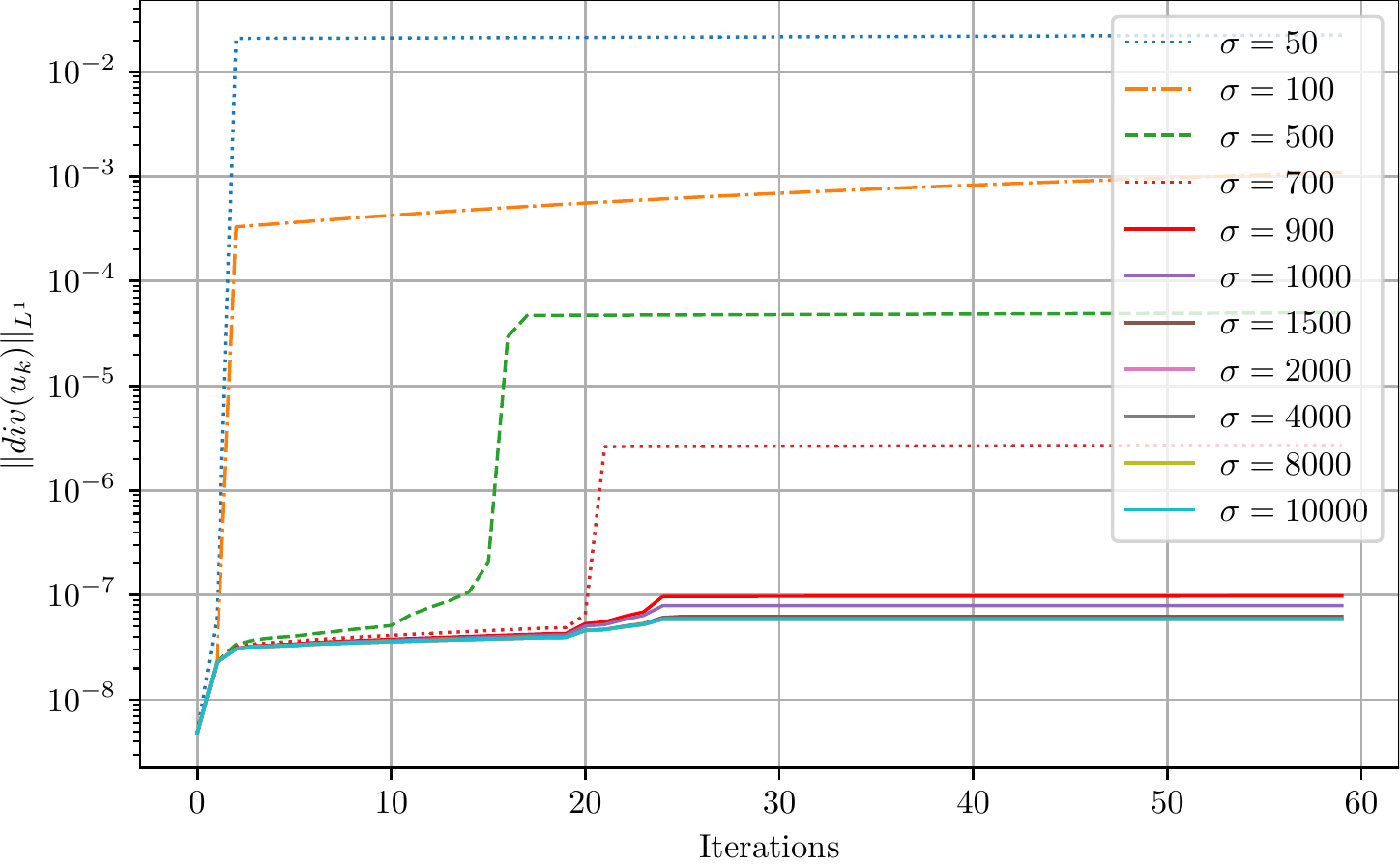}
        \captionof{figure}{\small{Experiment 1: history of the divergence for $\gamma=1e+9$. Solid lines for $\sigma \geq 900$}}
        \label{Fig:3}
    \end{figure}


\subsubsection{Experiment 2: enriched second--order information test} 
In this experiment, we solve the same problem formulated in Experiment 1 by focussing on the  parameter $\gamma$, which is associated to the generalized second--order information obtained from  term  $\|\Div \mathbf{u}_k\|_{L^1}$. From our theory, we know that in order to get a good approximation for the second--order information $\gamma$ must be sufficiently large. We may guess such $\gamma$ by using the relation obtained from Corolary \ref{cor:largegamma}:  $\gamma > \frac{1}{C} \|\boldsymbol{\bar{d}}_k\|^2_{\mathbf{H}^{-1}}$, which assures a small divergence.

As shown in Table \ref{table:1.1}, this parameter is crucial for our algorithm; in fact, if we neglect  the additional second--order information (i.e. $\gamma=0$) the algorithm fails to converge.
In contrast,  by setting icreasing values of  $\gamma=1e+7, 1e+8$ and $1e+9$ we see that $\|\Div \mathbf{u}_k\|_{L^1}$ gets smaller. In fact, $\gamma=1e+9$ achieves a divergence norm of order $1e-8$ and the algorithm seems to have a more stable cost-functional. Figures \ref{Fig:f1} - \ref{Fig:f3} depict this behavior, for $\gamma=1e+9$ and for $\sigma \geq 900$, the cost function converges to the same value. However, slightly more computing time is spent for this value of $\gamma$. Also, we observe in the fifth column that the stopping criteria $|\langle J'(\mathbf{u}_k),\mathbf{w}_k\rangle| $ (subsection \ref{rem:line_search}) is closer to zero ($1e-05$) (see Figure \ref{fig:stopping}). 
From these numerical results, we conclude that the additional second--order information, enriching the descent direction system  \eqref{eq:sos}, is essential for the exact penalization method.
\begin{figure}[H]
  \centering
  \includegraphics[width=10cm]{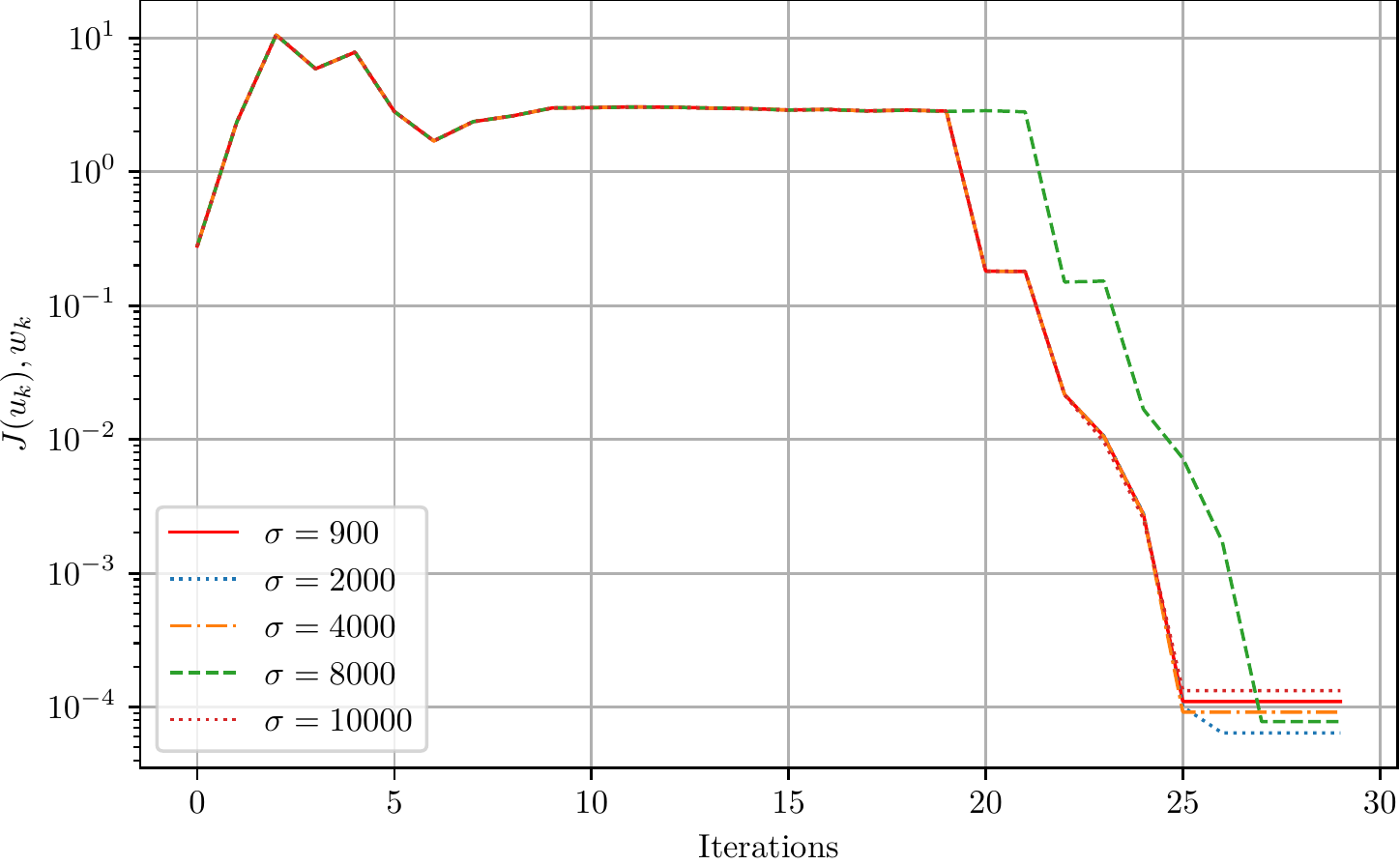}
  \captionof{figure}{Experiment 2: history of stopping criteria of EP-Algorithm with $ \gamma=1e+9$ and $\beta=1e+03$.}
  \label{fig:stopping}
\end{figure}
\begin{figure}[H]
  \centering
    \includegraphics[width=10cm]{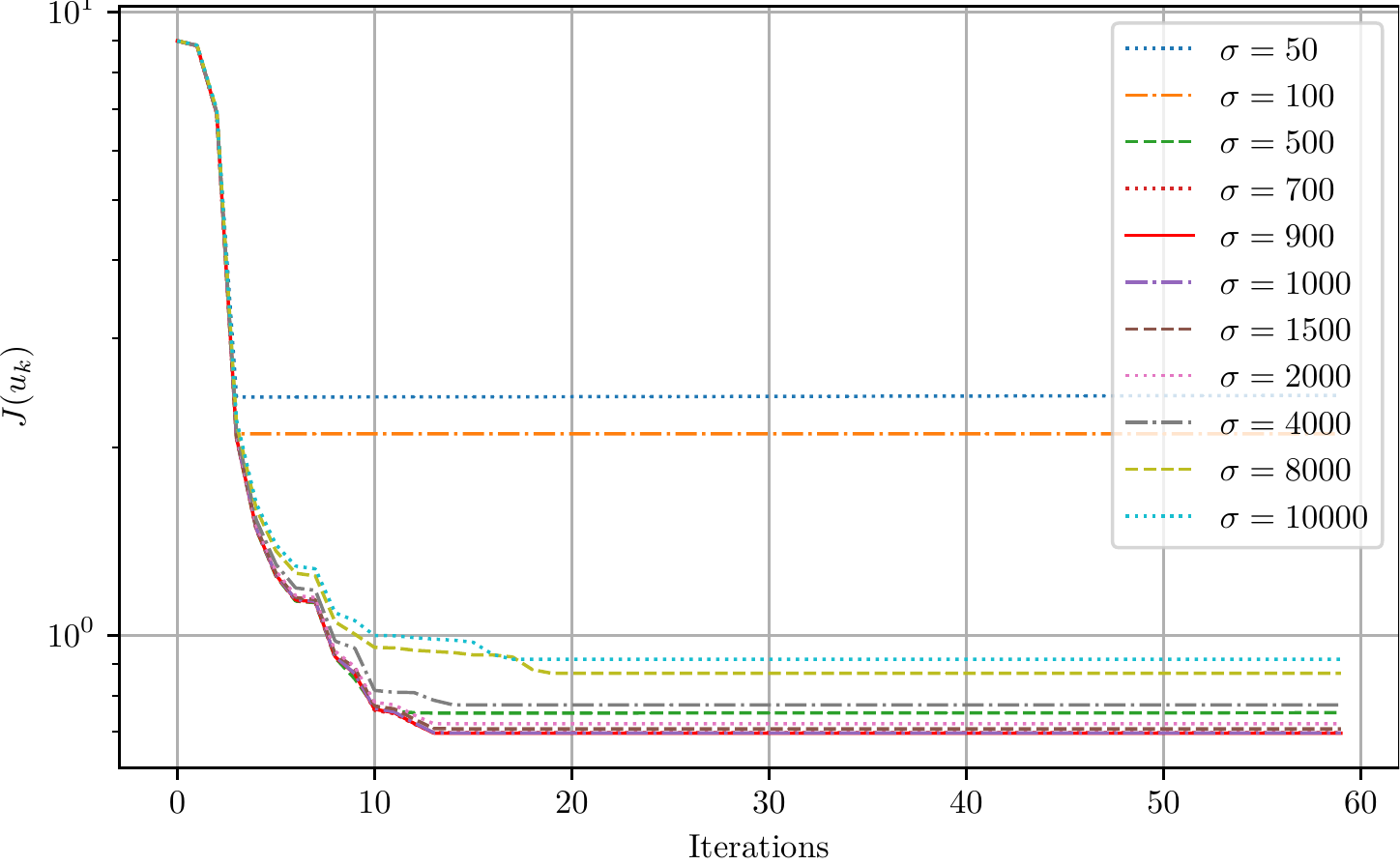}
    \captionof{figure}{Experiment 2: cost functional for $\gamma=1e+7$. Solid lines for $\sigma \geq 900$}
    \label{Fig:f1}
  \end{figure}

\begin{table}[H]
  \begin{center}
  \begin{tabular}{c|c| c c c c c c  } 
  \hline
  \multicolumn{8}{c}{Numerical performance for the EP-Algorithm}\\ \hline
   $\gamma$&$\beta$&$\sigma$ & $k$  &$|\langle J'(\mathbf{u}_k),\mathbf{w}_k\rangle|$  &$\|\Div \mathbf{u}_k\|_{L^1}$ & $J(\mathbf{u}_k)$ &t\\ \hline 
&&900&100 &   -0.626&7.55e-05& 0.06&91.29 \\
& &2000& 100&   -0.626&7.55e-05&0.14&91.29 \\
 &500 &4000& 100&   -0.626&7.55e-05 &0.29&91.18\\
 & &8000& 100&  -0.626&7.55e-05&0.59&91.22\\
 & &10000& 100&  -0.626&7.55e-05&0.74&91.20\\
    \cline{2-8}
    0   & &900& 100&  -0.313&3.78e-05& 0.03&91.46 \\
  & &2000& 100&  -0.313&3.78e-05&0.07&91.22 \\
  &1000 &4000& 100&  -0.313&3.78e-05 & 0.14&  91.28 \\
  & &8000& 100&  -0.313&3.78e-05 & 0.29&91.27 \\
  & &10000& 100&  -0.313& 3.78e-05 & 0.37&91.02 \\
   \hline
 &&900&12 &  3.7e-03&5.98e-6& -8.39&10.40 \\
& &2000& 12&  6.11e-03&4.80e-06&-8.36&10.52 \\
 &500 &4000& 10&  1.11e-02&4.55e-06 & -8.32&8.76\\
 & &8000& 10&  2.46e-02&4.28e-06&-8.23&8.83\\
 & &10000& 9&  3.41e-02&4.16e-06&-8.18&8.00\\
    \cline{2-8}
    1e+7    & &900& 14&  5.20e-03&6.53e-06& -8.30&12.20 \\
  & &2000& 14&  6.22e-03&5.15e-06&-8.27&12.16 \\
  &1000 &4000& 14&  5.89e-03&5.09e-06 & -8.22&  12.13 \\
  & &8000& 19&  6.19e-03&4.90e-06 & -8.13&14.54 \\
  & &10000& 17&  6.64e-03& 4.82e-06 & -8.08&14.45 \\
   \hline
   & &900& 12& 5.04e-04&7.58e-07& -8.41& 10.28 \\
   & &2000& 12&  4.16e-04&5.65e-07&-8.41&10.30 \\
   & 500 &4000& 12&  4.21e-04&5.64e-07 & -8.40&10.18\\
   &  &8000& 12&  2.64e-03&5.22e-07 &-8.39&10.52\\
   &&10000& 11&  2.8e-03& 5.11e-07&-8.39&9.57\\
   \cline{2-8}
   1e+8 &    &900&28 &  7.53e-04&7.75e-07& -8.33&24.02 \\ 
   & &2000& 28&  3.56e-04&5.90e-07&-8.32&24.04 \\
     &1000 &4000& 37&  6.43e-04&5.54e-07 & -8.32&31.69\\
       &&8000& 38&  1.34e-03&5.40e-07&-8.31&32.39\\
       &&10000& 27&  2.06e-03&5.19e-07&-8.30&23.33\\
      \hline
      & &900&14 & 2.31e-04&1.05e-07& -8.42& 12.19 \\
      &  &2000& 15& 3.60e-05&7.03e-08&-8.42&12.65 \\
      & 500 &4000& 15&  1.13e-04&6.60e-08 & -8.42&12.77\\
      &  &8000& 14&  2.07e-04&6.33e-08 &-8.42&11.98\\
      &  &10000& 14&  2.05e-04&6.33e-08 &-8.42&11.86\\
      \cline{2-8}
      1e+9     & &900& 25&  1.1e-04&9.74e-08& -8.33&21.09 \\
        & &2000& 26&  6.40e-05&6.18e-08&-8.33&21.89 \\
   &1000 &4000& 25&  9.16e-05&5.89e-08 & -8.33&21.17\\
        & &8000& 27&7.77e-05&6.01e-08&-8.33&22.65\\
        & &10000& 25&  1.32e-4&5.90e-08&-8.33&21.02\\
    \hline
  \end{tabular}
 \caption{Performance of the EP- Algorithm for different values of $\gamma$, $\beta$ and $\sigma$}
  \label{table:1.1}
  \end{center}
   \end{table}
      \begin{figure}[H]
        \centering
        \includegraphics[width=10cm]{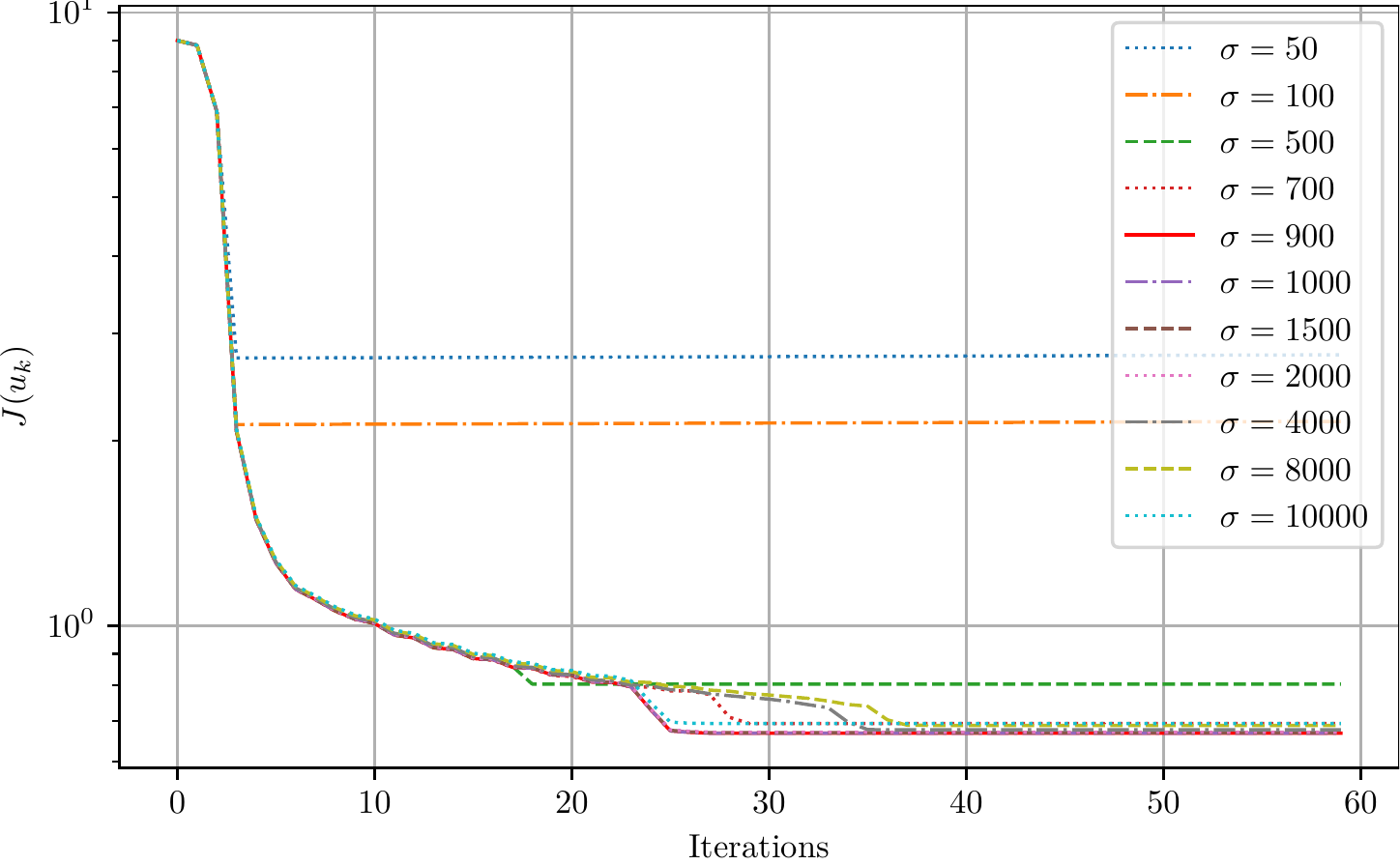}
      \captionof{figure}{Experiment 2: cost functional for $\gamma=1e+8$. Solid lines for $\sigma \geq 900$}
      \label{Fig:f2}
      \end{figure}
      \begin{figure}[H]
        \centering
        \includegraphics[width=10cm]{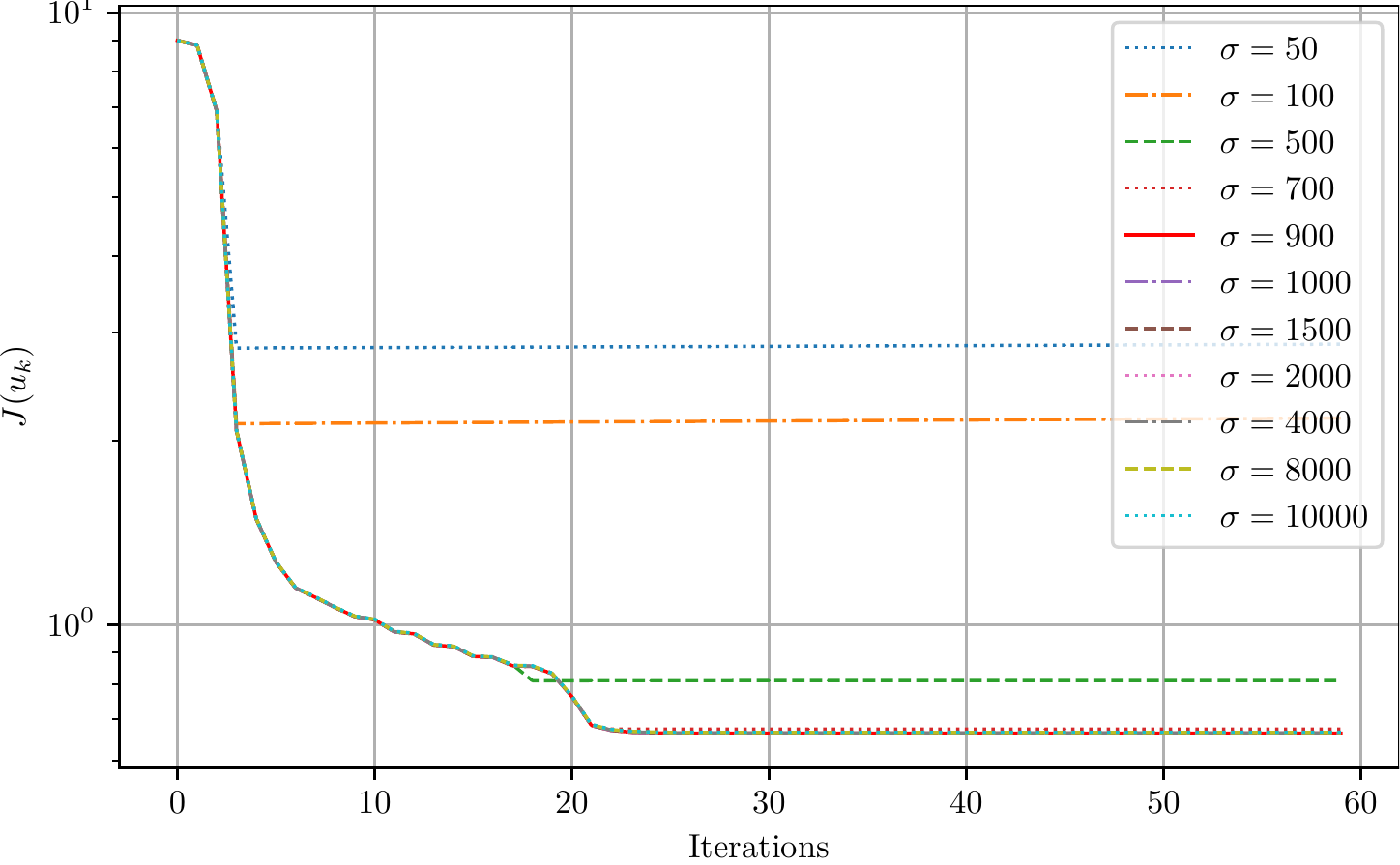}
      \captionof{figure}{Experiment 2: cost functional for $\gamma=1e+9$. Solid lines for $\sigma \geq 900$}
      \label{Fig:f3}
    \end{figure}

  \subsection{Comparison with Newton Semismooth method} \label{ss:comparison}
 We proceed to compare the exact penalization methods with the Newton semismooth method (SSN), which is also build on second-order information basis, and it is very well known by its superlinear rate of convergence properties (\cite[Ch.3]{Ulbrich}). Notice that SSN solves the same regularized problem in a context of a nonlinear system (\cite{DlRG2}). Therefore, the effect of the parameter $\beta$ is not an issue. Since the stopping criteria for each strategy differs, we compare a fix number of iterations of each algorithm. 
 In the following set of experiments we illustrate that the exact penalization method can deliver several numerical benefits and drawbacks comparing with SSN. For further comparison, we also consider the quadratic penalization (QP).
  \subsubsection{Experiment 3: convergence to a Bingham's analytical solution}
The original problem was regularized using the Huber--somoothing, depending on the parameter $\beta$. In the following experiment, the analytical solution of problem \eqref{eq:prob} is known, which allow us to compute the \textit{error}= $\| \mathbf{u}_{exact} - \mathbf{u}_k \|_{\mathbf{H}_0^1}$ at each $k$-th iteration for EP and SSN algorithms.

In this example, we solve the problem of a fluid flow between two parallel plates. Here, the velocity field $\mathbf{u}=(\mathbf{u}_1(y),\mathbf{0},\mathbf{0})$ is a scalar field that depends only on $y$ in the $x$-direction. The corresponding minimization functional is simplified since the strain-rate tensor is given by the gradient $\nabla$, i.e., we have 
\begin{equation*}
  \underset{\mathbf{u}\in \mathbf{H}_0^1}{\min}\, J(\mathbf{u}):= \frac{\mu}{2} \int_\Omega (\nabla \mathbf{u},\nabla \mathbf{u}) \,dx + g\int_\Omega |\nabla \mathbf{u}|\, dx - \int_\Omega \mathbf{f}\cdot\mathbf{u} \,dx.
\end{equation*}
The analytical solution is given by $\mathbf{u}_{exact}=(\boldsymbol{u}_1(y), 0,0)$ (see \cite[Sec. 6.1.1.1]{Aposporidis}), where:
\begin{equation*}
  \boldsymbol{u}_1(y)=
  \begin{cases}
    \frac{1}{8}[(1-2g)^2-(1-2g-2y)^2], & \text{ if } 0 \leq y < \frac{1}{2}-g,\\
    \frac{1}{8}(1-2g)^2, & \text{ if } \frac{1}{2}-g \leq y \leq \frac{1}{2} + g\\
    \frac{1}{8}[(1-2g)^2-(2y-2g-1)^2], & \text{ if } \frac{1}{2}+g < y \leq 1,
  \end{cases}
\end{equation*}
and the pressure drop $p$, is given by $p=-x$. Here, we set $g=0.3$. 

Following Remark \ref{re:bound_sigma0}, we chose  $\sigma_0 \approx \|\lambda\|_{L^2}|\Omega|^{1/2} \leq 17$ to guarantee the equivalence with the exact penalization formulation. For fixed $\gamma=1e+09$, we solve the problem varying $\beta$ as shown in Table \ref{table:order1}. As expected, as $\beta$ increases, the Huber regularization approximates better the original functional; therefore, the $error$ decays accordingly (see first column). 

 At first sight we observe, in Figure \ref{Fig:order1}, that SSN is faster in the first iterations.  However, we realize that the exact penalization second-order method continues to decrease the  $error$ with a pronounced fall in the last iterations, achieving a considerably lower error and cost (see Table \ref{table:order1} - fifth column) compared with SSN. Furthermore, the exact penalization algorithm computes approximate solutions with a more precise sparse divergence. Thanks to the continuous embedding $L^2(\Omega) \hookrightarrow L^1(\Omega)$ we have that $\|\Div \mathbf{u}_k\|_{L^1} \leq |\Omega|^{1/2} \|\Div \mathbf{u}_k\|_{L^2} $. Therefore, we chose the $L^2$--norm of the divergence as a reference for all methods. We observe that the $L^2$--norm of the divergence is effectively smaller for EP method. 

\begin{table}[H]
  \footnotesize
  \begin{tabular}{l|l cc|cc|cc|cc}
    \toprule
    \multirow{2}{*}{$\beta$}&{}&
    \multicolumn{2}{c}{$\|\mathbf{u}_{exact} - \mathbf{u}_k \|_{\mathbf{H}_0^1}$} &
      \multicolumn{2}{c}{$\|\Div \mathbf{u}_k\|_{L^2}$} &
      \multicolumn{2}{c}{ $J( \mathbf{u}_k)$} &
      \multicolumn{2}{c}{ time (s)}\\
        & {k}&{EP} & {SSN} &  {EP} & {SSN} & {EP} & {SSN} & {EP} & {SSN} \\
        \hline
        &1& 0.121& 0.100&  1.28e-13& 2.46e-08& 0.032& 4.79e-03& 1.19& 1.46\\
        &10& 0.015& 0.002& 9.59e-12& 3.21e-08& -2.43e-03& -2.75e-03& 14.88& 15.62\\
       100 &20& 1.34e-03& 2.99e-03&  9.92e-12& 3.21e-08& -2.93e-03& -2.75e-03& 26.80& 31.34\\
        &30& 1.34e-03& 2.99e-03&  9.92e-12& 3.21e-08& -2.93e-03& -2.75e-03& 39.60& 46.35\\
        &40& 1.34e-03& 2.99e-03&  9.92e-12& 3.21e-08& -2.93e-03& -2.75e-03& 52.91& 64.63\\
        \hline
        &1& 0.121& 0.100&  2.05e-13& 2.49e-08& 0.033& 0.005& 1.16& 1.33\\
        &10& 0.013& 2.82e-03& 4.94e-11& 6.35e-08& -2.40e-03& -2.42e-03& 15.32& 14.477\\
      500 &20& 2.2e-03& 2.82e-03& 5.44e-11& 6.35e-08& -2.70e-03& -2.42e-03& 27.83& 28.614\\
      &30& 2.68e-04& 2.82e-03&  5.42e-11& 6.35e-08& -2.70e-03& -2.42e-03& 40.24& 42.795\\
      &40& 2.67e-04& 2.82e-03&  5.42e-11& 6.35e-08& -2.70e-03& -2.42e-03& 52.61& 57.00\\
      \hline
      &1& 0.121& 0.100&  2.53e-13& 2.49e-08& 0.033& 0.005& 1.16& 1.29\\
      &10& 0.013& 2.81e-03&  3.44e-11& 6.48e-08& -2.29e-03& -2.38e-03& 14.94& 14.45\\
      1000&20& 7.41e-03& 2.81e-03 & 3.53e-11& 6.48e-08& -2.65e-03& -2.38e-03& 27.63& 28.71\\
      &30& 1.34e-04& 2.81e-03&  3.54e-11& 6.48e-08& -2.69e-03& -2.38e-03& 40.28& 42.99\\
      &40& 1.34e-04& 2.81e-03&  3.54e-11& 6.48e-08& -2.69e-03& -2.38e-03& 52.69& 57.31\\
      \hline
      &1& 0.122& 0.100&  5.27e-13& 2.49e-09& 0.033& 0.005& 1.14& 1.30\\
      &10& 0.013& 2.81e-03& 4.32e-11& 6.48e-08& -2.39e-02& -2.34e-03& 15.19& 14.89\\
      5000&20& 0.010& 2.81e-03&  4.57e-11& 6.48e-08& -2.59e-03& -2.34e-03& 28.29& 29.43\\
      &30& 7.79e-03& 2.81e-03&  4.67e-11& 6.48e-08& -2.63e-03& -2.34e-03& 42.35& 43.77\\
      &40& 3.16e-05& 2.81e-03&  5.36e-11& 6.48e-08& -2.67e-03& -2.34e-03& 56.34& 58.110\\
    \bottomrule
  \end{tabular}
  \caption{Experiment 3: 40 iterations of EP vs SSN with various values of $\beta$ for flow between two plates}
  \label{table:order1}
\end{table}

\begin{minipage}[b]{1\textwidth}
  \centering
  \includegraphics[width=10cm]{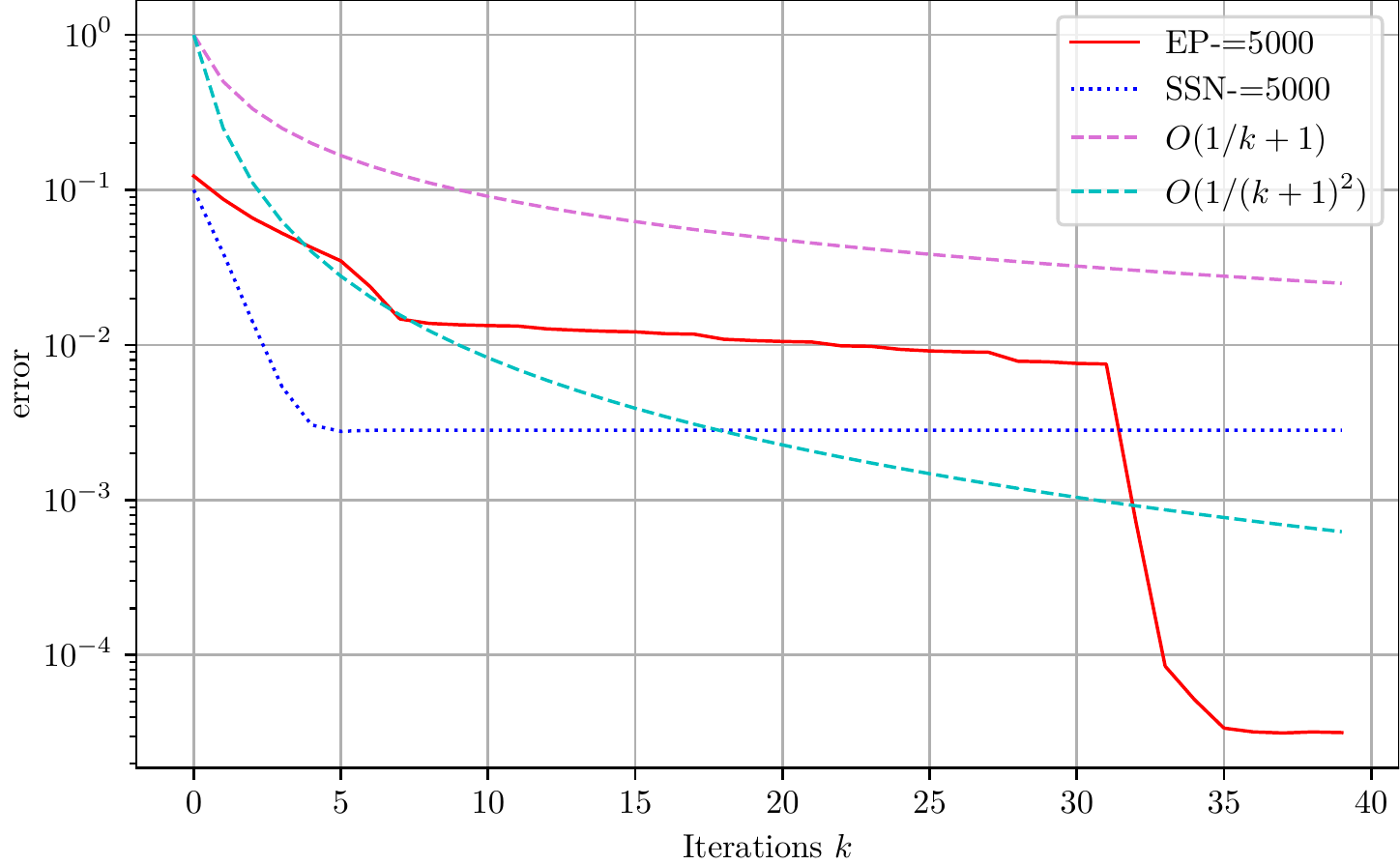}
  \captionof{figure}{Experiment 3: error decay comparison - flow between two plates}
  \label{Fig:order1}
\end{minipage}
\subsubsection{Experiment 4: Comparison with Second Order Methods}
In this experiment we compare three methods based on second other information: SSN,  EP and Quadratic penalization (QP) for the benchmark test presented in section \ref{rotational_flow}. We compare the first 100 iterations of each algorithm in Table \ref{table:2}. The EP-Algorithm is competitive with SSN and the Quadratic penalization method since no major oscillations in the algoritms's performance are shown after the 25th iteration for the three strategies. However, the advantage of the exact penalization is evident when reaching the restriction $\|\Div \mathbf{u}_k\|$ close to zero. Observe in Figures \ref{Fig:7} - \ref{Fig:7.1} that, for several $\sigma$ values, the lowest magnitud for the velocity divergence is achieved by the EP-Algorithm. This is somehow expected by sparsification properties of the $L^1$ - norm. For the same $\sigma$ values the quadratic method hardly achieves an order of $1e-04$. And, in the case of SSN, the velocity divergence norm is greater than in the EP-Algorithm. This behavior is depicted in Table \ref{table:2} for the $L^2$-norm of the velocity divergence of the approximated solution.

The decay of the objective function is plotted in Figure \ref{Fig:8}. Here, EP-Algorithm and QP-Algorithm attain smaller values than the SSN method. Let us point out that the optimal value of the objective functional is negative (see Table \ref{table:2}), therefore a positive value  has been added to the results at each iteration  in order to plot them in logarithmic scale, avoiding negative values. 

    
    \begin{table}[H]
      \begin{tabular}{l ccc|ccc|ccc}
        \toprule
        \multirow{2}{*}{$k$} &
          \multicolumn{3}{c}{  $\|\Div \mathbf{u}_k\|_{L^2}$} &
          \multicolumn{3}{c}{ $J( \mathbf{u}_k)$} &
          \multicolumn{3}{c}{ time (s)}
          \\
          \cline{2-10}
          & {EP} & {QP} & {SSN} & {EP} & {QP} & {SSN} &  {EP} & {QP} & {SSN}\\
          1& 7.170e-09& 0.00167& 0.000413& -0.07532& -0.0855& 2.520& 0.59& 0.652& 0.7827\\
          10& 4.576e-08& 0.00277& 0.0140& -7.890& -8.324& 7811.370& 5.59& 5.34&14.75\\
          20& 5.056e-08& 0.00274& 1.122e-04& -8.144& -8.3492& -6.994& 11.96& 11.35& 22.65\\
          \textbf{25}& 7.485e-08& 0.00273& 7.836e-05& -8.241& -8.352& -7.708& 15.08& 14.25& 26.42\\
          30& 7.486e-08& 0.00273& 7.808e-05& -8.241& -8.352& -7.708& 18.54& 17.18& 30.14\\
          50& 7.487e-08& 0.00273& 7.808e-05& -8.241& -8.352& -7.708& 32.53& 28.89& 45.22\\
          80& 7.490e-08& 0.00273& 7.808e-05& -8.241& -8.352& -7.708& 53.72& 46.59& 67.88\\
          100& 7.491e-08& 0.00273& 7.808e-05& -8.241& -8.352& -7.708& 67.93& 58.76& 83.15\\
        \bottomrule
      \end{tabular}
      \caption{Experiment 4: comparison between EP, QP and SSN algorithms with parameters: $\gamma=1e+9$, $\beta=1e+3$ and $\sigma=3e+3$}
        \label{table:2}
    \end{table}

      \begin{figure}[H]
        \includegraphics[width=8cm]{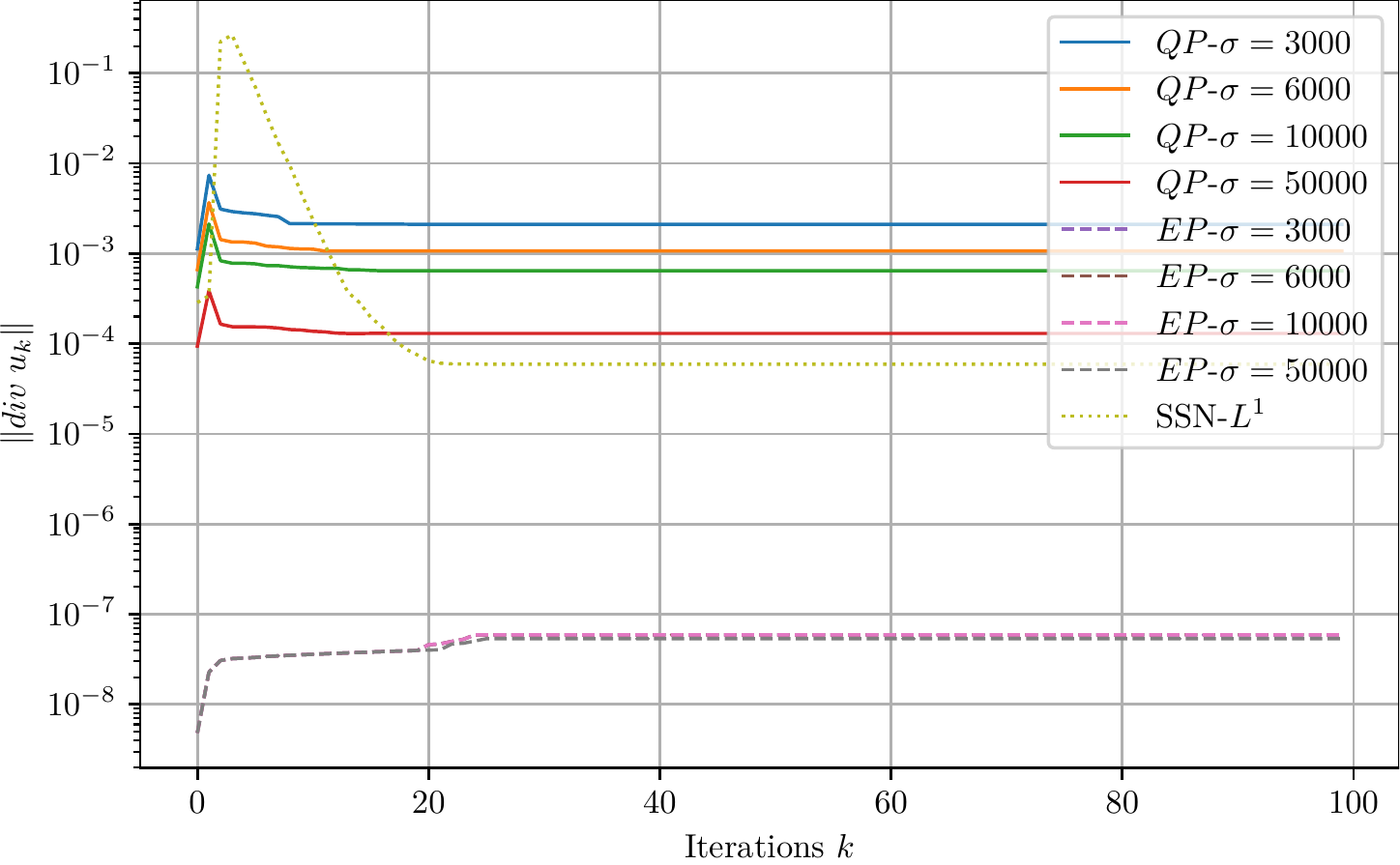}
        \captionof{figure}{\small{Experiment 4: comparison between EP, QP and SSN algorithms of the divergence history of approximated solutions in $L^1$-norm}}
        \label{Fig:7}
      \end{figure}
      \begin{figure}[H]
        \includegraphics[width=8cm]{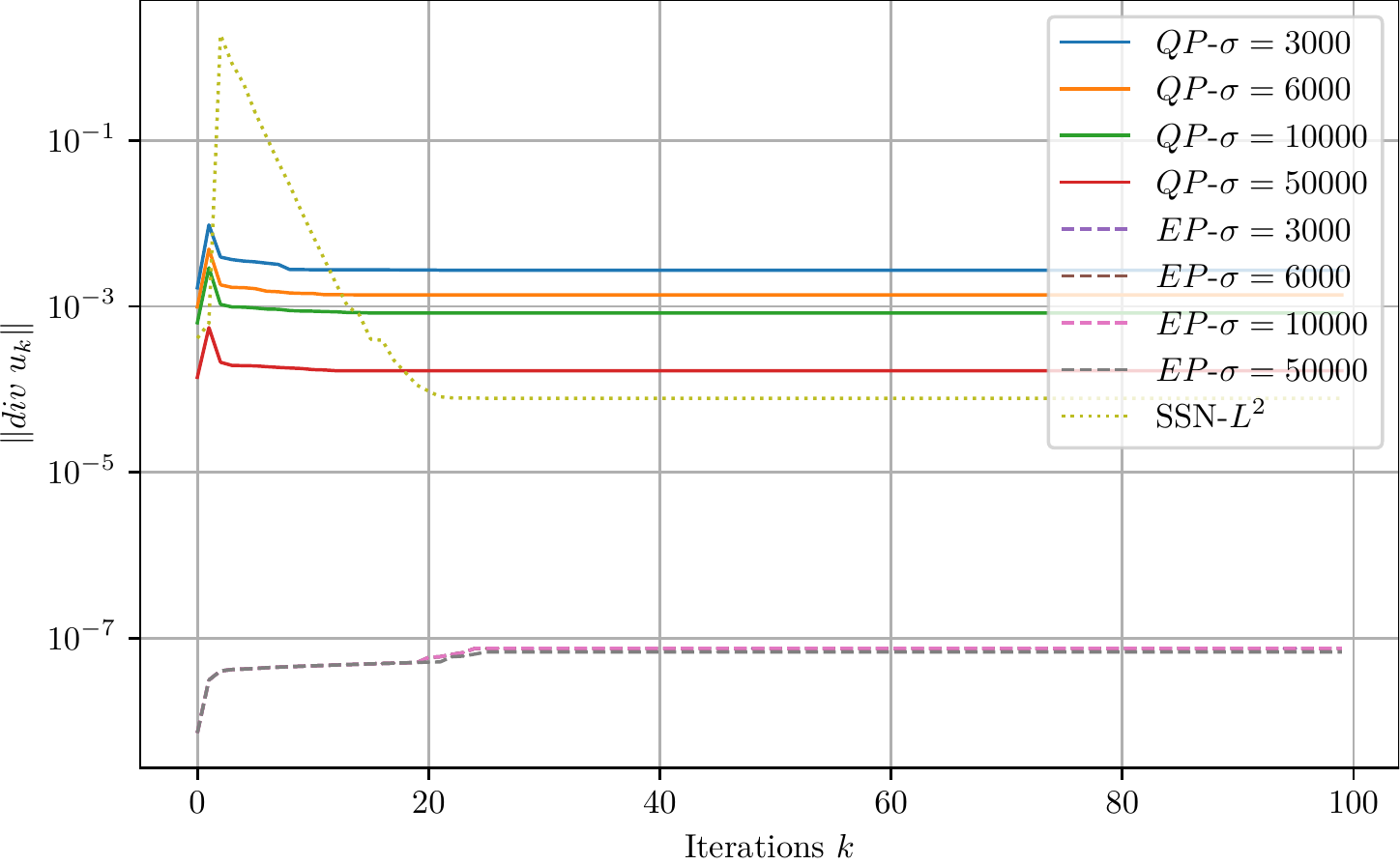}
        \captionof{figure}{\small{Experiment 4: comparison between EP, QP and SSN algorithms of the divergence history of approximated solutions in $L^2$-norm}}
        \label{Fig:7.1}
      \end{figure}
      \begin{figure}[H]
        \includegraphics[width=8cm]{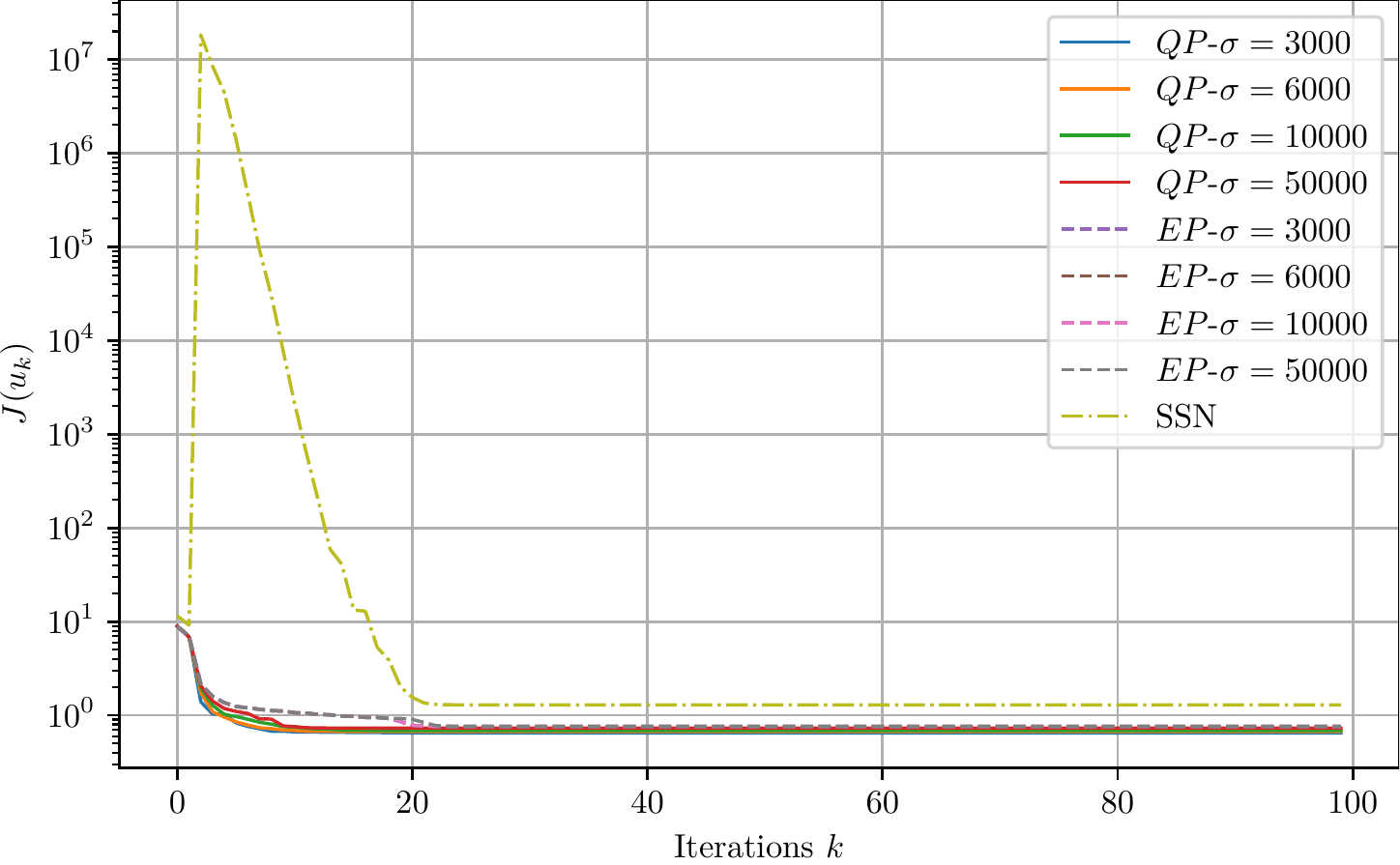}
        \captionof{figure}{\small{Experiment 4: comparison between EP, QP and SSN algorithms of the cost functional $J(\mathbf{u}_k)$}}
        \label{Fig:8}
      \end{figure}

\subsection{Numerical Experiments in 3D Geometries}
In this section we examine the EP-Algorithm in 3D geometries. We take advantage of FEniCS versatility for testing the EP-Algorithm using three-dimensional finite elements described in the Implementation Details, and the FEniCS parallelization capabilities for solving the associated variational problems within the algorithm. We run these programs on a high performance computing system HP ProLiant BL460c Gen8.
\subsubsection{Experiment 5: Cube }
In this experiment we consider a Bingham fluid in a cubic geometry. We assume a laminar flow with constant drop pressure $c$ along the z-axis. The drop in pressure is considered in the periodic boundary conditions of the model (see \cite[Sec. 6.2]{Laaber}). Therefore, the associated minimization problem reads as: 
$$J=\mu \int_\Omega \mathcal{E}\mathbf{u}:\mathcal{E} \mathbf{u} \,dx + \int_\Omega \Psi (\mathcal{E}\mathbf{u})\, dx - \int_\Omega f \mathbf{u}\,dx  - \int_\Gamma c  \mathbf{n}|_{z=0} \mathbf{u}|_{z=0} \,ds + \sigma \|\Div \mathbf{u}\|_{L^1} $$ 
here $\Gamma=[0,1] \times[0,1] \times \{0\}$, $f=0$, $c=10$, $g=0.5\sqrt{2}$, $\beta=1e+ 3 $, $\gamma=1e+7$ and $\sigma=1e+4$. The unit cube is discretized with tetrahedrons with step size $h=1/20$. Figure \ref{Fig:22} shows the velocity field. In the center of the cube the fluid acts like a rigid material as well as the corners of the cube. In Figure \ref{Fig:23} the Frobenius norm $|\mathcal{E} \mathbf{u}|$ and the discretization are depicted in the geometry cut by a parallel plane to the y axis. Here, the plug zones are colored in light blue. The algorithm performed 4 iterations with the following values $J_{\sigma}=-1.098$, $|\langle J'\mathbf{u},\mathbf{w}\rangle|=9.627 e-05$, and $\|\Div \mathbf{u}\|_{L^1}=1.57e-06$.  Taking advantage that FEniCS run in parallel  using MPI and without modifying the algorithm, in Table \ref{table:parallel} we report on the comparison between parallel runtime in several CPUs for EP-Algorithm and SSN. Despite the efficiency of the current implementation deteriorates, the time reduction is significant when more CPUs are added. Also, its execution time escalates better that the execution time in SSN. However, the percentage of time reduction is similar for both methods as more CPUs are incorporated to the computation process.
Because of memory limitations, there was not possible to run the experiment with the mesh size $h=1/20$ in one core. Therefore, we calculate the speedup and the efficiency with the execution time reference in 2 cores. 

\begin{figure}[H]
  \centering
  \begin{minipage}[b]{0.45\textwidth}
    \includegraphics[width=7.5cm]{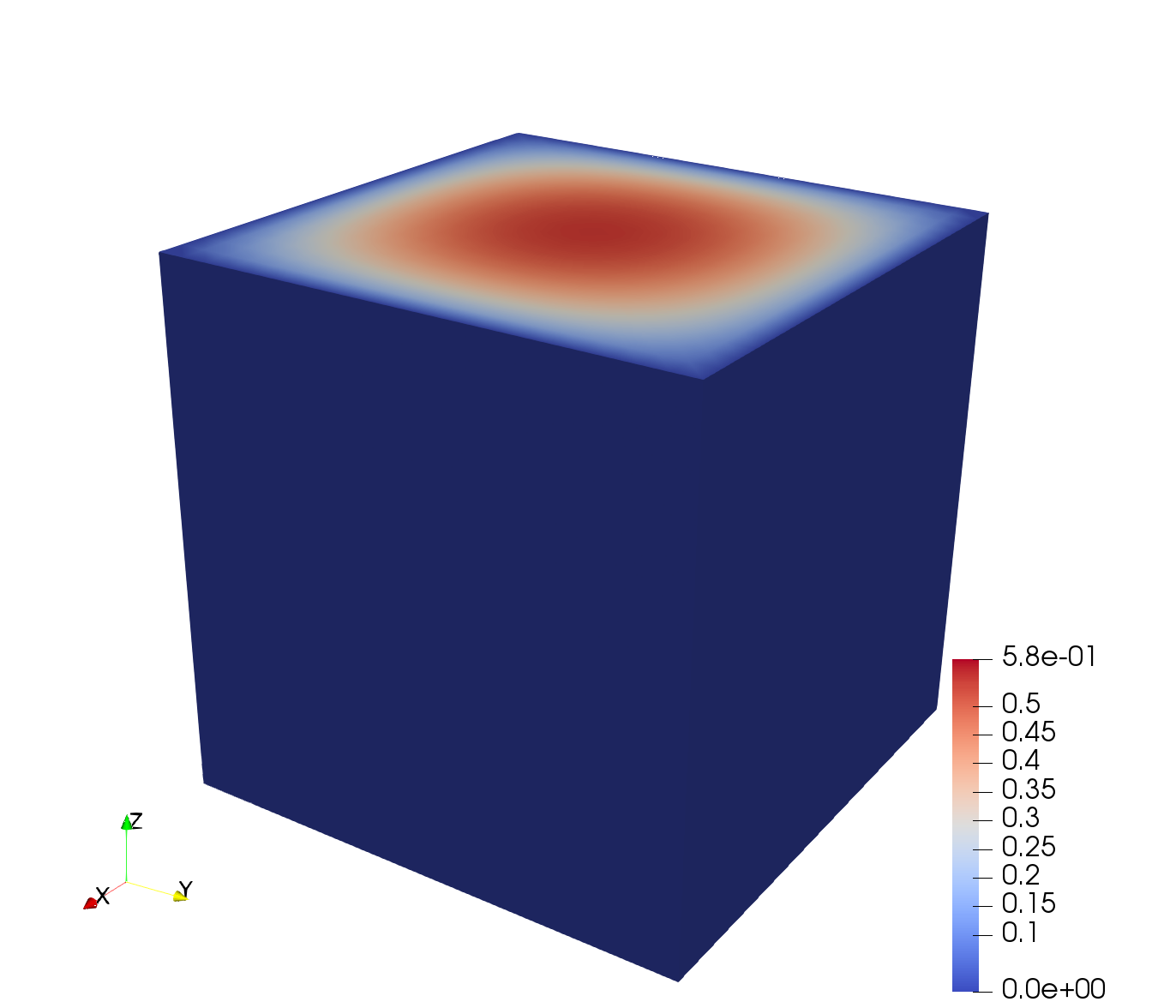}
    \caption{Velocity field}
    \label{Fig:22}
  \end{minipage}
  \begin{minipage}[b]{0.52\textwidth}
    \includegraphics[width=8.0cm]{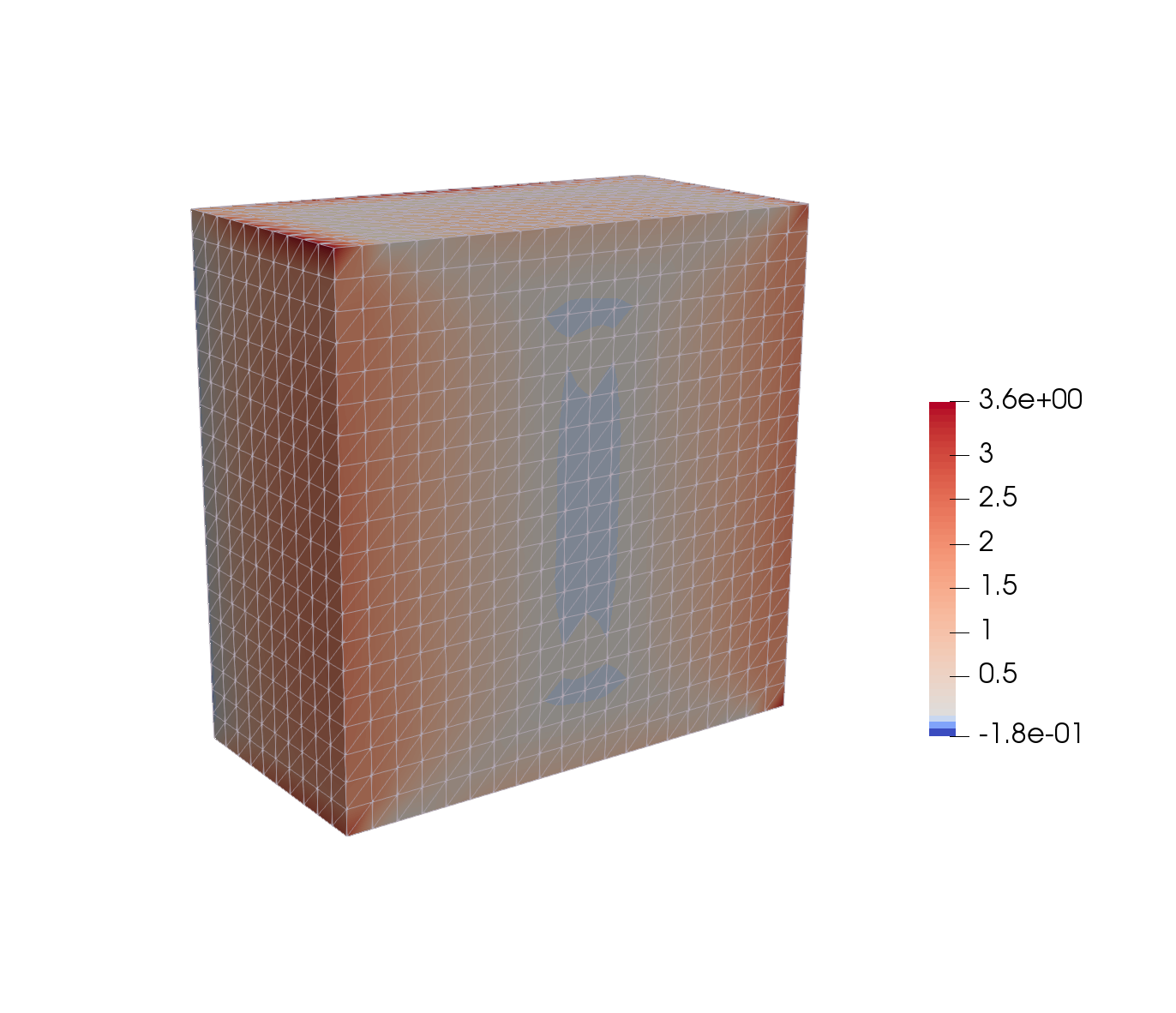}
    \caption{Plug flow }
       \label{Fig:23}
  \end{minipage}
\end{figure}

\begin{table}[H]
  \begin{center}
  \begin{tabular}{c|c |c  c c c@{\extracolsep{1cm}} }
  \toprule
  \multicolumn{6}{c}{\textbf{Parallel run time}}\\ \hline
   &\textbf{No. cores} &2 &6&12&24 \\ \cline{2-6} 
   &Time  (s) &1218.2 &709.8&395.8&240.9  \\ \cline{2-6} 
   EP& \% time reduction &100 \% &58.2\%& 32.4 \%&19.7\%  \\ \cline{2-6} 
  &Speedup &1 & 1.71& 3.07&5.05   \\ \cline{2-6} 
   &Efficiency &1 &0.28 &0.25&0.21 \\ \hline
   &Time  (s) & 5359.87 & 2828.89&2123.41&1619.87 \\ \cline{2-6}
  & \% time reduction &100 \% &52.7\%& 39.6 \%&30.2\%  \\ \cline{2-6}
  SSN&Speedup & 1&1.89  &2.52 &3.30 \\ \cline{2-6}
   &Efficiency &1 & 0.31&0.21&0.13\\ 
   \bottomrule
  \end{tabular}
  \caption{Experiment 5: EP vs. SSN algorithms performance in several cores}
  \label{table:parallel}
  \end{center}
  \end{table}

\subsubsection{Experiment 6: Lid-driven cavity}
 We present a lid-driven  viscoplactic flow inside a unite cube. The geometry is discretized with tetrahedrons with step size $h=1/30$. In this case the body forces are  $f(\textbf{x})= \textbf{0}$, since the motion is given  by a moving lid, i.e., we have:
\begin{equation*}
  \textbf{u}_D(\textbf{x})=
  \begin{cases}
    (1,0,0)^\top & \text{ if } x_3=1\\
    (0,0,0)^\top & \text{ otherwise }
  \end{cases}
\end{equation*}
These boundary conditions add a new constraint to our optimization problem, i.e.
\begin{equation}\label{eq:lid-driv}
  \begin{aligned}
  &\underset{\mathbf{u}\in \mathbf{H}^1(\Omega)}{\min} J(\mathbf{u}):= \frac{\mu}{2}\int_\Omega \mathcal{E}\mathbf{u}:\mathcal{E}\mathbf{u} \,dx + \int_\Omega \Psi (\mathcal{E}\mathbf{u})\, dx - \int_\Omega \mathbf{f}\cdot\mathbf{u}\,dx + \sigma \|\Div(\mathbf{u})\|_{L^1} \\
  & \text{subject to}  \hspace{2mm} \mathbf{u}=\mathbf{u}_D \hspace{2mm} \text{on} \hspace{2mm} \partial \Omega .
  \end{aligned}
  \end{equation}
To cope with this new constraint, let us fix $\mathbf{u_0} \in U=\{\mathbf{u} \in \mathbf{H}^1(\Omega) | \Div(\mathbf{u})=0, \mathbf{u}|_{\partial \Omega} = \mathbf{u}_D\}$. Then, we can characterize the solution $\mathbf{\bar{u}}$ of \eqref{eq:lid-driv} by  $\mathbf{\bar{u}}=\mathbf{u_0} + \mathbf{\hat{u}}$, where $\mathbf{\hat{u}}$ is the solution of
\begin{equation*}\label{eq:lid-driv2}
  \begin{array}{lll}
    \displaystyle \underset{\mathbf{u}\in \mathbf{H}^1_0(\Omega)}{\min} J(\mathbf{u}) &:=&  \displaystyle \frac{\mu}{2}\int_\Omega \mathcal{E}(\mathbf{u}+ \mathbf{u_0}):\mathcal{E}(\mathbf{u}+ \mathbf{u_0})\,dx +  \displaystyle  \int_\Omega \Psi (\mathcal{E}(\mathbf{u}+ \mathbf{u_0}))\, dx \\
    &-& \displaystyle   \int_\Omega \mathbf{f} (\mathbf{u}+ \mathbf{u_0})\,dx + \sigma \|\Div(\mathbf{u})\|_{L^1}.
  \end{array}
  \end{equation*}
The parameters have the following setting: $g=2$, $\beta=1e+ 3$, $\gamma=1e+9$, $\mu=0.5$ and $\sigma=1e+4$. Figure \ref{Fig:14} show the velocity field of the fluid in the cube cut by half in the y axis.  The fluid rotates in the interior of the geometry however, thanks to this rotation the material moves without continuous deformation in the center of the cube. Figure \ref{Fig:16} shows the plug zones in light blue. The numerical performance of the algorithm is displayed in Table \ref{table:11}. The divergence norm, the number of iterations and the stopping criteria behave similar to the 2D case. For instance $\|\Div \mathbf{u}_k\|_{L^1}$ achieves an order of $e-08$ and the stopping criteria $|\langle J'(\mathbf{u}_k),\mathbf{u}_k\rangle|$ drops close to $e-04$.
\begin{figure}[H]
  \centering
  \begin{minipage}[b]{0.45\textwidth}
    \includegraphics[width=8cm]{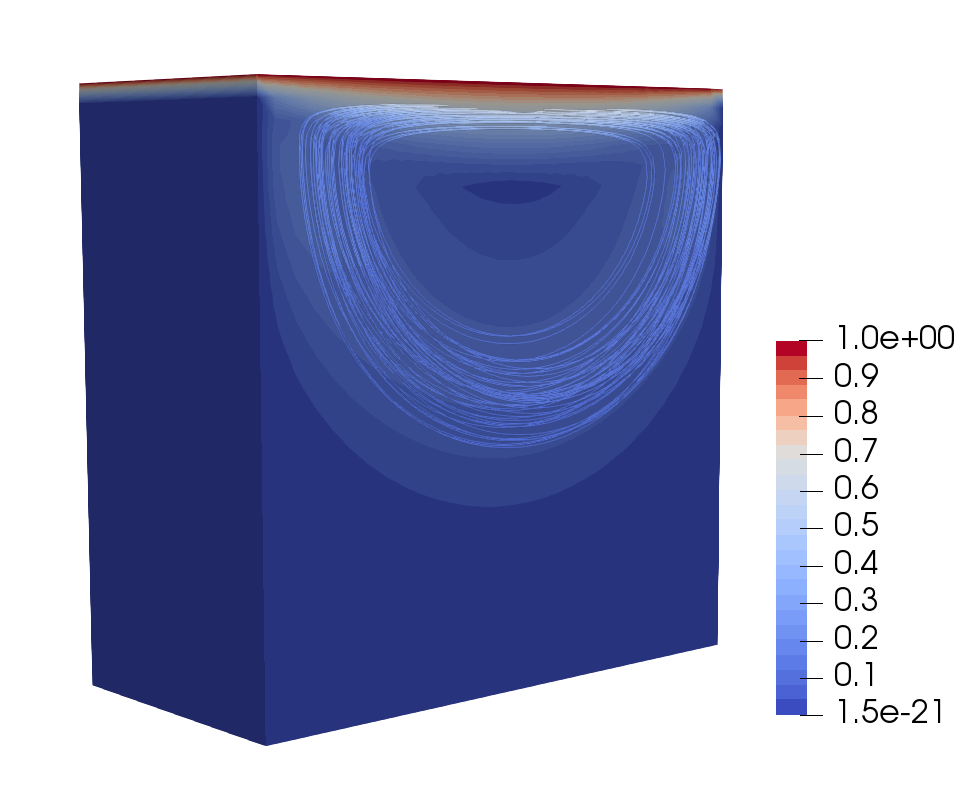}
  \end{minipage}
  \begin{minipage}[b]{0.45\textwidth}
    \includegraphics[width=7.8cm]{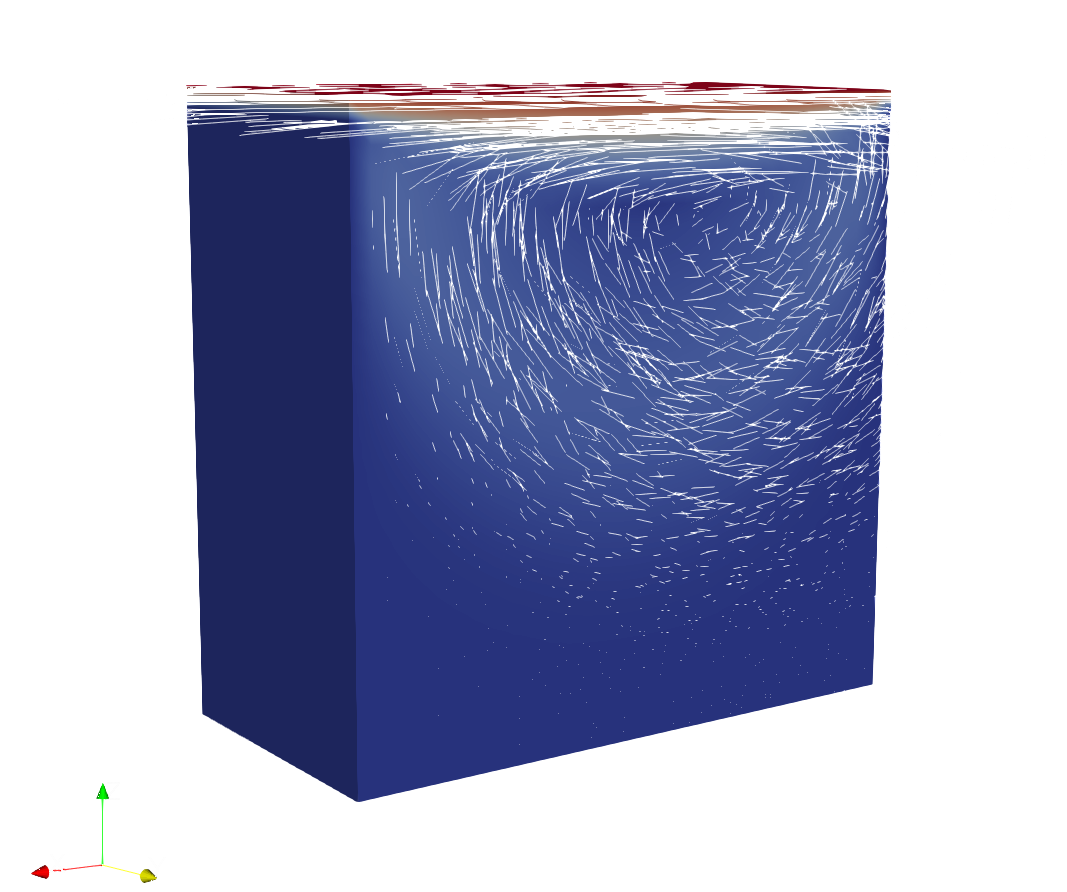}
  \end{minipage}
  \caption{Experiment 6: stream lines and velocity field of lid-driven flow}
  \label{Fig:14}
  \centering
  \begin{minipage}[b]{0.85\textwidth}
    \includegraphics[width=11cm]{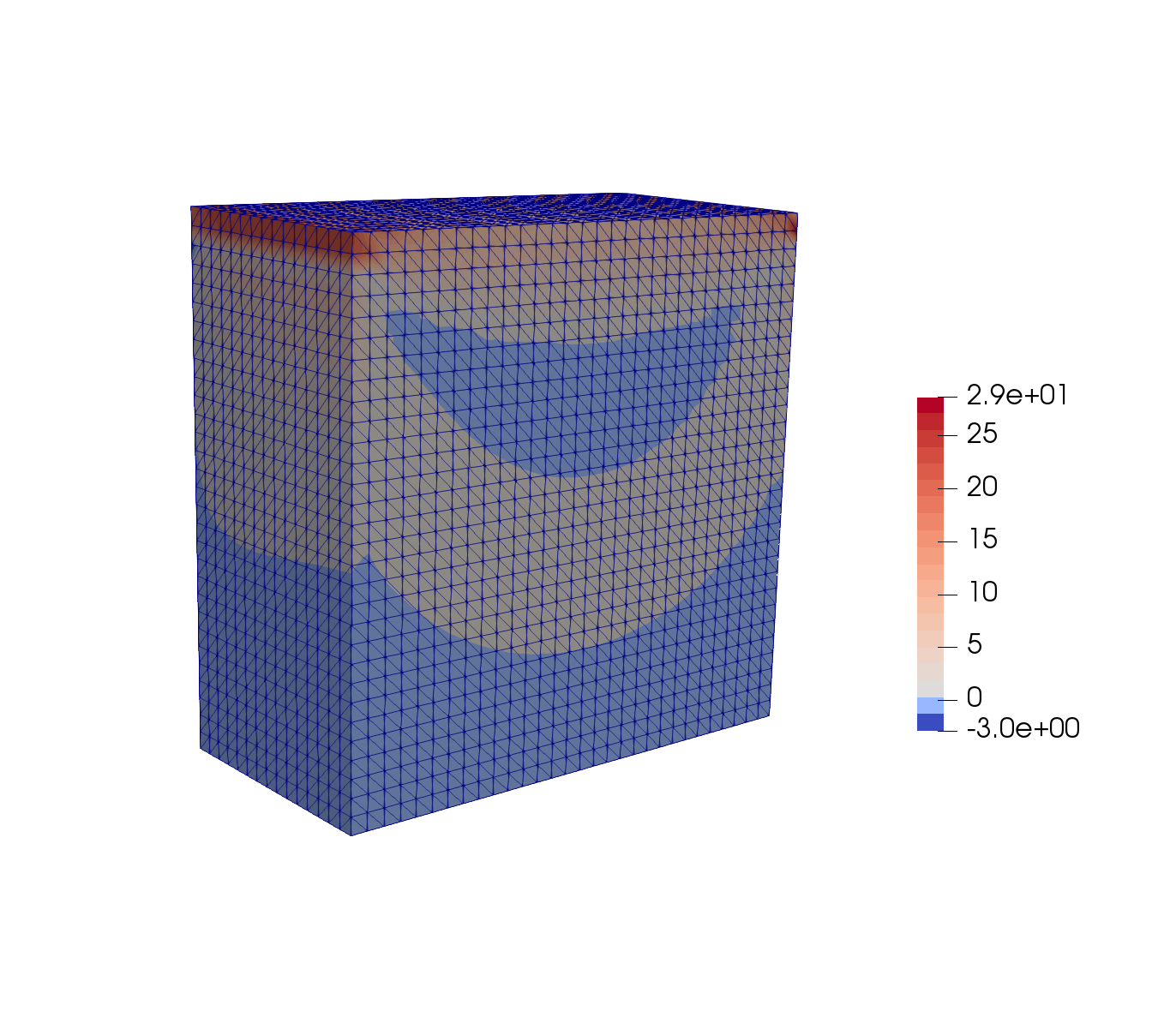}
    \caption{Experiment 6: plug zones of lid-driven flow and spatial mesh}
    \label{Fig:16}
\end{minipage}
\end{figure}
\begin{table}[H]
      \begin{center}
      \begin{tabular}{c c c  c @{\extracolsep{0.5cm}} }
      \hline
      \multicolumn{4}{c}{\textbf{Exact Penalization}}\\ \hline
       $k$ & $\|\Div \mathbf{u}_k\|_{L^1}$&  $J(\mathbf{u}_k)$ &$|\langle J'(\mathbf{u}_k),\mathbf{u}_k\rangle|$ \\ \hline
       1& 1.59e-09& 5.396&0.861 \\
      3& 3.65e-09& 5.179& 0.674\\
      5&  5.72e-09& 5.101& 0.042\\
      7& 1.06e-08&  5.088& 0.024\\
      9& 1.16e-08&  5.087& 0.018\\
      11& 1.36e-08& 5.086& 3.7e-03\\
      13& 1.44e-08& 5.085& 5.1e-04\\
      15& 1.51e-08& 5.085& 1.4e-04\\
      \hline
      \end{tabular}
      \caption{Experimemt 6: 3D Lid-driven cavity with $g=2, \beta=1e+3, \gamma=1e+9$}
      \label{table:11}
      \end{center}
\end{table}

\section{Appendix}
\begin{lema}\label{lem:phi_lips_cont}
Let $\gamma$ and $\sigma$ be two positive  constants. The function $\phi:\mathbb{R}\rightarrow\mathbb{R}$ defined by $\phi(a):=\gamma \sigma \frac{ a}{\max(\sigma, \gamma |a|)}$ is Lipschitz continuous.
\end{lema}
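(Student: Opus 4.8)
The plan is to reduce $\phi$ to an explicit piecewise form by resolving the $\max$ in the denominator, and then to bound its slope. Writing $r:=\sigma/\gamma$, one evaluates the two branches directly: when $|a|\ge r$ one has $\max(\sigma,\gamma|a|)=\gamma|a|$, so $\phi(a)=\gamma\sigma\,a/(\gamma|a|)=\sigma\,\operatorname{sign}(a)$; when $|a|<r$ one has $\max(\sigma,\gamma|a|)=\sigma$, so $\phi(a)=\gamma\sigma\,a/\sigma=\gamma a$. Hence $\phi$ is the piecewise function equal to $\gamma a$ on $(-r,r)$, to $+\sigma$ on $\{a\ge r\}$, and to $-\sigma$ on $\{a\le -r\}$.

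First I would check continuity at the two breakpoints $a=\pm r$: the linear branch gives $\gamma(\pm r)=\pm\sigma$, which agrees with the constant branch, so $\phi$ is continuous on all of $\mathbb{R}$ (in particular there is no issue at $a=0$, where $\phi(0)=0$ lies in the linear regime). Next I would control the derivative: away from $\pm r$ the function $\phi$ is differentiable with $\phi'(a)=\gamma$ on $(-r,r)$ and $\phi'(a)=0$ on $|a|>r$, so $|\phi'(a)|\le\gamma$ wherever it exists. Since $\phi$ is continuous and piecewise continuously differentiable, it is absolutely continuous, and for any $a_1,a_2$ the fundamental theorem of calculus gives
\[
|\phi(a_1)-\phi(a_2)|=\left|\int_{a_2}^{a_1}\phi'(t)\,dt\right|\le \gamma\,|a_1-a_2|,
\]
which is the desired Lipschitz estimate, with constant $\gamma$.

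Since the statement is elementary, there is no serious obstacle; the only mild care is required at the non-differentiability points $a=\pm r$. One can dispense with the measure-theoretic argument entirely and instead verify the inequality by a short case analysis over the positions of $a_1$ and $a_2$ relative to $[-r,r]$, using the continuity established above to glue the affine and constant pieces; both routes yield the same constant $\gamma$. I would also note in passing that $\phi$ coincides with the derivative of the Huber regularization $|\cdot|_\gamma$ introduced earlier, which explains a priori its globally $\gamma$-Lipschitz behavior.
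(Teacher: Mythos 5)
Your proof is correct, and it takes a genuinely different route from the paper's. You resolve the $\max$ explicitly into the two regimes $|a|<\sigma/\gamma$ and $|a|\ge\sigma/\gamma$, observe that $\phi$ is the continuous gluing of the linear piece $\gamma a$ with the constant pieces $\pm\sigma$, and conclude via the derivative bound $|\phi'|\le\gamma$ and the fundamental theorem of calculus (or an equivalent case analysis). The paper instead keeps the $\max$ unresolved: it writes $\phi(a)=\gamma\sigma\,a/\phi_m(a)$ with $\phi_m(a)=\max(\sigma,\gamma|a|)$, proves $\phi_m$ is Lipschitz, and then runs an add--and--subtract quotient estimate using the bounds $1/\phi_m(a)\le 1/\sigma$ and $|a|/\phi_m(a)\le 1/\gamma$, arriving at the constant $\gamma(L_{max}+1)$. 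Your argument buys a sharper (indeed optimal) Lipschitz constant $\gamma$ in place of the paper's $\gamma(L_{max}+1)$, and your closing observation that $\phi$ is exactly the derivative of the Huber regularization $|\cdot|_\gamma$ is a nice conceptual explanation that the paper does not make explicit. What the paper's quotient argument buys is robustness under generalization: the subsequent Remark extends the lemma to the vector-valued functions $\phi_j(a)=\gamma\sigma\,a_j/\max(\sigma,\gamma|a|)$ on $\mathbb{R}^m$, where your piecewise picture degrades --- outside the ball $\{|a|<\sigma/\gamma\}$ the function $\sigma a_j/|a|$ is no longer constant, so one would have to bound its gradient there, whereas the paper's algebraic estimate transfers essentially verbatim. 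Note also that the statement only claims Lipschitz continuity, so your omission of the semismoothness discussion (which the paper's proof appends at the end) is not a gap.
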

\begin{proof}
Let us start by rewriting $\phi(a)$ as $\phi(a)=\gamma\sigma\frac{a}{\phi_m(a)}$, with $\phi_m(a):=\max(\sigma,\gamma |a|)$. Next, we notice that the max function is globally Lipschitz continuous with constant $L_{max}$. This fact implies that
\begin{equation}\label{maxlip}
\begin{array}{lll}
|\phi_m(a_1)-\phi_m(a_2)|= |\max(\sigma,\gamma |a_1|)- \max(\sigma,\gamma |a_2|)|\vspace{0.2cm}\\\hspace{3.5cm}\leq \gamma L_{max}||a_1|-|a_2||\leq  \gamma L_{max}|a_1-a_2|,\,\, \forall a_1,a_2\in \mathbb{R}.
\end{array}
\end{equation}
We conclude that $\phi_m$ is Lipschitz continuous. Considering this result, we have that
\[
\begin{array}{lll}
|\phi(a_1)-\phi(a_2)| &=& \left|\gamma \sigma \frac{a_1}{\phi_m(a_1)} -\gamma \sigma \frac{a_2}{\phi_m(a_2)}\right|\vspace{0.2cm}\\&=& \gamma\sigma \left|\frac{a_1}{\phi_m(a_1)}-\frac{a_2}{\phi_m(a_2)} +\frac{a_1}{\phi_m(a_2)}- \frac{a_1}{\phi_m(a_2)}\right|\vspace{0.2cm}\\&=& \gamma\sigma \left|a_1\left(\frac{\phi_m(a_2)- \phi_m(a_1)}{\phi_m(a_1)\phi_m(a_2)}\right)\right| +\gamma\sigma\left|\frac{1}{\phi_m(a_2)} (a_1 - a_2)\right|.
\end{array}
\]
Now, it is clear that $0<\sigma \leq \phi_m(a_2)$, which implies that $\frac{1}{\phi_m(a_2)}\leq\frac{1}{\sigma}$. By plugging this inequality in the expression above, we have that
\[ 
\begin{array}{lll}
|\phi(a_1)-\phi(a_2)| &\leq& \gamma\sigma \left|\frac{a_1}{\phi_m(a_1)}\left(\frac{\phi_m(a_2)- \phi_m(a_1)}{\sigma}\right)\right| +\gamma|a_1 - a_2|\vspace{0.2cm}\\ &=& \gamma \left|\frac{a_1}{\phi_m(a_1)}\right|\left|\phi_m(a_2)- \phi_m(a_1)\right|+ \gamma|a_1-a_2|.
\end{array}
\]
Finally, sice $\left|\frac{a_1}{\phi_m(a_1)}\right|\leq\frac{1}{\gamma}$, we conclude, thanks to \eqref{maxlip}, that
\[
|\phi(a_1)-\phi(a_2)| \leq \gamma(L_{max} +1)|a_1-a_2|.
\]

Regarding the semismoothness of $\phi$, note that the absolute value $|\cdot |: \mathbb{R} \rightarrow \mathbb{R}$ and the function $\max(0, \cdot) : \mathbb{R} \rightarrow \mathbb{R}$ are  both semismooth (see \cite[Sec. 2.5]{Ulbrich} and \cite[Lemma 3.1]{Kunisch-Ito} respectively). Then, since the composition of semismooth functions in $\mathbb{R}^n$ is a semismooth function \cite[Prop. 2.9]{Ulbrich}, it follows that $\phi(a)$ is semismooth. 
\end{proof}
\begin{remark}
  The function $\phi_j:\mathbb{R}^m \rightarrow\mathbb{R}$ defined by $\phi_j(a):=\gamma \sigma \frac{ a_j}{\max(\sigma, \gamma |a|)}$ is also Lipschitz continuous and semismooth. The proof of this assertion is analogous to the one given in Lemma \ref{lem:phi_lips_cont}.
\end{remark}
\begin{lema}
  Let $\phi(\Div \mathbf{u}(x))=\displaystyle \sigma \gamma \frac{  \Div \mathbf{u}(x)}{\max(\sigma, \gamma |\Div \mathbf{u}(x)|)}$ with $\gamma$ and $\sigma$ positive  constants.
  A measurable selection $M_{\phi}( \mathbf{u})$ of Clarke's generalized Jacobian $ \partial \phi(\Div \mathbf{u}) $ is :
\begin{equation}
  M_{\phi}( \mathbf{u})=\begin{cases}
\displaystyle\sigma \frac{1}{|\Div\mathbf{u}|} \,  - \sigma\displaystyle   \frac{(\Div\mathbf{u} \Div\mathbf{u}) }{|\Div\mathbf{u}|^3}, & if   \,\, \gamma  |\Div\mathbf{u}(x)| \geq \sigma\vspace{0.2cm} \\ \gamma, & if \,\, \gamma  |\Div\mathbf{u}(x)| < \sigma.
\end{cases}
 \end{equation}
\end{lema}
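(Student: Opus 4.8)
The plan is to exploit the piecewise-smooth structure of $\phi$ together with the fact that, for a locally Lipschitz scalar function, Clarke's generalized Jacobian coincides with the generalized gradient, i.e.\ the convex hull of all limits of classical derivatives taken along points of differentiability. Since the preceding Lemma \ref{lem:phi_lips_cont} already establishes that $\phi(a)=\sigma\gamma\,a/\max(\sigma,\gamma|a|)$ is (globally) Lipschitz continuous, $\partial\phi(a)$ is well defined and nonempty for every $a\in\mathbb{R}$, and it suffices to exhibit one element of it, depending measurably on $x$ through $a=\Div\mathbf{u}(x)$.

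First I would split $\mathbb{R}$ into the two open regions determined by the active set, namely $\{\gamma|a|<\sigma\}$ and $\{\gamma|a|>\sigma\}$, on which the $\max$ selects a single branch and $\phi$ is consequently continuously differentiable. On the inner region $\gamma|a|<\sigma$ one has $\max(\sigma,\gamma|a|)=\sigma$, so $\phi(a)=\gamma a$ and hence $\phi'(a)=\gamma$, which is precisely the second branch of $M_\phi$. On the outer region $\gamma|a|>\sigma$ one has $\max(\sigma,\gamma|a|)=\gamma|a|$, so $\phi(a)=\sigma\,a/|a|=\sigma\,\operatorname{sgn}(a)$ is locally constant and $\phi'(a)=0$; here I would verify that the vectorised expression $\sigma/|a|-\sigma\,(a\,a)/|a|^3$ appearing in the first branch of $M_\phi$ indeed reduces to $0$, since $a\,a=a^2=|a|^2$ gives $\sigma/|a|-\sigma|a|^2/|a|^3=0$. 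This is the scalar degeneration of the chain-rule derivative of $a\mapsto a/|a|$, written in the same algebraic form used for the operator $\Phi$ in Lemma \ref{prop:semismooth_h}.

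It then remains to treat the kink set $\{\gamma|a|=\sigma\}=\{a=\pm\sigma/\gamma\}$ and to assemble a measurable selection. On this set the one-sided limits of $\phi'$ are $\gamma$ (from inside) and $0$ (from outside), so the generalized gradient there is the interval $\partial\phi(\pm\sigma/\gamma)=\operatorname{conv}\{0,\gamma\}=[0,\gamma]$; the value $0$ assigned by the first branch of $M_\phi$ (written with $\geq$, hence containing the boundary) is therefore a legitimate element of $\partial\phi$. Finally I would check measurability: the set $\{x:\gamma|\Div\mathbf{u}(x)|\geq\sigma\}$ and its complement are measurable because $\Div\mathbf{u}\in L^2_0(\Omega)$ is measurable, and on each of them $M_\phi(\mathbf{u})$ is the composition of $\Div\mathbf{u}$ with a continuous expression, hence measurable; combined with the pointwise inclusion $M_\phi(\mathbf{u}(x))\in\partial\phi(\Div\mathbf{u}(x))$ this shows that $M_\phi$ is a measurable selection, as claimed.

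The main obstacle — in fact the only subtlety — is recognising that, unlike the genuinely matrix-valued situation of Lemma \ref{prop:semismooth_J}, the first branch collapses to $0$ in the scalar divergence setting, and then justifying that selecting $0$ (rather than $\gamma$ or an intermediate value) on the measure-zero kink set is a valid measurable choice from $[0,\gamma]$; no delicate estimate is required beyond the elementary computation of $\phi'$ on the two open regions.
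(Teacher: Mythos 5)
Your proof is correct, but it takes a genuinely different route from the paper's. You argue directly on the scalar level: on the two open regions $\{\gamma|a|<\sigma\}$ and $\{\gamma|a|>\sigma\}$ the function $\phi$ is smooth (indeed affine, respectively locally constant), so its Clarke subdifferential is the singleton of its classical derivative, and at the kink points $a=\pm\sigma/\gamma$ you invoke Clarke's gradient formula to get $\partial\phi(\pm\sigma/\gamma)=\mathrm{conv}\{0,\gamma\}=[0,\gamma]$, into which the chosen value falls; measurability then follows from the measurability of $\Div\mathbf{u}$. The paper instead proceeds structurally: it writes $\max(\sigma,\gamma|y|)=\max(0,\gamma|y|-\sigma)+\sigma$ as a composition $\phi_1\circ\phi_2$, takes the known measurable selection of $\partial\max(0,\cdot)$ from the primal--dual active set literature and the subdifferential of $|\cdot|$ away from the origin, and then applies the chain rule and a quotient rule for Clarke generalized Jacobians to assemble $M_\phi$. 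Your approach is more elementary and self-contained, and it adds two things the paper leaves implicit: the observation that the first branch $\sigma/|a|-\sigma a^2/|a|^3$ collapses identically to $0$ in this scalar setting, and an explicit justification of the value assigned on the measure-zero set $\{\gamma|a|=\sigma\}$ (the paper's proof actually produces $\gamma$ on that boundary while its statement assigns the first branch there; your argument shows both choices are legitimate selections from $[0,\gamma]$, so the discrepancy is harmless). What the paper's route buys in exchange is uniformity: its chain-rule computation is written in exactly the algebraic form that carries over verbatim to the vector-valued functions $\varphi_j$ on $\mathbb{R}^m$ used for the operator $\Theta$, where your locally-constant shortcut is no longer available.
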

\begin{proof}
  
Let $\phi_3=\phi_1 \circ \phi_2$, where $\phi_1(z)= \max(0,z) + \sigma $ and $\phi_2(y)= \gamma |y| - \sigma $. Then the following identity holds:
\[
  \phi_3(y)=\max(\sigma, \gamma |y|)= \max(0, \gamma | y| -\sigma) + \sigma.
\]
From \cite[pp. 869]{Kunisch-Ito} we have that $ M_{\phi_1} \in \partial \phi_1(\gamma |y| - \sigma)$  given by
\begin{equation*}
  M_{\phi_1}( \gamma|y| - \sigma )=\begin{cases}
    1 , & if   \,\, \gamma|y| - \sigma  >0\\
    0,  & if   \,\, \gamma|y| - \sigma \leq 0,
  \end{cases}
\end{equation*}
is a measurable selection of $\partial \phi_1(\gamma |y| - \sigma)$. Next, since $\phi_2$ involves the function $| \cdot |$ evaluated at $y\neq 0$. From \cite[Exaple 2.5.1]{Ulbrich} we have that 
\[
  M_{\phi_2}(y)\in \partial \phi_2(y)=\displaystyle\bigg\{
\frac{\gamma y }{|y|}\bigg\} \, \text{for } y \neq 0 .
\]
Moreover, the chain rule for Clarke's generalized Jacobian \cite[Prop. 2.3]{Ulbrich} yields that:
\[M_{\phi_3}(y) v  \in \partial \phi_3(y) v \subset co\{ M_{\phi_1} M_{\phi_2} v: M_{\phi_1}  \in \partial \phi_1( \phi_2(y)) , M_{\phi_2} \in \partial \phi_2(y)\}.
\]
Thus, since $y \neq 0$, 
\begin{equation}\label{eq:app1}
  M_{\phi_3}(y)=\begin{cases}
    \frac{\gamma y  }{|y|} , & if   \,\, \gamma|y| - \sigma  >0\\
    0,  & if   \,\, \gamma|y| - \sigma \leq 0,
  \end{cases}
\end{equation}
Clearly, $\phi(y)= \sigma \gamma \frac{y}{ \phi_3(y)}$. Then, from the composition of functions we obtain that 
\[
  M_{\phi}(y) \in \partial \phi(y)  \subseteq \sigma \gamma \frac{\phi_3(y) \cdot 1 - y \partial \phi_3(y)\,\, }{\phi_3(y)^2}. 
\]
Then, from \eqref{eq:app1} the following cases can occur:
\begin{itemize}
  \item $\gamma | y| > \sigma$. Here we have that:
  \[
    M_{\phi}(y) =  \displaystyle\sigma \frac{1}{|y|} \,  - \sigma\displaystyle   \frac{y^2}{|y|^3}.
  \]
  \item $\gamma | y| \leq \sigma$ yields that:
  \[
    M_{\phi}(y) =  \gamma.
  \]
\end{itemize}
Finally, by taking $y=\Div \mathbf{u}$ we have the desired result.
\end{proof}


\begin{thebibliography}{99}
\bibitem{Adams}
{\sc R. Adams and J. F. Fournier}, {Sobolev Spaces}, Academic Press, The Netherlands, 2003.

\bibitem{Aposporidis}
{\sc A. Aposporidis et al.}, {\em A mixed formulation of the Bingham fluid flow problem: Analysis and numerical solution}, Computer methods in applied mechanics and engineering, 200 (2011) 2434-2446.

\bibitem{Uzawa}
{\sc K. J. Arrow, H. Azawa, L. Hurwicz and H. Uzawa}, {Studies in linear and non-linear programming}, Stanford University Press, 1953.

\bibitem{Babusca}
{\sc I. Babu\v{s}ca}, {\em Error-bounds for finite element method}, Numerische Mathematik, 16 (1971) 322–333.

\bibitem{Bercovier}
{\sc M. Bercovier and M. Engelman}, {\em A finite element method for incompressible non-Newtonian flows}, J. Comput. Phys., 36 (1980) 313–326.

\bibitem{Bonnans}
{\sc J. F. Bonnans, J. C. Gilbert, C. Lemar\'echal and C. A. Sagastiz\'abal}, {Numerical optimization: theoretical and practical aspects}, Springer Science and Business Media, New York, 2006.

\bibitem{Ciarlet}
{\sc P. G. Ciarlet}, {Linear and Nonlinear Functional Analysis with Applications}, SIAM, U.S.A., 2013.

\bibitem{Clarke}
{\sc F. Clarke}, {Functional Analysis, Calculus of Variations and Optimal Control}, Springer Verlag, London, 2013.

\bibitem{Combettes}
{\sc H. H. Bauschke and P. L. Combettes}, {Convex analysis and monotone operator theory in Hilbert spaces}, Springer, New York, 2011.

\bibitem{DlR}
{\sc J.C. De los Reyes, E. Loayza and P. Merino }, {\em Second-order orthant-based methods with enriched Hessian information for sparse $\l _1 $-optimization}, Computational Optimization and Applications, 67 (2017) 225--258.

\bibitem{DlRG1}
{\sc J. C. De los Reyes and S. Gonz\'alez-Andrade}, {\em Path following methods for steady laminar Bingham flow in cylindrical pipes}, ESAIM Math. Model. Numer. Anal., 43 (2009) 81--117.

\bibitem{DlRG2}
{\sc J. C. De los Reyes and S. Gonz\'alez-Andrade}, {\em Numerical Simulation of Two-Dimensional Bingham Fluid Flow by Semismooth Newton Methods}, Journal of Computational and Applied Mathematics, 235 (2010) 11--32.

\bibitem{DlRG3}
{\sc J. C. De los Reyes and S. Gonz\'alez-Andrade}, {\em A combined BDF-semismooth Newton approach for time-dependent Bingham flow}, Numerical Methods for Partial Differential Equations, 28 (2012) 834--860.

\bibitem{Dennis}
{\sc J.E. Dennis and R.B. Schnabel }, {Numerical methods for unconstrained optimization and nonlinear equations}, Society for Industrial and Applied Mathematics, 1996.

\bibitem{Ekeland}
{\sc I. Ekeland and R. T\'emam }, {Convex Analysis and Variational Problems}, SIAM, Philadelphia, 1999.

\bibitem{Gabay}
{\sc D. Gabay and B. Mercier}, {\em A dual algorithm for the solution of nonlinear variational problems via finite element approximation}, Computers and mathematics with applications, 2 (1976) 17–-40.

\bibitem{Girault}
{\sc V. Girault and P. A. Raviart}, {Finite element methods for Navier-Stokes equations: theory and algorithms}, Springer Science and Business Media, 2012.

\bibitem{Giaquinta}
{\sc M. Giaquinta and G. Modica}, {Mathematical analysis: An introduction to functions of several variables}, Birkh\"auser, Boston, 2010.

\bibitem{Glowinski}
{\sc R. Glowinski}, {On alternating direction methods of multipliers: a historical perspective. In Modeling, Simulation and Optimization for Science and Technology}, Springer, (2014) 59–-82.

\bibitem{Gonzalez-Andrade1}
{\sc S.  Gonz\'alez-Andrade}, {\em Semismooth Newton and path-following methods for the numerical simulation of Bingham fluids}, PhD thesis, EPN Quito, 2008.

\bibitem{Gonzalez-Andrade2}
{\sc S.  Gonz\'alez-Andrade}, {\em A preconditioned descent algorithm for variational inequalities of the second kind involving the p-Laplacian operator}, Computational Optimization and Applications, 66 (2017) 123--162

\bibitem{Gonzalez-Andrade3}
{\sc J. C. De los Reyes and S. Gonz\'alez-Andrade}, {\em Path following methods for steady laminar Bingham flow in cylindrical pipes}, ESAIM: Mathematical Modelling and Numerical Analysis, 43 (2009) 81--117

\bibitem{GALO}
{\sc S.  Gonz\'alez-Andrade and S. L\'opez-Ord\'o\~nez}, {\em A multigrid optimization algorithm for the numerical solution of quasilinear variational inequalities involving the p-Laplacian}, Computers and Mathematics with Applications, 75 (2018) 1107--1127

\bibitem{Hinze}
{\sc M. Hinze, R. Pinnau, M. Ulbrich and S. Ulbrich}, {Optimization with PDE constraints }, Springer, Secaucus, 2009.

\bibitem{Huilgol1}
{\sc R. R. Huilgol and Q. D. Nguyen}, {\em Variational principles and variational inequalities for the unsteady flows of a yield stress fluid}, International Journal of Non-Linear Mechanics, 36 (2001) 49--67.

\bibitem{Huilgol2}
{\sc R. R. Huilgol and Z. You}, {\em Application of the augmented Lagrangian method to steady pipe flows of Bingham, Casson and Herschel–Bulkley fluids}, Journal of Non-Newtonian Fluid Mechanics, 128 (2005) 126--143.

\bibitem{Kelly}
{\sc C. T. Kelly}, {Iterative Methods for Linear and Nonlinear Equations}, Philadelphia: SIAM, 1995.

\bibitem{Kikuchi}
{\sc N. Kikuchi and J. T. Oden. }, {Contact problems in elasticity: a study of variational inequalities and finite element methods}, Studies in Applied Mathematics, SIAM, U.S.A., 1988.

\bibitem{Kunisch-Ito}
{\sc M. Hintermüller, K. Ito and K. Kunisch }, {\em The primal-dual active set strategy as a semismooth Newton method} SIAM Journal on Optimization, 13 (2002) 865--888.

\bibitem{Laaber}
{\sc P. Laaber}, {\em Numerical simulation of a three-dimensional Bingham fluid flow}, 2008

\bibitem{Lions}
{\sc J.L. Lions}, {Optimal Control of Systems Governed by Partial Differential Equations}, Springer - Verlag, Germany, 1971.

\bibitem{Merino}
{\sc  J. C. De los Reyes and P. Merino}, {\em The Second Order Method with Enriched Hessian Information for Imaging Composite Sparse Optimization Problems, preprint 2020.}

\bibitem{Peypo}
{\sc J. Peypouquet}, {Convex optimization in normed spaces: theory, methods and examples}, Springer, London, 2015.

\bibitem{Qi}
{\sc X. Chen, Z. Nashed and L. Qi }, {\em Smoothing methods and semismooth methods for nondifferentiable operator equations}, SIAM Journal on Numerical Analysis, 38 (2001) 1200--1216.

\bibitem{Scholtes}
{\sc S. Scholtes}, {Introduction to piecewise differentiable equations}, Springer, London, 2012.

\bibitem{Stadler}
{\sc G. Stadler}, {\em Elliptic optimal control problems with L 1-control cost and applications for the placement of control devices}, Computational Optimization and Applications, 44 (2009) 159.

\bibitem{Temam}
{\sc R. Temam}, {Navier-Stokes Equations. Theory and Numerical Analysis}, AMS Chelsea Publishing, U.S.A., 2001.

\bibitem{Treskatis}
{\sc T. Treskatis, M. Moyers-Gonz\'alez and C.J. Price}, {\em An accelerated dual proximal gradient method for applications in viscoplasticity}, Journal of Non-Newtonian fluid mechanics, 238  (2016) 115--130.

\bibitem{Ulbrich}
{\sc M. Ulbrich}, {Semismooth Newton methods for variational inequalities and constrained optimization problems in function spaces}, SIAM, Philadelphia, 2011.

\bibitem{Wilbrandt}
{\sc U. Wilbrandt }, {Stokes - Darcy Equations Analytic and Numerical Analysis}, Birkh\"auser, Cham, 2019.

\bibitem{Zowe}
{\sc J. Zowe and S. Kurcyusz}, {\em Regularity and Stability for the Mathematical Programming Problem in Banach Spaces}, Appl. Math. Optim., 5 (1979) 49--62 .
\end{thebibliography}
\end{document}